\DeclareMathOperator*{\argmax}{arg\,max}
\DeclareMathOperator*{\argmin}{arg\,min}
\newcommand{\eps}{\varepsilon}
\newcommand{\wt}{\widetilde}
\newcommand{\ReLU}{\operatorname{ReLU}}
\newcommand{\Var}{\operatorname{Var}}
\newcommand{\E}{\operatorname{E}}
\newcommand{\Q}{\operatorname{Q}}
\renewcommand{\P}{\operatorname{P}}
\newcommand{\supp}{\operatorname{supp}}
\newcommand{\Lip}{\operatorname{Lip}}
\newcommand{\wh}{\widehat}
\newcommand{\ul}{\underline}
\newcommand{\ol}{\overline}
\newcommand{\KL}{\operatorname{KL}}
\newcommand{\mB}{\mathcal{B}}
\newcommand{\mD}{\mathcal{D}}
\newcommand{\mE}{\mathcal{E}}
\newcommand{\mF}{\mathcal{F}}
\newcommand{\mG}{\mathcal{G}}
\newcommand{\mM}{\mathcal{M}}
\newcommand{\mN}{\mathcal{N}}
\newcommand{\mP}{\mathcal{P}}
\newcommand{\mS}{\mathcal{S}}
\newcommand{\sY}{\mathrm{Y}}
\newcommand{\sX}{\mathrm{X}}
\newcommand{\SP}{\, \P}
\newcommand{\SQ}{\, \Q}
\newcommand{\sn}{\, n}
\newcommand{\sm}{\, m}
\newcommand{\RR}{\mathbb{R}}
\newcommand{\dd }{\mathrm{d}}
\newtheorem{theorem}{Theorem}
\newtheorem{lemma}[theorem]{Lemma}
\newtheorem{corollary}[theorem]{Corollary}
\newtheorem{definition}[theorem]{Definition}
\theoremstyle{definition}
\newtheorem{example}{Example}
\newtheorem{remark}{Remark}
\begin{document}

\begin{frontmatter}

\title{Local convergence rates of the nonparametric least squares estimator with applications to transfer learning}
\runtitle{Local Convergence Rates}

\begin{aug}

\author[A]{\fnms{Johannes}~\snm{Schmidt-Hieber}\ead[label=e1]{a.j.schmidt-hieber@utwente.nl}}
\author[A]{\fnms{Petr}~\snm{Zamolodtchikov}\ead[label=e2]{p.zamolodtchikov@utwente.nl}}

\address[A]{Department of Applied Mathematics,
University of Twente\printead[presep={,\ }]{e1,e2}}

\end{aug}

\begin{abstract} Convergence properties of empirical risk minimizers can be conveniently expressed in terms of the associated population risk. To derive bounds for the performance of the estimator under covariate shift, however, pointwise convergence rates are required. Under weak assumptions on the design distribution, it is shown that least squares estimators (LSE) over $1$-Lipschitz functions are also minimax rate optimal with respect to a weighted uniform norm, where the weighting accounts in a natural way for the non-uniformity of the design distribution. This implies that although least squares is a global criterion, the LSE adapts locally to the size of the design density. We develop a new indirect proof technique that establishes the local convergence behavior based on a carefully chosen local perturbation of the LSE. The obtained local rates are then applied to analyze the LSE for transfer learning under covariate shift.
\end{abstract}

\begin{keyword}[class=MSC]
\kwd[Primary ]{62G08}
\kwd[; secondary ]{62G20}
\end{keyword}

\begin{keyword}
\kwd{Covariate shift}
\kwd{domain adaptation}
\kwd{local rates}
\kwd{mean squared error}
\kwd{minimax estimation}
\kwd{nonparametric least squares}
\kwd{nonparametric regression}
\kwd{transfer learning}

\end{keyword}

\end{frontmatter}

\section{Introduction}
\label{sec.intro}

Consider the nonparametric regression model with random design supported on $[0,1],$ that is, we observe $n$ i.i.d.\ pairs $(X_1,Y_1), \dots, (X_n,Y_n) \in [0,1]\times \mathbb{R},$ with 
\begin{align}
    Y_i = f_0(X_i)+ \eps_i, \quad i=1,\dots,n
    \label{eq.mod}
\end{align}
and independent measurement noise variables $\eps_1,\dots,\eps_n \sim \mN(0,1)$. The design distribution is the marginal distribution of $X_1$ and is denoted by $\P_\sX\!.$ Throughout this paper, we assume that $\P_\sX$ has a Lebesgue density $p.$ The least squares estimator (LSE) for the nonparametric regression function $f$ taken over a function class $\mF$ is given by 
\begin{align*}
    \wh f_n \in \argmin_{f\in \mF}
    \sum_{i=1}^n \big(Y_i-f(X_i)\big)^2.
\end{align*}
In the case of non-uniqueness, the subsequent discussion and analysis applies to any minimizer $\wh f_n.$ If the class $\mF$ is convex, computing the estimator $\wh f_n$ results in a convex optimization problem, which can also be written as a quadratic programming problem, see \cite{MR2360803}. For a fixed function $f,$ the law of large numbers implies that the least squares objective $\sum_i (Y_i-f(X_i))^2$ is close to its expectation $n\E[(Y_1-f(X_1))^2]=n+n\E[\int^1_0 (f_0(x) -f(x))^2 p(x) \, dx].$ It is therefore natural that the standard analysis of LSEs based on empirical process methods and metric entropy bounds for the function class $\mF$ leads to convergence rates with respect to the empirical $L^2$-loss $\|\wh f-f_0\|_n^2:=\tfrac 1n \sum_{i=1}^n (\wh f_n(X_i)-f_0(X_i))^2$ and the associated population version $\E[\int_0^1 (\wh f_n(x) -f_0(x))^2 p(x) \, dx],$ see, for instance, \cite{wainwrighthighdimstat, 2019arXiv190902088K, MR1920390, stone1980optimal}. The latter risk is the expected squared loss if a new $X$ is sampled from the design distribution $\P_\sX$ and $f_0(X)$ is estimated by $\wh f_n(X).$

A widely observed phenomenon is that the distribution of the new $X$ is different from the design distribution of the training data. For example, assume that we want to predict the response $Y$ of a patient to a drug based on a measurement $X$ summarizing the patient's health status. To learn such a relationship, data are collected in one hospital resulting in an estimator $\wh f_n$. Later $\wh f_n$ will be applied to patients from a different hospital. It is conceivable that the distribution of $X$ in the other hospital is different. For instance, there could be a different age distribution, or patients have a different socio-economic status due to variations in the imposed treatment costs. 

Therefore, an important problem is to evaluate the estimator's expected squared risk if a new observation $X$ is sampled from a different design distribution $\Q_\sX$ with density $q$. The associated prediction error under the new distribution is then
\begin{align}
    \int_0^1 \big( \wh f_n(x)-f_0(x)\big)^2 q(x) \, dx.
    \label{eq.prediction_risk_under_q}
\end{align}
If $\P_\sX$ and $\Q_\sX$ are similar enough so that for some finite constant $C$ and any $x\in [0,1]$, $q(x)\leq Cp(x),$ then, the prediction error under the design density $q$ is of the same order as under $p.$ However, in machine learning applications, there are often subsets of the domain with very few data points. This motivates the relevance of the problematic case, where the density $q$ is large in a low-density region of $p.$ Differently speaking, we are more likely to see a covariate $X$ in a region with few training data based on the sample $(X_1,Y_1),\ldots,(X_n,Y_n).$ Since the lack of data in such a region means that the LSE will not fit the true regression function $f_0$ well, this could lead to a large prediction error under the new design distribution. 

An extension of this problem setting is transfer learning under covariate shift. Here we know the least-squares estimator $\wh f_n$ and the sample size $n$ based on the sample $(X_1,Y_1),\ldots,(X_n,Y_n)$ with design density $p.$ On top of that, we have a second, smaller dataset with $m \ll n$ new i.i.d.\ data points $(X_1',Y_1'),\dots,(X_m',Y_m')$ where $Y_i'=f_0(X_i')+\eps_i',$ $i=1,\dots,m$ and $X_1'\sim \Q_\sX.$ In the framework of the hospital data above, this means that we also have data from a small study with $m$ patients from the second hospital. In other words, the regression function $f_0$ remains unchanged, but the design distribution changes. Since the number of extra training data points $m$ is small compared to the original sample size $n,$ we want to quantify how well an estimator combining $\wh f_n$ and the new sample can predict under the new design distribution with associated prediction error \eqref{eq.prediction_risk_under_q}. Establishing convergence rates for the loss in \eqref{eq.prediction_risk_under_q}, given a sample with design density $p$, is, however, a hard problem. To our knowledge, no simple modification of the standard least squares analysis allows to obtain optimal rates for this loss.

To address this problem, we study the case where the LSE is selected within the function class $\mF$ consisting of all $1$-Lipschitz functions. For this setting, we prove under weak assumptions that for a sufficiently large constant $K$ and any $0\leq x\leq 1,$ 
\begin{align}
    \big|\wh f_n(x) - f_0(x)\big| \leq Kt_n(x)
    \label{eq.what_we_prove}
\end{align}
with high probability, where the local convergence rate $t_n$ is the function that is, pointwise, the solution to the equation
\begin{align*}
    t_n(x)^2\, \P_\sX([x\pm t_n(x)])=\frac{\log n}n.
\end{align*}
The article \cite{gaiffas2009uniform} shows under slightly different assumptions on the design density that $t_n$ is locally the optimal estimation rate and constructs a wavelet thresholding estimator that is specifically designed to attain this local convergence rate. We prove that the LSE achieves this optimal local rate without any tuning. This is surprising since the LSE is based on minimization of the (global) empirical $L^2$-distance, and convergence in $L^2$ is weaker than convergence in the weighted sup-norm loss underlying the statement in \eqref{eq.what_we_prove}.

To establish \eqref{eq.what_we_prove}, we only assume a local doubling property of the design distribution. By imposing more regularity on the design density, we can prove that $t_n(x)\asymp (\log n/(np(x))^{1/3}.$ For this result, $p$ is also allowed to depend on the sample size $n$ such that the $p(x)$ in the denominator does not only change the constant but also the local convergence rate. This quantifies how the local convergence rate varies depending on the density $p$ and how small-density regions increase the local convergence rate. In Section \ref{sec.main}, we argue that kernel smoothing with fixed bandwidth has a slower convergence rate than the LSE. Therefore, the least squares fit can better recover the regression function if the values of the density $p$ range over different orders of magnitude. This property is particularly important for machine learning applications.

Based on \eqref{eq.what_we_prove}, we can then obtain a high-probability bound for the prediction error in \eqref{eq.prediction_risk_under_q} by 
\begin{align*}
    \int_0^1 \big( \wh f_n(x)-f_0(x)\big)^2 q(x) \, dx
    \leq K^2 \int_0^1 t_n(x)^2 q(x) \, dx.
\end{align*}
In many cases, simpler expressions for the convergence rate can be derived from the right-hand side. For instance in the case $t_n(x)\asymp (\log n/(np(x))^{1/3},$ the convergence rate is $(\log n/n)^{2/3}$ if $\int_0^1 q(x)/p(x)^{2/3} \, dx$ is bounded by a finite constant.\\

A major contribution of this paper is the proof strategy to establish local convergence rates. For that, we argue by contradiction, first assuming that the LSE has a slower local rate. Afterwards, we construct a local perturbation with smaller least squares loss. This means that the original estimator was not the LSE, leading to the desired contradiction. While a similar strategy has been followed for shape-constrained estimation in \cite{MR2546798, MR1891742}, the construction of the local perturbation and the verification of a smaller least squares loss for Lipschitz functions are both non-standard and involved. We believe that these arguments can be generalized to various extensions beyond Lipschitz function classes.

The paper is structured as follows. In Section \ref{sec.main}, we state the new upper and lower bounds on the local convergence rate. This section precedes a discussion on the imposed doubling condition and examples in Section \ref{sec.examples}. Section \ref{sec.proof_strategy} gives a high-level overview of the new proof strategy to establish local convergence rates. The full proof can be found in Section \ref{sec.proof}. Applications to transfer learning are discussed in Section \ref{sec:TL}. Section \ref{sec:discussion} provides a brief literature review and an outlook. The remaining proofs are deferred to the supplement \cite{supplement}.

\textit{\textbf{Notation:}} For two real numbers $a, b,$ we write $a\vee b = \max(a, b)$ and  $a\wedge b = \min(a, b).$ For any real number $x$, we denote by $\lceil x\rceil$ the smallest integer $m$ such that $m \geq x$ and by $\lfloor x\rfloor$ the greatest integer $m$ such that $m \leq x$. Furthermore, for any set $S$, we denote by $x \mapsto \mathds{1}(x \in S)$ the indicator function of the set $S$. To increase readability of the formulas, we define $[a \pm b] := [a-b, a+b].$ For any two positive sequences $\{a_n\}_{n }, \{b_n\}_{n},$ we say that $a_n \lesssim b_n$ if there exists a constant $0 <c< \infty,$ and a positive integer $N$ such that for all $n \geq N, a_n \leq cb_n$. We write $a_n \asymp b_n$ if $a_n \lesssim b_n$ and $b_n \lesssim a_n$. Finally, if for all $\varepsilon > 0,$ there exists a positive integer $N$ such that for all $n \geq N, a_n \leq \varepsilon b_n,$ then we write $a_n \ll b_n$. For a random variable $X$ and a (measurable) set $A,$ $\P_\sX(A)$ stands for $\P(X\in A).$ For any function $h$ for which the integral is finite, we set $\|h\|_{L^2(\P)}:=\big(\int h^2(x) p(x) \, dx\big)^{1/2}.$ We also write $\|h\|_n := \big(\tfrac 1n \sum_{i=1}^n h^2(X_i)\big)^{1/2}.$

\section{Main results}
\label{sec.main}

In this section, we state the local convergence results for the LSE. Set 
\begin{align*}
    \mM
    := \big\{\text{probability measures that are both supported on $[0, 1]$ and admit a Lebesgue density} \big\}.
\end{align*}
The local convergence rate $t_n$ turns out to be the functional solution to an equation that depends on the design distribution $\P_\sX$.

\begin{lemma}
\label{lem.existence_of_tn}
If $\P_\sX\in \mM$, then, for any $n >1$ and any $x \in [0, 1],$ there exists a unique solution $t_n(x)$ of the equation
\begin{align*}
    t_n(x)^2 \, \P_\sX\big(\big[x\pm t_n(x)\big]\big) = \frac{\log n}n.
\end{align*}
Therefore the function $x\mapsto t_n(x)$ is well defined on $[0,1]$. From now on, we refer to $t_n$ as the spread function (associated to $\P_\sX$).
\end{lemma}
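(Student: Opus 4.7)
The plan is to define the auxiliary function
\[
    g(t) := t^2 \, \P_\sX\big(\big[x \pm t\big]\big), \qquad t \geq 0,
\]
and prove existence and uniqueness of a solution to $g(t) = \log n / n$ via a standard intermediate value argument together with strict monotonicity on the relevant range. The value $x \in [0,1]$ is considered fixed throughout.

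First, I would establish continuity of $g$. Since $\P_\sX$ admits a Lebesgue density $p$, the map $t \mapsto \P_\sX([x \pm t]) = \int_{\max(0,x-t)}^{\min(1,x+t)} p(u)\,du$ is continuous by dominated convergence, so $g$ is continuous on $[0,\infty)$. Next I would identify the boundary values: clearly $g(0) = 0$, and since $x \in [0,1]$ implies $[x \pm 1] \supseteq [0,1]$, we have $g(1) = 1$. For $n > 1$ one has $0 < \log n / n < 1$, so the intermediate value theorem furnishes at least one $t_n(x) \in (0,1)$ with $g(t_n(x)) = \log n / n$.

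For uniqueness the main subtlety is that $g$ need not be strictly increasing on all of $[0,1]$: if $p$ vanishes on a neighborhood of $x$, then $g$ is identically zero on an initial interval. I would therefore set
\[
    t_* := \inf\big\{t \geq 0 \,:\, \P_\sX([x \pm t]) > 0\big\}.
\]
By continuity of $t \mapsto \P_\sX([x \pm t])$ and the definition of $t_*$, one has $\P_\sX([x \pm t_*]) = 0$, while $\P_\sX([x \pm t]) > 0$ for every $t > t_*$. Consequently $g \equiv 0$ on $[0,t_*]$ and $g > 0$ on $(t_*, \infty)$. On $(t_*, \infty)$ strict monotonicity follows immediately: for $t_* < t_1 < t_2$,
\[
    g(t_1) = t_1^2 \, \P_\sX([x \pm t_1]) < t_2^2 \, \P_\sX([x \pm t_1]) \leq t_2^2 \, \P_\sX([x \pm t_2]) = g(t_2),
\]
where the first inequality uses $t_1 < t_2$ together with $\P_\sX([x \pm t_1]) > 0$, and the second uses the monotonicity of $t \mapsto \P_\sX([x \pm t])$.

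To conclude, any solution of $g(t) = \log n / n > 0$ must lie in $(t_*, \infty)$ because $g$ vanishes on $[0,t_*]$; on $(t_*, \infty)$ the function $g$ is strictly increasing, so the solution is unique. The main (mild) obstacle is precisely the possibility that $p$ vanishes near $x$, which I handle by isolating the degenerate region $[0,t_*]$ and showing that every candidate solution necessarily lies beyond it.
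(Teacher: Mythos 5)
Your proposal is correct and takes essentially the same approach as the paper: continuity and the intermediate value theorem for existence, and strict monotonicity of $t \mapsto t^2 \P_\sX([x\pm t])$ past the first point where the probability becomes positive for uniqueness. The only difference is cosmetic — you isolate the degenerate region by defining $t_*$ explicitly as the infimum where the probability is positive, whereas the paper phrases it in terms of the smallest solution $t_0$ of the equation; the underlying argument is identical.
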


The spread function can be viewed as a measure for the local mass of the distribution $\P_\sX$ around $x$. The more mass $\P_\sX$ has around $x$, the smaller $t_n(x)$ is. Whenever necessary, the spread function associated to a probability distribution $\P$ is denoted by $t_n^{\P}$.

To derive a local convergence rate of the least-squares estimator taken over Lipschitz functions, one has to exclude the possibility that the design distribution $\P_\sX$ is completely erratic. Interestingly, no H\"older smoothness has to be imposed on the design density, and it is sufficient to consider design distributions satisfying the following weak regularity assumption.

\begin{definition}
\label{def:local_doubling}
For $n\geq 3$ and $D\geq 2,$ define $\mathcal{P}_n(D)$ as the class of all design distributions $\P_\sX \in \mM$, such that for any $0 < \eta \leq \sqrt{\log n}\, \sup_{x \in [0,1]}t_n(x),$ 
    \begin{align}
        \label{eq.LDP}
        \sup_{x \in [0,1]}\frac{\P_\sX\big([x - 2\eta, x+ 2\eta]\big)}{\P_\sX\big([x - \eta, x+ \eta]\big)} \leq D.\tag{LDP}
    \end{align}
We call \eqref{eq.LDP} the local $D$-doubling property, or local doubling property when the constant $D$ is irrelevant or unambiguous. A design distribution $\P_\sX$ satisfies the (global) doubling property if \eqref{eq.LDP} holds for all $\eta>0.$ Denote by $\mP_G(D)$ the space of all globally doubling measures in $\mM$.
\end{definition}
The restriction $x\in [0,1]$ allows to include distributions with Lebesgue densities that are discontinuous at $0$ or $1.$ For instance the uniform distribution on $[0,1]$ is $2$-doubling, but since $\P_\sX[-3\eta,\eta]/\P_\sX[-2\eta,0]=\infty,$ \eqref{eq.LDP} does not hold if the supremum includes $x=-\eta.$

Since the uniform distribution on $[0,1]$ is contained in $\mathcal{P}_n(2)\subseteq \mathcal{P}_n(D)$ for $D\geq 2,$ we see that these classes are non-empty. Inequality \eqref{eq.LDP} states that doubling the size of a small interval cannot inflate its probability by more than a factor $D.$ The next result shows that the maximum interval size $\sqrt{\log n}\, \sup_{x \in [0,1]}t_n(x)$ tends to zero as $n$ becomes large. 
\begin{lemma}
\label{lem.big_prob}
Let $\P_\sX \in \mP_n(D)$ with $D\geq 2$. If $\varepsilon>0,$ then there exists an $N = N(\varepsilon, D)$ such that for all $n \geq N, \sqrt{\log n}\, \sup_{x \in [0,1]}t_n(x)<\varepsilon.$
\end{lemma}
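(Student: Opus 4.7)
The plan is to argue by contradiction. Fix $\varepsilon>0$ and suppose that for infinitely many $n$ there exist $\P_\sX\in\mP_n(D)$ with $R:=\sqrt{\log n}\,T_n\geq\varepsilon$, where $T_n:=\sup_{x\in[0,1]}t_n(x)$. Since $t_n$ is continuous on the compact interval $[0,1]$ (by continuity and strict monotonicity of $\eta\mapsto \eta^2\P_\sX([x\pm\eta])$), the supremum is attained at some $y\in[0,1]$, and by definition $\P_\sX([y\pm T_n])=\log n/(nT_n^2)$. The strategy is to sandwich $\P_\sX([y\pm R])$ between an upper and a lower bound that together force $nR^3=O_D\bigl(D^{1/R}(\log n)^{O_D(1)}\bigr)$, which is incompatible with $R\geq\varepsilon$ for $n$ large enough.

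First I would derive the upper bound. Iterating \eqref{eq.LDP} from the initial scale $T_n$ up to the maximal admissible scale $R$ requires $J=\lceil\tfrac12\log_2\log n\rceil$ doublings, each of which multiplies the mass by at most $D$ and each of which is permitted since $2^{j-1}T_n\leq R$ throughout. Substituting $T_n=R/\sqrt{\log n}$ yields
\begin{equation*}
    \P_\sX\bigl([y\pm R]\bigr)\;\leq\; D^J\,\frac{\log n}{n T_n^2}\;\leq\;\frac{D\,(\log n)^{2+(\log_2 D)/2}}{n R^2}.
\end{equation*}

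For the lower bound I would use a translation consequence of \eqref{eq.LDP}: for any $z\in[0,1]$ with $|z-y|\leq R$, the containment $[z\pm R]\subseteq [y\pm 2R]$ together with \eqref{eq.LDP} applied at the boundary scale $\eta=R$ gives $\P_\sX([z\pm R])\leq D\,\P_\sX([y\pm R])$. Iterating this shift along the arithmetic progression $z_j=y+jR$ for $j=-\lceil y/R\rceil,\ldots,\lceil(1-y)/R\rceil$ yields $\P_\sX([z_j\pm R])\leq D^{|j|}\P_\sX([y\pm R])$. These $\lesssim 1/R$ translates cover $[0,1]$, so summing and using $\P_\sX([0,1])=1$ produces
\begin{equation*}
    1 \;\leq\; \sum_j \P_\sX\bigl([z_j\pm R]\bigr)\;\leq\;\frac{C_D}{R}\,D^{1/R}\,\P_\sX\bigl([y\pm R]\bigr),
\end{equation*}
whence $\P_\sX([y\pm R])\geq R\,D^{-1/R}/C_D$.

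Combining the two estimates yields $nR^3\leq C_D'\,D^{1/R}(\log n)^{2+(\log_2 D)/2}$. Under the contradiction hypothesis $R\geq\varepsilon$, the right hand side is at most $C_D'\,D^{1/\varepsilon}(\log n)^{2+(\log_2 D)/2}$, which is $o(n)$, while the left hand side is at least $n\varepsilon^3$; this is impossible once $n\geq N(D,\varepsilon)$ is large enough. The main technical obstacle is the translation step in the lower bound: it is the nonstandard ingredient that converts the purely local inequality \eqref{eq.LDP} into a global lower bound on $\P_\sX([y\pm R])$, and it relies crucially on the scale $\eta=R$ being permitted in \eqref{eq.LDP}, exactly matching the design of the LDP window $\sqrt{\log n}\,T_n$. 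The covering count and the geometric sum $\sum_j D^{|j|}\lesssim D^{1/R}$ must be tracked carefully, but no further ingredient beyond \eqref{eq.LDP} and $\P_\sX([0,1])=1$ is needed.
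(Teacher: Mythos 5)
Your argument is correct and, at its core, matches the paper's: both iterate \eqref{eq.LDP} to translate a small window across $[0,1]$ and derive a contradiction with $\P_\sX([0,1]) = 1$. You add a preliminary boosting step from the scale $\sup_x t_n(x)$ up to the maximal admissible scale $\sqrt{\log n}\,\sup_x t_n(x)$ before covering, which trades the paper's $D^{O(\sqrt{\log n}/\varepsilon)}$ factor for a $D^{O(1/\varepsilon)}(\log n)^{O(1)}$ factor, but the ingredients and the covering mechanism are the same.
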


The local doubling condition allows us to consider sample size-dependent design distributions. See Section \ref{sec.examples} for a more in-depth discussion and some examples.

We now show that the spread function is indeed the minimax rate. Denote by $\Lip(\kappa)$ the set of functions $f \colon [0, 1] \to \RR$ that are Lipschitz, with Lipschitz constant at most $\kappa,$ that is, $f\in \Lip(\kappa)$ iff $|f(x)-f(y)|\leq \kappa|x-y|$ for all $x,y\in [0,1].$ If $f\in \Lip(\kappa),$ then we also say that $f$ is $\kappa$-Lipschitz. Recall that $\P_{\!f_0}$ is the distribution of the data in the nonparametric regression model \eqref{eq.mod} if the true regression function is $f_0$ and that $\P_\sX$ denotes the distribution of the design $X$.  

\begin{theorem}
\label{th.main_theorem}
Consider the nonparametric regression model \eqref{eq.mod}. Let $0<\delta<1,$ and $D \geq 2$. If $\wh f_n$ denotes the LSE taken over the class of $1$-Lipschitz functions $\Lip(1)$, then, for a sufficiently large constant $K = K(D, \delta),$
\begin{align*}
    \sup_{\P_\sX \in \mP_n(D)} \  \sup_{f_0\in \Lip(1-\delta)} \P_{\!f_0} \left(\sup_{x \in [0, 1]} t_n(x)^{-1}|\wh f_n(x) - f_0(x)| > K\right) \to 0 \quad \text{as} \ n\to \infty.
\end{align*}
\end{theorem}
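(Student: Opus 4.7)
I would follow the proof-by-contradiction-with-local-perturbation strategy announced by the authors. Fix a constant $K$ to be tuned later and suppose, towards a contradiction, that on an event of non-negligible probability there is some $x^{\star} \in [0,1]$ (depending on the data) with $\wh f_n(x^{\star}) - f_0(x^{\star}) > K t_n(x^{\star})$; the negative sign is symmetric. Because $\wh f_n \in \Lip(1)$ and $f_0 \in \Lip(1-\delta)$, the difference $\wh f_n - f_0$ is $(2-\delta)$-Lipschitz, so the gap at $x^{\star}$ propagates to the interval $[x^{\star} \pm r]$ with $r := K t_n(x^{\star})/(4(2-\delta))$: every $x$ in this interval satisfies $\wh f_n(x) - f_0(x) \geq \tfrac34 K t_n(x^{\star})$. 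Lemma~\ref{lem.big_prob} guarantees that $r$ stays in the range where the doubling property~\eqref{eq.LDP} is available.

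\textbf{Construction of the competitor.} Set $h := K t_n(x^{\star})/4$ and define
\[
\wt f_n(x) \;:=\; \wh f_n(x) \wedge \bigl( f_0(x) + h \bigr).
\]
Since both $\wh f_n$ and $f_0 + h$ are $1$-Lipschitz, the pointwise minimum $\wt f_n$ lies in $\Lip(1)$ and is an admissible competitor in the LSE optimization. The two estimators agree off the random set $A := \{x : \wh f_n(x) > f_0(x) + h\}$, and the core interval $[x^{\star} \pm r]$ lies in $A$. Writing $\Delta(X_i) := \wh f_n(X_i) - \wt f_n(X_i)$, which equals $\wh f_n(X_i) - f_0(X_i) - h$ on $A$ and vanishes elsewhere, one has $\Delta \geq 2h$ throughout $[x^{\star} \pm r]$.

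\textbf{Loss comparison.} A direct expansion gives
\[
\bigl(Y_i - \wt f_n(X_i)\bigr)^2 - \bigl(Y_i - \wh f_n(X_i)\bigr)^2 \;=\; -\Delta(X_i)\bigl(2h + \Delta(X_i)\bigr) + 2 \Delta(X_i) \eps_i,
\]
with both sides vanishing for $X_i \notin A$. Optimality of $\wh f_n$ therefore forces
\[
\sum_{X_i \in A} \Delta(X_i)\bigl(2h + \Delta(X_i)\bigr) \;\leq\; 2 \sum_{X_i \in A} \Delta(X_i) \eps_i.
\]
Since the left-hand summands are nonnegative, restricting the sum to $[x^{\star} \pm r]$ gives a lower bound of $8 h^2 N(x^{\star}, r)$, where $N(x^{\star}, r) := \#\{i : X_i \in [x^{\star}\pm r]\}$. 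The defining equation $t_n(x^{\star})^2 \P_\sX([x^{\star} \pm t_n(x^{\star})]) = \log n/n$ combined with iterated doubling yields $\P_\sX([x^{\star}\pm r]) \gtrsim c_{K,D}\, \log n/(n t_n(x^{\star})^2)$, and Bernstein's inequality then upgrades this to $N(x^{\star}, r) \gtrsim c_{K,D} \log n/t_n(x^{\star})^2$ with overwhelming probability. Hence the deterministic signal is of order $K^2 \log n$.

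\textbf{The main obstacle.} The technical heart of the argument is a uniform upper bound on the stochastic cross term $\sum_{X_i \in A} \Delta(X_i) \eps_i$ that holds across all data-dependent choices of $x^{\star}$, $A$ and $\Delta$. I would discretize $x^{\star}$ to a grid of resolution comparable to $t_n$, paying only a polylogarithmic union-bound cost, and then at each grid location apply a chaining bound for the Gaussian process indexed by the class of nonnegative $(2-\delta)$-Lipschitz functions $\Delta$. Because the diameter of $A$ is not bounded a priori by $t_n(x^{\star})$, a peeling step across that diameter is needed, and the local doubling property~\eqref{eq.LDP} is used to count design points at each scale, controlling the Gaussian variance. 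The expected bound is of the form $h \sqrt{N \log n}$ with a constant independent of $K$, so making $K$ large enough forces the $K^2 \log n$ signal above the noise, contradicting optimality of $\wh f_n$. I anticipate the delicate bookkeeping to be in the peeling step, since one must exclude the possibility that perturbations supported on unusually large components of $A$ also lower the loss; if needed, this is patched by combining the scale-by-scale peeling with a standard global empirical $L^2$-consistency bound for $\wh f_n$ from empirical-process theory.
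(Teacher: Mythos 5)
Your overall skeleton (contradiction from a putative bad point $x^\star$, basic inequality, signal lower bound over $[x^\star \pm r]$, chaining for the noise) matches the paper, but the competitor you propose, $\wt f_n := \wh f_n \wedge (f_0 + h)$, creates a gap that your closing ``patch'' does not close. The difficulty is that $\Delta := \wh f_n - \wt f_n$ is supported on $A = \{\wh f_n > f_0 + h\}$, and $A$ is completely uncontrolled: it could consist of many bumps scattered across $[0,1]$, or even be most of the domain. Your signal lower bound $\sum_A \Delta(\Delta + 2h) \gtrsim K^2 \log n$ comes only from the core $[x^\star \pm r]$, but the noise term $2\sum_A \Delta(X_i)\eps_i$ runs over \emph{all} of $A$, so you cannot restrict the chaining to functions supported near $x^\star$. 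If you fall back on the class of all nonnegative $(2-\delta)$-Lipschitz functions on $[0,1]$ (with metric entropy $\sim 1/r$) and the global empirical-$L^2$ consistency bound $\|\wh f_n - f_0\|_n^2 \lesssim n^{-2/3}$, what you actually get is $\sum_A \Delta^2 \lesssim n^{1/3}$, so the stochastic term is only controlled to order $n^{1/3}$. Since $K^2 \log n \ll n^{1/3}$, there is no contradiction. The scale-by-scale peeling you gesture at does not rescue this, because the far components of $A$ contribute positively to the left-hand side of the basic inequality but \emph{also} contribute to the right-hand noise term, and these do not decouple from the local signal.

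The missing ingredient is precisely what Lemma \ref{lem.g_function} in the paper supplies: a competitor $g_n \in \Lip(1)$ that is not merely sandwiched between $f_0$ and $\wh f_n$ but also \emph{agrees with $\wh f_n$ outside a window of length $O(K t_n/\delta)$}. This is achieved by attaching to $f_0$ a tent of slope $\delta$ (exploiting that $f_0$ is only $(1-\delta)$-Lipschitz, so the tented function stays in $\Lip(1)$), which re-intersects $\wh f_n$ within the window by a separate contact-point argument using an auxiliary point $\tilde x$. With that compact support, the perturbation class has metric entropy $\lesssim K t_n/r$ rather than $\lesssim 1/r$, the chaining gives $\sum \Delta \eps_i \lesssim K^{2/3}\log n$ rather than $n^{1/3}$, and the scaling $K^2 \lesssim K^{2/3}$ produces the contradiction. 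A pointwise truncation of $\wh f_n$ cannot provide this localization; you would need to build the competitor directly with controlled support, as the paper does.

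Two minor points. First, the sandwich property for your $\wt f_n$ is indeed correct (where $\wt f_n \neq \wh f_n$ one has $\wt f_n = f_0 + h$, so both factors are positive), and so is the $1$-Lipschitz property of a pointwise min; that part of the construction is fine. Second, the paper also needs to handle the case $K$ large uniformly: it is important that the constants in the lower bound do not degrade with $K$, which is why the auxiliary maximizer $\tilde x$ and the scale $s_n$ are introduced rather than working directly at $x^\star$ --- your outline does not address this, but it is a secondary issue compared to the support-localization gap.
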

The proof reveals that if the constant $K$ is chosen as the value $K_*$ defined in \eqref{eq.K*_def}, the right-hand side of Theorem \ref{th.main_theorem} converges to zero with a polynomial rate in the sample size $n$. For $\delta \to 0,$ $K_* \asymp \delta^{-1/2 - 3\log_2(D)/4}.$ Consequently, the constant $K$ will become large for small $\delta$ and large doubling constant $D$. We want to stress that no attempt has been made to optimize the constants and that further refinements of the inequalities in the proof will likely improve the constant $K$ considerably.

Since the previous result is uniform over design distributions $\P_\sX \in \mathcal{P}_n(D),$ we can also consider sequences $\P_\sX^n.$ While, at first sight, it might appear unnatural to consider for every sample size $n$ a different design distribution, this constitutes a useful statistical concept to study the effect of low-density regions on the convergence rate. Indeed, the influence of a small density region disappears in the constant for a fixed density, while the dependence on the sample size makes the effect visible in the convergence rate. Moreover, sample size-dependent quantities are widely studied in mathematical statistics, most prominently in high-dimensional statistics, where the number of parameters typically grows with the sample size.

One key question is to identify conditions for which the local convergence rate $t_n$ has a more explicit expression. One such instance is the case of H\"older-smooth design densities. Let $\lfloor \beta \rfloor$ denote the largest integer that is strictly smaller than $\beta.$ The H\"older-$\beta$ semi-norm of a function $g:\mathbb{R}\to \mathbb{R}$ is defined as
\begin{align}
    |g|_{\beta}:=\sup_{x,y\in \mathbb{R}, \, x\neq y} \, \frac{|g^{(\lfloor \beta\rfloor)}(x)-g^{(\lfloor \beta\rfloor)}(y)|}{|x-y|^{\beta-\lfloor \beta\rfloor}}.
\end{align}
For $\beta=1,$ $|g|_\beta$ is the Lipschitz constant of $g$.

\begin{corollary}
\label{cor.corollary_lip}
Consider the nonparametric regression model \eqref{eq.mod}. Let $0<\delta<1$ and $\wh f_n$ be the LSE taken over the class of $1$-Lipschitz functions $\Lip(1)$. For $\beta \in (0, 2]$, let $\P_\sX^n$ be a sequence of distributions with corresponding Lebesgue densities $p_n.$ If for any $n,$ there exists a non-negative function $h_n$ with $p_n(x)=h_n(x)$ for all $x\in [0,1],$ $\max_n |h_n|_\beta \leq \kappa$ and $\min_{x \in [0, 1]} p_n(x) \geq n^{-\beta/(3+\beta)} \log n,$ then, for all $n \geq \exp(4\kappa)\vee 9,$
\begin{align}
    \Big(\frac{\log n}{3np_n(x)}\Big)^{1/3} \leq t_n(x) \leq \Big(\frac{2\log n}{np_n(x)}\Big)^{1/3},
    \label{eq.tn_equiv}
\end{align}
$\P_\sX^n \in \mP_n(2 + 2^{\beta/3}3^\beta \kappa + 2^{1/3}3\kappa^{1/\beta}),$ and there exists a finite constant $K'$ independent of the sequence $\P_\sX^n$, such that
\begin{align*}
        \sup_{f_0 \in \Lip(1 - \delta)} \ \P_{\!f_0}\bigg(\sup_{x \in [0, 1]}p_n(x)^{1/3}\big|\wh f_n(x) - f_0(x)\big| \geq K'\Big(\frac{\log n}n\Big)^{1/3}\bigg) \to 0 \quad \text{as }n\to \infty.
\end{align*}
\end{corollary}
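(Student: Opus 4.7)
The plan is to reduce the corollary to Theorem \ref{th.main_theorem} by verifying, under the Hölder smoothness assumption on $p_n$, two things: (i) the rate equivalence \eqref{eq.tn_equiv}, and (ii) that $\P_\sX^n$ belongs to $\mP_n(D)$ for the stated $D$.

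For (i), the defining equation $t_n(x)^2\,\P_\sX^n([x \pm t_n(x)]) = \log n / n$ becomes an essentially cubic inequality once the probability is approximated by a Taylor expansion of $p_n$ around $x$. For $\beta \in (0, 1]$, Hölder smoothness gives $|p_n(y) - p_n(x)| \leq \kappa |y-x|^\beta$ directly, while for $\beta \in (1, 2]$ the first-order term $p_n'(x)(y-x)$ is odd in $y-x$ and hence integrates to zero over the symmetric interval $[x-t, x+t]$. In either case, for sufficiently small $t > 0$,
\[
\bigl|\P_\sX^n([x \pm t]) - 2t\, p_n(x)\bigr| \leq \frac{2\kappa}{\beta+1}\, t^{\beta+1},
\]
with a minor adjustment when $[x-t, x+t]$ protrudes past $\{0,1\}$. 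Substituting into the defining equation produces $2 t_n(x)^3 p_n(x) = \log n / n + O(\kappa\, t_n(x)^{3+\beta})$, and the lower bound $p_n(x) \geq n^{-\beta/(3+\beta)}\log n$ together with $n \geq \exp(4\kappa) \vee 9$ forces the correction term to be strictly subdominant, which sandwiches $t_n(x)$ between the two cube roots in \eqref{eq.tn_equiv}.

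For (ii), the same Taylor-type bound gives
\[
\frac{\P_\sX^n([x \pm 2\eta])}{\P_\sX^n([x \pm \eta])} \leq \frac{4\eta\, p_n(x) + \tfrac{2\kappa}{\beta+1}(2\eta)^{\beta+1}}{2\eta\, p_n(x) - \tfrac{2\kappa}{\beta+1}\eta^{\beta+1}},
\]
valid in the regime where the denominator is positive. Setting $r = \kappa \eta^\beta / p_n(x)$, the right-hand side equals $2(1 + 2^\beta r/(\beta+1))/(1 - r/(\beta+1))$, which yields the contribution $2 + 2^{\beta/3}3^\beta \kappa$ to the doubling constant after inserting a uniform upper bound on $r$ obtained from the cutoff $\eta \leq \sqrt{\log n}\,\sup_{x \in [0,1]} t_n(x)$, \eqref{eq.tn_equiv}, and the density lower bound. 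In the complementary regime where $p_n(x)$ is small relative to $\kappa \eta^\beta$, the ratio is bounded by directly comparing the integrals of $p_n$ on the two windows via the Hölder bound, producing the remaining term $2^{1/3}3\kappa^{1/\beta}$.

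Once both hypotheses are verified, Theorem \ref{th.main_theorem} yields, with probability tending to $1$, $\sup_{x\in[0,1]} t_n(x)^{-1}|\wh f_n(x) - f_0(x)| \leq K$, and combining with the upper bound $t_n(x) \leq (2\log n/(n p_n(x)))^{1/3}$ gives the desired conclusion with $K' = 2^{1/3}K$. The main obstacle is step (ii): the doubling constant asserted is sharp in $\kappa$, so the bookkeeping must balance the two regimes $\kappa \eta^\beta \lesssim p_n(x)$ and $\kappa \eta^\beta \gtrsim p_n(x)$ carefully, and must correctly handle boundary points $x \in [0, 2\eta] \cup [1 - 2\eta, 1]$ where the symmetric Taylor cancellation no longer applies.
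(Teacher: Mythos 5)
Your overall strategy---verify the rate equivalence \eqref{eq.tn_equiv} and the doubling membership, then invoke Theorem~\ref{th.main_theorem}---matches the paper's, and your interior Taylor-cancellation argument for $\beta\in(1,2]$ is sound in the regime $t_n(x)\le x\wedge(1-x)$. However, there is a genuine gap that you dismiss as a ``minor adjustment'' at the boundary but which is in fact the crux of the $\beta>1$ case, and it is visible from the fact that your proof never uses the hypothesis about the extension $h_n$. For $\beta\in(1,2]$ and for $x$ in the range $t_n(x)>x$ (or $t_n(x)>1-x$), the interval $[x\pm t_n(x)]\cap[0,1]$ is not symmetric about $x$, so the first-order Taylor term $p_n'(x)(u-x)$ does \emph{not} integrate to zero. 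The Hölder condition $|p_n|_\beta\le\kappa$ controls only $|p_n'(u)-p_n'(v)|$, not $|p_n'(x)|$ itself, so the leftover linear term cannot be absorbed. The paper handles this with the gradient-flatness inequality $|p_n'(x)|\le\kappa^{1/\beta}p_n(x)^{(\beta-1)/\beta}$ (equation \eqref{eq.7531}, quoting Theorem~4 and (2.1) of \cite{MR3714756}), which is precisely what requires $p_n$ to extend to a nonnegative $\beta$-Hölder function $h_n$ on all of $\RR$; this is why $h_n$ appears in the hypotheses. Without that inequality the boundary estimate $B=|p'(x)(t_n(x)-x)/2|\le p_n(x)/4$ in the paper's proof has no replacement, and part (i) fails for $\beta>1$.

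The same ingredient is hidden in your step (ii). You propose to center the Taylor expansion at $x$ and cancel the linear term by the symmetry of $[x\pm2\eta]\setminus[x\pm\eta]$, but the paper instead needs the lower bound $\P_\sX^n([x\pm\eta])\ge 2\eta\rho$ with $\rho=\min_{u\in[x\pm\eta]}p_n(u)$, and therefore must expand around the (arbitrary, eventually minimizing) point $v\in[x\pm\eta]$; the set is not symmetric about $v$, and the linear term $p_n'(v)(u-v)$ is then controlled via the same flatness bound, producing the $3\kappa^{1/\beta}\,\eta/\underline p_n^{1/\beta}$ contribution. Your ``two regimes'' $\kappa\eta^\beta\lesssim p_n(x)$ versus $\kappa\eta^\beta\gtrsim p_n(x)$ do not reflect what happens: all three terms $2$, $\kappa3^\beta\eta^\beta/\underline p_n$, and $3\kappa^{1/\beta}\eta/\underline p_n^{1/\beta}$ come from a single expansion, and the last one is unreachable by the remainder bound alone. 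To repair your proof you would need to import the flatness inequality, use the $h_n$ extension, and redo both the boundary regime of \eqref{eq.tn_equiv} and the lower bound on $\P_\sX^n([x\pm\eta])$ accordingly.
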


In the previous result, the regression function is assumed to be Lipschitz, and $\beta$ denotes the smoothness index of the design densities $p_n.$ The convergence rate $(\log n/n)^{1/3}$ is known to be the optimal nonparametric rate for Lipschitz regression functions, sup-norm loss and uniform fixed design, cf. \cite{nonparest}, Corollary 2.5.

The rate $(\log n/(np_n(x))^{1/3}$ is natural, since $np_n(x)$ can be viewed as local effective sample size around $x.$

For $\beta\in (0,1],$ we can always choose $h_n(x)=p_n(0)$ for $x<0,$ $h_n(x)=p_n(x)$ for $x\in [0,1]$, and $h_n(x)=p_n(1)$ for $x>1.$ While the rate is independent of the smoothness index $\beta,$ we can allow faster decaying low density regions if $\beta$ gets larger. The fastest possible decay is $n^{-2/5}\log n$ if $\beta=2.$

To extend the result to $\beta>2$ and to allow for even smaller densities, it is widely believed that imposing H\"older smoothness is insufficient. One way around this is to use H\"older smoothness plus some extra flatness constraint. See \cite{MR3449768, MR3714756} for more on this topic. 

The lower bound on the small density regions in Corollary \ref{cor.corollary_lip} ensures that the local doubling property \eqref{eq.LDP} is satisfied. A lower bound is also necessary, as otherwise $p_n(x)\ll \log n/n$ would imply that the rate $t_n(x) \asymp (\log n/(np_n(x))^{1/3}$ diverges. The next lemma shows how the spread function behaves at a point with vanishing Lebesgue density $p.$

\begin{lemma}
\label{lem.wright_assumption_explored_for_tn}
Let $\P_\sX \in \mP_G(D)$ with $D \geq 2$ and density $p$. Suppose that $p(x_0) = 0$ for some $x_0 \in [0, 1]$. If there exists some $A, \alpha > 0$ and an open neighbourhood $U$ of $x_0$ such that for any $x \in U,$ $1/A\leq |p(x) - p(x_0)|/|x - x_0|^\alpha\leq A,$ then, there exists $N>0$, depending only on $U$ and $D$, such that for any $n > N$,
\begin{align*}
    \Big(\frac{(\alpha + 1)\log n}{An}\Big)^{1/(\alpha + 3)} \leq t_n(x_0) \leq \Big(\frac{(\alpha + 1)A\log n}{n}\Big)^{1/(\alpha + 3)}.
\end{align*}  
\end{lemma}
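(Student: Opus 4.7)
My plan is to read off the rate directly from the defining equation of the spread function
\[
    t_n(x_0)^2\,\P_\sX\bigl(\bigl[x_0 \pm t_n(x_0)\bigr]\bigr) = \frac{\log n}{n}
\]
given by Lemma~\ref{lem.existence_of_tn}, after converting the local hypothesis on $p$ into a two-sided estimate on the probability of small intervals around $x_0$. Since $p(x_0) = 0$ and $p \geq 0$, the assumption rewrites as
\[
    \frac{|x-x_0|^\alpha}{A}\;\leq\; p(x)\;\leq\; A\,|x-x_0|^\alpha,\qquad x \in U.
\]

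First I would fix any $\eta_0 > 0$ small enough that $[x_0-\eta_0,x_0+\eta_0] \subset U$ (intersected with $[0,1]$ if $x_0$ is near the boundary; this only affects constants). Integrating the pointwise bounds on $p$ and using $\int_{-\eta}^{\eta}|u|^\alpha\,du = 2\eta^{\alpha+1}/(\alpha+1)$ gives, for every $\eta \in (0,\eta_0]$,
\[
    \frac{2\,\eta^{\alpha+1}}{A(\alpha+1)} \;\leq\; \P_\sX\bigl([x_0 \pm \eta]\bigr)\;\leq\; \frac{2A\,\eta^{\alpha+1}}{\alpha+1}.
\]
Multiplying by $\eta^2$ and substituting $\eta = t_n(x_0)$ into the defining equation isolates $t_n(x_0)^{\alpha+3}$ between explicit multiples of $\log n/n$, from which the claimed two-sided bound on $t_n(x_0)$ follows by taking $(\alpha+3)$-th roots. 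The constants coming out of this calculation match the stated bounds up to an absolute factor that can be absorbed into $A$ (or into the choice of $N$).

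The main subtlety is that the previous step is only valid provided $t_n(x_0) \leq \eta_0$, i.e., the argument at which we evaluate $\P_\sX$ actually lies in the region where the polynomial bounds on $p$ are available. To handle this, I would invoke the uniqueness in Lemma~\ref{lem.existence_of_tn}, which forces the continuous map $\eta \mapsto \eta^2\,\P_\sX([x_0 \pm \eta])$ to be strictly increasing through the value $\log n/n$ at $\eta = t_n(x_0)$. Evaluating at $\eta_0$ gives the fixed positive lower bound
\[
    \eta_0^2\,\P_\sX\bigl([x_0 \pm \eta_0]\bigr)\;\geq\;\frac{2\,\eta_0^{\alpha+3}}{A(\alpha+1)}>0,
\]
so for all $n$ larger than some threshold $N$ depending only on $\eta_0$ (hence only on $U$), the right-hand side $\log n/n$ is dominated by this quantity and monotonicity yields $t_n(x_0) \leq \eta_0$. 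The truly laborious part of the lemma is this justification that $t_n(x_0)$ sits in the asymptotic regime; once this is in place, the rest is a routine algebraic inversion of the defining equation.
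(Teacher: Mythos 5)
Your proof is correct and follows essentially the same route as the paper's: convert the hypothesis into a two-sided polynomial bound on $p$ near $x_0$, integrate to bound $\P_\sX([x_0\pm\eta])$, substitute $\eta = t_n(x_0)$ into the defining equation from Lemma \ref{lem.existence_of_tn}, and invert. The one place you diverge — and arguably improve on the paper — is the justification that $t_n(x_0)$ eventually falls inside the window where the polynomial bounds are available. The paper invokes Lemma \ref{lem.big_prob} and Remark \ref{rem.tnPX} to conclude $\sup_x t_n(x)<\log^{-1/2}n$; those results presuppose $\P_\sX\in\mP_n(D)$, a hypothesis not actually present in this lemma's statement. Your monotonicity argument (the map $\eta\mapsto\eta^2\P_\sX([x_0\pm\eta])$ is nondecreasing, its value at a fixed $\eta_0$ is a strictly positive constant determined by $U$, and $\log n/n\to 0$) uses only what the lemma assumes, so it is the more self-contained route. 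One caveat you should not brush aside: the factor of $2$ produced by the symmetric integral does not genuinely vanish by absorbing it into $A$ or into $N$ — for interior $x_0$ it weakens the lower bound on $t_n(x_0)$ by a factor $2^{1/(\alpha+3)}$, and the stated constants are exact only when $[x_0\pm t]\cap[0,1]$ is effectively one-sided (e.g.\ $x_0=0$, as in Example \ref{ex.example2}). The paper's own display \eqref{eq.zero_behaviour_ineq} silently drops this factor, so you are in good company, but it is a real mismatch between what your computation yields and what the lemma claims, not something the choice of $N$ can fix.
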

An immediate consequence is that if $p$ is $k$ times differentiable, $p^{(\ell)}(x_0) = 0$ for all $\ell <k$, and $p^{(k)}(x_0) \neq 0$, then $t_n(x_0) \asymp (\log n/n)^{1/(k+3)}.$

We complement Theorem \ref{th.main_theorem} with a matching minimax lower bound. A closely related result is Theorem 2 in \cite{gaiffas2009uniform}.

\begin{theorem}
\label{th.lower_bound}
If $C_\infty$ is a positive constant, then there exists a positive constant $c,$ such that for any sufficiently large $n,$ and any sequence of design distribution $\P_\sX^n \in \mM$ with corresponding Lebesgue densities $p_n$ all upper bounded by $C_\infty,$ we have
\begin{align*}
    \inf_{\wh f_n} \sup_{f_0 \in \Lip(1)} \P_{\!f_0}\bigg( \sup_{x \in [0, 1]} \,  t_n(x)^{-1}|\wh f_n(x) - f_0(x)| \geq \frac {1}{12} \bigg) \geq c,
\end{align*}
where the infimum is taken over all estimators.
\end{theorem}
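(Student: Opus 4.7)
The plan is to establish the lower bound via the standard many-hypothesis reduction to Fano's inequality, with hypotheses taken to be Lipschitz hat functions centered at carefully chosen points $x_1, \ldots, x_M \in [0, 1]$ that are pairwise well separated in the $t_n^{-1}$-weighted sup norm but have pairwise Kullback--Leibler divergences that are small relative to $\log M$. The critical parameters in the construction will be the number $M$ of hypotheses and a small constant $c \in (0, 1)$ controlling the height of the hats.

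\textbf{Locating centers at which $t_n$ has its typical order.} The crucial preliminary step is to find enough well-spaced points at which $t_n$ has its typical order $(\log n / n)^{1/3}$. Setting $r_n := 2(\log n/n)^{1/3}$, monotonicity of $t \mapsto t^2 \P_\sX^n([x\pm t])$ rewrites $\{t_n \leq r_n\} = \{x : \P_\sX^n([x \pm r_n]) \geq r_n/8\}$. To show this set has positive Lebesgue measure I would average $\P_\sX^n([x \pm r_n])$ over $x \in [r_n, 1 - r_n]$ via Fubini, obtaining $\int_{r_n}^{1 - r_n} \P_\sX^n([x \pm r_n])\, dx \geq 2 r_n(1 - 4 r_n C_\infty)$, and combine this with the pointwise bound $\P_\sX^n([x \pm r_n]) \leq 2 r_n C_\infty$. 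A pigeonhole argument then yields $|T| \geq c_0/C_\infty$ for $T := \{x \in [r_n, 1 - r_n] : t_n(x) \leq r_n\}$ and all $n$ sufficiently large, with $c_0$ an absolute constant. A Vitali-type packing in $T$ then extracts $M \geq c_1 (n/\log n)^{1/3}$ centers $x_1, \ldots, x_M \in T$ with pairwise distance at least $2 c r_n$.

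\textbf{Separation, information bound, and Fano.} Take hypotheses $f_0^{(j)}(x) := \max(0, c\, t_n(x_j) - |x - x_j|) \in \Lip(1)$, each supported in $[x_j \pm c\, t_n(x_j)] \subseteq [x_j \pm c r_n]$ and hence with pairwise disjoint supports by the spacing. At the center $x_j$ we have $f_0^{(j)}(x_j) = c\, t_n(x_j)$ while $f_0^{(k)}(x_j) = 0$, which gives the separation $\sup_x t_n(x)^{-1}|f_0^{(j)}(x) - f_0^{(k)}(x)| \geq c$ for $j \neq k$. The defining equation of the spread function together with the disjoint-support property yield $\int (f_0^{(j)})^2 p_n\, dx \leq c^2 t_n(x_j)^2\, \P_\sX^n([x_j \pm t_n(x_j)]) = c^2 \log n / n$ and hence pairwise Kullback--Leibler bounds $\leq c^2 \log n$. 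Fano's inequality therefore gives $\inf_{\widehat j}\max_j \P_{\!f_0^{(j)}}(\widehat j \neq j) \geq 1 - (c^2 \log n + \log 2)/\log M \geq 1 - 3 c^2 - o(1)$ since $\log M \geq \tfrac{1}{3}\log n - O(\log\log n)$. Defining the matched tester $\widehat j := \argmin_k \sup_x t_n(x)^{-1}|\widehat f_n(x) - f_0^{(k)}(x)|$ and applying the triangle inequality shows that $\sup_x t_n(x)^{-1}|\widehat f_n - f_0^{(j)}| < c/2$ forces $\widehat j = j$, so choosing $c = 1/6$ (yielding threshold $c/2 = 1/12$ and $1 - 3 c^2 = 11/12$) converts the Fano bound into the claimed estimation lower bound.

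\textbf{Main obstacle.} The chief technical difficulty is the packing step: showing, solely from the $L^\infty$-bound on $p_n$ and without any regularity assumption, that $\{t_n \leq r_n\}$ always has Lebesgue measure bounded below uniformly in $\P_\sX^n \in \mM$. This step carries the content of the theorem, since mass conservation $\int p_n = 1$ combined with $p_n \leq C_\infty$ must be shown to force the existence of a region of positive measure in which the local effective sample size is high, and it is precisely in such a region that the rate $(\log n/n)^{1/3}$ becomes information-theoretically unavoidable.
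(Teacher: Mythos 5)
Your proposal is correct, and at the structural level it is the same route the paper takes: both arguments identify $\Theta((n/\log n)^{1/3})$ well-separated points at which $t_n$ is $O((\log n/n)^{1/3})$, attach disjointly supported hat functions $f_j(x) = (c\, t_n(x_j) - |x-x_j|)_+$ with $c=1/6$, compute the KL divergence via $\tfrac n2\int f_j^2 p_n \leq \tfrac{c^2}2 \log n$, note the separation $\|(f_i-f_j)/t_n\|_\infty \geq c$, and invoke a multiple-testing/Fano bound with threshold $c/2=1/12$. The one genuine difference lies in how the many centers are located. The paper partitions $[0,1]$ into a fixed grid of intervals $I_k = [(2k-1)\psi_N \pm \psi_N]$ of half-width $\psi_N = (\log N/N)^{1/3}$ and runs the mass-conservation/counting argument $1 \leq \sum_k \overline{\P}(I_k) \leq (M_N - s_N)\psi_N + 2s_N C_\infty\psi_N$ to show a constant fraction of grid cells have mass $\geq \psi_N$, from which $t_n(x_j) \leq \sqrt 2\psi_N$ at the corresponding centers; the grid spacing then supplies the separation for free. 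You instead average $\P_\sX^n([x\pm r_n])$ over $x$ via Fubini to lower-bound the Lebesgue measure of $\{t_n \leq r_n\}$ by $c_0/C_\infty$ and then perform a Vitali-type packing inside this set. Both steps are elementary and deliver the same quantitative conclusion; the paper's version is marginally more economical because it fuses the density argument and the separation into one stroke, whereas yours is perhaps cleaner conceptually in isolating the measure-theoretic content before packing. (Also worth noting: the paper does not prove Theorem~\ref{th.lower_bound} directly but obtains it as the $m=0$ specialization of the covariate-shift lower bound Theorem~\ref{th.lower_bound2}; your direct proof is fine and in fact simpler for the $m=0$ case, and it uses Tsybakov's Theorem~2.5 in the guise of Fano rather than citing it by name.)
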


The proof is deferred to Appendix D in the Supplement \cite{supplement}.

Corollary \ref{cor.corollary_lip} states that $t_n(x) \asymp (\log n/(np_n(x)))^{1/3}.$ Combined with the lower bound, this shows that the local minimax estimation rate in this framework is $(\log n/(np_n(x)))^{1/3}.$

It is known that for Lipschitz functions and squared $L^2$-loss, the LSE achieves the minimax estimation rate $n^{-2/3}$. Summarizing the statements on the convergence rates above shows that the LSE is also minimax rate optimal with respect to the stronger weighted sup-norm loss.\\

Next, we discuss how the derived local rates imply several advantages of the LSE if compared to kernel smoothing estimators. In the case of uniform design $p(x)=\mathds{1}(x\in [0,1]),$ the LSE achieves the convergence rate $n^{-2/3}$ with respect to squared $L^2$-loss and Corollary \ref{cor.corollary_lip} gives the rate $(\log n/n)^{1/3}$ with respect to sup-norm loss. To our knowledge, it is impossible to obtain these two rates simultaneously for kernel smoothing estimators. The squared $L^2$ rate $n^{-2/3}$ can be achieved for a kernel bandwidth $h\asymp n^{-1/3}$ and the sup-norm rate $(\log n/n)^{1/3}$ requires more smoothing in the sense that the bandwidth should be of the order $(\log n/n)^{1/3},$ see Corollary 1.2 and Theorem 1.8 in \cite{nonparest}. Any bandwidth choice in the range $n^{-1/3}\lesssim h\lesssim (\log n/n)^{1/3}$ will incur an additional $\log n$-factor in at least one of these two convergence rates of the kernel smoothing estimator. Although the suboptimality in the rate is only a $\log n$-factor, it is surprising that the LSE does not suffer from this issue.

Secondly, we argue that kernel smoothing estimators with fixed global bandwidth cannot achieve the local convergence rate $(\log n/(np_n(x))^{1/3}$ in the setting of Corollary \ref{cor.corollary_lip}. Denote the bandwidth by $h$ and the kernel smoothing estimator by $\wh f_{nh}.$ The decomposition in stochastic error and bias leads to an inequality of the form
\begin{align}
    \big|\wh f_{nh}(x) -f_0(x)\big| 
    \lesssim \underbrace{\sqrt{\frac{\log n}{nhp_n(x)}}}_{\text{stochastic error}}
    + \underbrace{h}_{\mathrlap{\text{deterministic error}}}, \quad \text{for all \ } x\in [0,1],
    \label{eq.kse_error_bound}
\end{align} 
with high probability. Since the dependence on the density $p_n(x)$ is typically ignored, we provide a heuristic for this bound in Appendix B in the supplement \cite{supplement}. To balance the two errors, one would have to choose as a bandwidth $h\asymp (\log n/(np_n(x)))^{1/3}.$ In this case, the local convergence rate would also be $(\log n/(np_n(x))^{1/3}$. But this requires choosing the bandwidth locally depending on $x.$ From that, one can deduce that any global choice for $h$ in \eqref{eq.kse_error_bound} leads to suboptimal local rates. It is, therefore, surprising that although the LSE is based on a global criterion, it changes the amount of smoothing locally to adapt to the amount of data points in each regime. This is a clear advantage of the least squares method over smoothing procedures. This benefit seems particularly advantageous for machine learning problems that typically have high- and low-density regions in the design distribution.\\

To draw uniform confidence bands, but also for the application to transfer learning discussed later, it is important to estimate the spread function $t_n$ from data. For $\wh \P_\sX^n(A):=\tfrac 1n \sum_{i=1}^n \mathds{1}(X_i\in A)$ the empirical design distribution, a natural estimator is
\begin{align}
    \wh t_n(x):= \inf \Big\{t: t^2\wh \P_\sX^n([x\pm t])\geq \frac{\log n}{n}\Big\}. 
    \label{eq.tn_def}
\end{align}

\begin{theorem}
\label{thm.estimate_tn}
If $\P_\sX\in \mP_G(D)$ for some $D \geq 2$ and $\|p\|_\infty < \infty,$ then 
\begin{align*}
    \max_{n>1} \, \sup_{x\in[0,1]} \, \sqrt{\log n} \, \Big| \frac{\wh t_n(x)}{t_n(x)}-1\Big|<\infty, \quad \text{almost surely,}
\end{align*}
where $t_n(x)$ is the spread function associated to $\P_\sX.$
\end{theorem}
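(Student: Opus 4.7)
The plan is to sandwich $\wh t_n(x)$ between $t_n(x)(1\pm C\,t_n(x))$ uniformly in $x$, and then to convert this gap into the required $1/\sqrt{\log n}$ rate by invoking Lemma~\ref{lem.big_prob}. Two ingredients are needed: (i) a uniform relative concentration bound for the empirical distribution over intervals, and (ii) the quantitative monotonicity of $t\mapsto t^2\P_\sX([x\pm t])$ at $t=t_n(x)$.

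For (i) I would first show that, almost surely, there exists a constant $C_1$ such that for all sufficiently large $n$, every $x\in[0,1]$ and every $t$ with $\P_\sX([x\pm t])\geq \log n/n$,
\[
\bigl|\wh\P_\sX^n([x\pm t])-\P_\sX([x\pm t])\bigr|\leq C_1\sqrt{\P_\sX([x\pm t])\,\frac{\log n}{n}}.
\]
Since the class of intervals on $[0,1]$ has VC dimension two, this follows from a Bernstein inequality applied on an $n^{-c}$-grid in $(x,t)$, combined with a union bound over the $O(n^{2c})$ grid points and a Borel--Cantelli argument; the assumption $\|p\|_\infty<\infty$ guarantees that the discretization error is of the same order as the concentration bound.

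For (ii), fix $x\in[0,1]$ and $\epsilon\in(0,1/2]$. If $t=t_n(x)(1+\epsilon)$, the inclusion $[x\pm t_n(x)]\subseteq[x\pm t]$ gives $\P_\sX([x\pm t])\geq \log n/(n\,t_n(x)^2)$, so $\sqrt{\log n/(n\,\P_\sX([x\pm t]))}\leq t_n(x)$ and the concentration bound above yields
\[
t^2\wh\P_\sX^n([x\pm t])\;\geq\;(1+\epsilon)^2\bigl(1-C_1 t_n(x)\bigr)\,\frac{\log n}{n}.
\]
Picking $\epsilon$ as a sufficiently large multiple of $t_n(x)$ (legitimate for large $n$ by Lemma~\ref{lem.big_prob}, which forces $t_n(x)\to 0$) makes the right-hand side strictly larger than $\log n/n$, hence $\wh t_n(x)\leq t_n(x)(1+C_2\,t_n(x))$. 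Symmetrically, taking $t=t_n(x)(1-\epsilon)$ and using the local doubling property~\eqref{eq.LDP} to obtain $\P_\sX([x\pm t])\geq \P_\sX([x\pm t_n(x)])/D$ (so step~(i) still applies, with constant $C_1\sqrt D$), combined with the trivial $\P_\sX([x\pm t])\leq \P_\sX([x\pm t_n(x)])$, gives a matching upper bound on $t^2\wh\P_\sX^n([x\pm t])$ and forces $\wh t_n(x)\geq t_n(x)(1-C_3\,t_n(x))$. Altogether $|\wh t_n(x)/t_n(x)-1|\leq C_4\,t_n(x)$, uniformly in $x$, almost surely for $n$ large.

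Multiplying by $\sqrt{\log n}$ and invoking Lemma~\ref{lem.big_prob} (which bounds $\sqrt{\log n}\,\sup_x t_n(x)$ uniformly in $n$) gives the desired $O(1)$ bound for large $n$. For the finitely many small $n$, the continuity and strict positivity of $t_n$ on $[0,1]$ together with $\wh t_n(x)\in(0,1]$ render $\sup_x|\wh t_n(x)/t_n(x)-1|$ deterministically finite, so the maximum over $n>1$ is almost surely finite. The main obstacle is the \emph{relative} (rather than merely additive) nature of the concentration bound in~(i): an additive bound of order $\sqrt{\log n/n}$ would only yield $|\wh t_n/t_n-1|=O(1)$, whereas we need $O(t_n(x))=o(1/\sqrt{\log n})$, so the argument must carefully use a ratio-type Bernstein/Talagrand inequality and verify that the discretization preserves the multiplicative rate through the Borel--Cantelli step.
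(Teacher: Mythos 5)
Your proposal follows the same overall strategy as the paper: prove a uniform \emph{relative} (ratio-type) concentration bound for the empirical measure over intervals, then use the monotonicity of $t\mapsto t^2\P_\sX([x\pm t])$ and Lemma~\ref{lem.big_prob} to sandwich $\wh t_n(x)$ around $t_n(x)$. The genuine difference is the source of ingredient (i): you propose a self-contained derivation via Bernstein on an $n^{-c}$-grid plus a union bound plus Borel--Cantelli, whereas the paper simply imports Alexander's VC ratio-type inequality (Theorem~4.1 of \cite{MR890285}, restated as Theorem~\ref{thm.4.1inMR890285}), which directly delivers the almost-sure $\limsup$ bound uniformly over intervals $I$ with $\log^2 n/n\le\P_\sX(I)\le n^{-1/4}$ and avoids all grid bookkeeping. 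Your DIY route is feasible but you should be aware of two points your sketch glosses over: first, the admissible range should start at roughly $\log^2 n/n$ rather than $\log n/n$ (this is why the paper invokes Remark~\ref{rem.tnPX}), and $\|p\|_\infty<\infty$ is also needed to verify $\P_\sX([x\pm t_n(x)])\le n^{-1/4}$, not only for discretization; second, the discretization of the \emph{empirical} side is not simply a consequence of $\|p\|_\infty<\infty$ --- one must separately control the maximal number of data points falling in any of the $O(n^{2c})$ tiny symmetric-difference intervals, which requires its own concentration argument. Two cosmetic differences that work in your favor: your direct test at $t=t_n(x)(1\pm\epsilon)$ with $\epsilon\propto t_n(x)$, using monotonicity of $t\mapsto t^2\wh\P_\sX^n([x\pm t])$, is a bit cleaner than the paper's proof by contradiction, and by keeping the $t_n(x)$ factor rather than coarsening it to $1/\sqrt{\log n}$ you get the sharper $|\wh t_n(x)/t_n(x)-1|=O(t_n(x))$; your handling of the finitely many small $n$ via $\wh t_n\le 1$ and $\inf_x t_n(x)\ge(\log n/(n\|p\|_\infty))^{1/3}$ matches the paper's.
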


The result implies that for any $\varepsilon>0$ and all sufficiently large $n,$ we have $(1-\varepsilon)t_n(x)\leq \wh t_n(x)\leq (1+\varepsilon)t_n(x)$ for all $x\in [0,1].$

\section{Local doubling property and examples of local rates}
\label{sec.examples}

By Definition \ref{def:local_doubling}, $\mP_n(D)$ is the class of all locally doubling distributions, and $\mP_G(D)$ is the class of all globally doubling distributions in $\mM$. It follows from the definitions that $\mP_G(D) \subseteq \mP_n(D)$. A converse statement is
\begin{lemma}
\label{lem.conversion_local_global}
If $\P_\sX \in \mP_n(D)$, then $\P_\sX \in \mP_G(D_n(\P_\sX))$ for a finite number $D_n(\P_\sX).$ In particular, if $\P_\sX$ does not depend on the sample size $n$, neither does $D_n(\P_\sX).$
\end{lemma}

This means that the distinction between local and global doubling is only relevant in the case where we study sequences of design distributions, such as in the setup of Corollary \ref{cor.corollary_lip}. In Example \ref{ex.example3}, a sequence $\P_\sX^n$ is constructed such that $\P_\sX^n \in \mP_n(D)$ for all $n$ and $\P_\sX^n \in \mP_G(D_n)$ necessarily requires $D_n\to \infty$ as $n\to \infty.$

Doubling is known to be a weak regularity assumption and does not even imply that $\P_\sX$ has a Lebesgue density \cite{MR245734, MR1800819}. It can, moreover, be easily verified for a wide range of distributions. All distributions with continuous Lebesgue density bounded away from zero and all densities of the form $p(x) \propto x^{\alpha}$ for $\alpha \geq 0$ are doubling.

Examples of non-doubling measures are distributions $\P_\sX$ with $\P_\sX([a, b]) = 0$ for some $0 \leq a < b\leq 1$, see Lemma 29 in Appendix C in the supplement \cite{supplement}. If a density verifies $p(x_0) = 0$ for some $x_0 \in [0, 1]$ and behaves like an inverse exponential around $x_0$, then it is not in $\mP_n$ for any constant. The density $x\mapsto x^{-2}e^{1-1/x}\mathds{1}(x\in [0,1])$ with corresponding cumulative distribution function (c.d.f.)\ $x\mapsto e^{1-1/x}\mathds{1}(x\in [0,1])$ provides an example of such a behaviour. To see this, observe that $P([-2\eta,2\eta])=e^{1-1/(2\eta)}=e^{1/(2\eta)}e^{1-1/\eta}=e^{1/(2\eta)}P([-\eta,\eta]).$ Since $e^{1/(2\eta)}\to \infty$ as $\eta\to 0,$ \eqref{eq.LDP} cannot hold.\\

We now derive explicit expressions for the local convergence rates and verify the \eqref{eq.LDP} for different design distributions by proving that $\P_\sX\in \mP_G(D)$ or $\P_\sX\in \mP_n(D)$.

\begin{example}
\label{ex:simple_distrib} 
Assume that the design density $p$ is bounded from below and above, in the sense that 
\begin{align}
    0<\ul p:=\inf_{x\in [0,1]}\leq \ol p:=\sup_{x\in [0,1]} p(x) < \infty.
    \label{eq.p_lb_ub}
\end{align}
The following result shows that in this case Theorem \ref{th.main_theorem} is applicable and the local convergence rate is $t_n(x) \asymp (\log n/n)^{1/3}.$

\begin{lemma}\label{lem.example_bounded_density}
Assume that the design distribution $\P_{\sX}$ admits a Lebesgue density satisfying \eqref{eq.p_lb_ub}. Then, $\P_{\sX}\in \mathcal{P}_G(4\ol p/\ul p)$ and for any $0\leq x\leq 1,$
\begin{align}
\label{eq.bounded_p_tn}
    \Big(\frac{\log n}{2n\ol p}\Big)^{1/3}
    \leq t_n(x) \leq \Big(\frac{\log n}{n\ul p}\Big)^{1/3}.
\end{align}
\end{lemma}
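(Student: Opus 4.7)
The plan is to address the two claims separately: the global doubling property and the sandwich bounds on the spread function. Both rely on simple comparisons using the bounds $\ul p$ and $\ol p$ together with careful bookkeeping of the geometry of intervals clipped by $[0,1]$.

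For the doubling property, I would bound the numerator and denominator of the ratio in \eqref{eq.LDP} by
\begin{align*}
    \P_{\sX}\big([x \pm 2\eta]\big) \leq \ol p\, L(x, 2\eta), \qquad \P_{\sX}\big([x \pm \eta]\big) \geq \ul p\, L(x, \eta),
\end{align*}
where $L(x,t) := |[x-t, x+t] \cap [0,1]|.$ The geometric core is the inequality $L(x, 2\eta) \leq 4\, L(x, \eta)$ for all $x \in [0,1]$ and $\eta > 0.$ A short case split settles this: for $\eta \leq 1/4$ use $L(x, 2\eta) \leq 4\eta$ together with $L(x, \eta) \geq \eta;$ for $\eta > 1/4$ use $L(x, 2\eta) \leq 1$ together with $L(x, \eta) \geq \min(\eta, 1) \geq 1/4.$ The auxiliary estimate $L(x, t) \geq t$ for $t \leq 1$ follows by distinguishing the sub-cases $x \leq t,$ $x \geq 1-t,$ and $t < x < 1-t.$ Taking the ratio yields the global doubling constant $4\ol p/\ul p.$

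For the sandwich bounds on $t_n(x),$ I would rely on monotonicity. The map $\phi(t) := t^2\, \P_{\sX}([x \pm t])$ is continuous and strictly increasing in $t \geq 0,$ so Lemma \ref{lem.existence_of_tn} implies $t_n(x) \leq t^*$ if and only if $\phi(t^*) \geq \log n / n,$ with an analogous equivalence for lower bounds. For the upper bound, set $t^* := (\log n/(n\ul p))^{1/3}.$ When $t^* \leq 1,$ the estimate $L(x, t^*) \geq t^*$ gives $\phi(t^*) \geq \ul p\,(t^*)^3 = \log n/n;$ when $t^* > 1,$ the bound is trivial since $\phi(1) = \P_\sX([0,1]) = 1 \geq \log n/n$ for $n \geq 2,$ so $t_n(x) \leq 1 < t^*.$ For the lower bound, set $t_* := (\log n/(2n\ol p))^{1/3}.$ Because $p$ is a density on $[0,1],$ we have $\ol p \geq 1$ and hence $t_* \leq (\log n/(2n))^{1/3} \leq 1$ for $n \geq 2;$ then $L(x, t_*) \leq 2 t_*$ gives $\phi(t_*) \leq 2\ol p\,(t_*)^3 = \log n/n,$ so $t_n(x) \geq t_*.$

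I do not anticipate a substantive obstacle. The only technical subtlety is the careful handling of boundary clipping when $x$ is close to $0$ or $1,$ which affects only the elementary bounds $t \leq L(x,t) \leq 2t;$ once these are in hand, everything reduces to plugging $t^*$ and $t_*$ into $\phi$ and invoking the monotonicity characterisation from Lemma \ref{lem.existence_of_tn}.
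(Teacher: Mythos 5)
Your proof is correct and follows essentially the same argument as the paper: both the doubling constant and the sandwich bounds reduce to $\ul p \min(t,1) \leq \P_\sX([x\pm t]) \leq \ol p \min(2t,1)$. The only cosmetic difference is that you invoke monotonicity of $t \mapsto t^2\P_\sX([x\pm t])$ to invert after plugging in candidate values $t^*$ and $t_*$, while the paper substitutes $t = t_n(x)$ into the defining equation and rearranges directly.
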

\end{example}

As a second example, we consider densities that vanish at $x=0.$

\begin{example}[Density vanishing with polynomial speed at zero]
\label{ex.example2}
Assume that, for some $\alpha>0,$ the design distribution $\P_\sX$ has Lebesgue density 
\begin{align*}
    p(x) = (\alpha + 1)x^{\alpha + 1}\mathds{1}(x \in [0,1]).
\end{align*}
This means that there is a low-density regime near zero with rather few observed design points. In this regime, it is more difficult to estimate the regression function, which is reflected in a slower local convergence rate.

\begin{lemma}
\label{lem.example_growing_density}
If $n>9, \alpha>0$ and $a_n := (\log n/n2^{\alpha + 1})^{1/(\alpha + 3)},$ then, the distribution with density $p \colon x \mapsto (\alpha + 1)x^{\alpha}\mathds{1}(x \in [0, 1])$ is in $\mP_G(D)$ for some $D$ depending only on $\alpha$. Thus, Theorem \ref{th.main_theorem} is applicable and
\begin{align}
    \label{eq.stated_bounds_first_regime}
    \Big(\frac{\log n}{2^{\alpha+1}n}\Big)^{1/(\alpha+3)} \leq t_n(x) \leq \Big(\frac{\log n}n\Big)^{1/(\alpha + 3)}, \quad \text{for } 0 \leq x \leq a_n,
\end{align}
and 
\begin{align}
    \label{eq.stated_bounds_second_regime}
    \Big(\frac{\log n}{2^{\alpha+1}(\alpha+1)nx^{\alpha}}\Big)^{1/3} \leq t_n(x) \leq \Big(\frac{\log n}{nx^{\alpha}}\Big)^{1/3}, \quad \text{for } a_n \leq x \leq 1.
\end{align}
\end{lemma}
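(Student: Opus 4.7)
The plan is to verify (i) global doubling with a constant depending only on $\alpha$ and (ii) the claimed bounds on $t_n(x)$. For (ii) the key tool is monotonicity: since $t \mapsto t^2 \P_\sX([x \pm t])$ is increasing and $t_n(x)^2 \P_\sX([x \pm t_n(x)]) = \log n/n$, it suffices to exhibit candidate values of $t$ that bracket the right-hand side.

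For the doubling property, the CDF is $F(u) = u^{\alpha+1} \mathds{1}(u \in [0,1]) + \mathds{1}(u > 1)$ so that $\P_\sX([a,b]) = F(b) - F(a)$. I split into cases based on the positions of $x \pm \eta$ and $x \pm 2\eta$ relative to $\{0, 1\}$; the case $\eta > 1$ is trivial since both intervals then contain $[0,1]$. In the central case $x - 2\eta \geq 0$ and $x + 2\eta \leq 1$, the substitution $u = \eta/x \in (0, 1/2]$ reduces the ratio to
\[
\frac{(1+2u)^{\alpha+1} - (1-2u)^{\alpha+1}}{(1+u)^{\alpha+1} - (1-u)^{\alpha+1}},
\]
which is continuous on $(0, 1/2]$ with limit $2$ at $0$ (by Taylor expansion) and therefore bounded uniformly in $x$ and $\eta$. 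The remaining cases (where at least one of $x - 2\eta, x + 2\eta$ lies outside $[0, 1]$) are handled by direct estimates on $\int (\alpha+1) u^\alpha du$ using monotonicity of $u^\alpha$; each case produces a constant depending only on $\alpha$.

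For the regime $x \in [0, a_n]$, plugging $t = (\log n/n)^{1/(\alpha+3)} \geq a_n \geq x$ into the defining expression yields $\P_\sX([x \pm t]) \geq \P_\sX([0, t]) = t^{\alpha+1}$, hence $t^2 \P_\sX([x \pm t]) \geq t^{\alpha+3} = \log n/n$ and $t_n(x) \leq t$. Plugging $t = a_n$ gives the complementary $\P_\sX([x \pm t]) = (x+t)^{\alpha+1} \leq (2t)^{\alpha+1}$ and $t^2 \P_\sX([x \pm t]) \leq 2^{\alpha+1} a_n^{\alpha+3} = \log n/n$, forcing $t_n(x) \geq a_n$.

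For the regime $x \in [a_n, 1]$, I split on whether the candidate $t = (\log n/(n x^\alpha))^{1/3}$ exceeds $x$. If $t \geq x$ the first-regime bound applies: $\P_\sX([x \pm t]) \geq t^{\alpha+1}$ and $t^{\alpha+3} \geq t^3 x^\alpha = \log n/n$. If $t < x$, the inclusion $[x, x+t] \subseteq [x \pm t]$ gives $\P_\sX([x \pm t]) \geq \int_x^{x+t} (\alpha+1) u^\alpha du \geq (\alpha+1) t x^\alpha$, and hence $t^2 \P_\sX([x \pm t]) \geq (\alpha+1) \log n/n \geq \log n/n$; either way $t_n(x) \leq (\log n/(n x^\alpha))^{1/3}$. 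For the matching lower bound I plug in $t = (\log n/(2^{\alpha+1}(\alpha+1) n x^\alpha))^{1/3}$, verify $t \leq x$ using $x \geq a_n$ together with $(\alpha+1) \geq 1$, and bound $\P_\sX([x \pm t]) = \int_{x-t}^{x+t} (\alpha+1) u^\alpha du \leq 2 t (\alpha+1)(x+t)^\alpha \leq 2^{\alpha+1}(\alpha+1) t x^\alpha$, so that $t^2 \P_\sX([x \pm t]) \leq \log n/n$ and $t_n(x) \geq t$. The main bookkeeping obstacle is the boundary case $x + t > 1$ near $x = 1$, handled analogously by replacing $(x+t)^{\alpha+1}$ with $1$ and using the elementary bound $1 - (x-t)^{\alpha+1} \leq (\alpha+1)(1 - x + t) \leq 2(\alpha+1) t$ together with $x^\alpha \geq 2^{-\alpha}$.
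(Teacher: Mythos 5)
Your approach is a genuine alternative to the paper's. For doubling, the paper invokes the characterization in Lemma \ref{lem.char_doubling_monotonic} (monotone Lebesgue densities are doubling iff a single one-sided bound holds) and then handles the truncation to $[0,1]$; you instead reduce the central case to boundedness of the explicit ratio $u\mapsto\frac{(1+2u)^{\alpha+1}-(1-2u)^{\alpha+1}}{(1+u)^{\alpha+1}-(1-u)^{\alpha+1}}$ on $(0,1/2]$ and treat the truncated cases by hand. That is more elementary and self-contained, at the cost of more bookkeeping. For the bounds on $t_n$, the paper uses the three-regime partition $I_1, I_2, I_3$ of Remark \ref{rem:monotonicity_tn_borders}, exploits that $t_n$ is monotone on $I_1$ and $I_3$, and obtains the third-regime bound by evaluating $t_n$ at the endpoints $b_n$ and $1$; you instead bracket $t_n(x)$ pointwise by plugging candidate values of $t$ into the strictly increasing function $s\mapsto s^2\P_\sX([x\pm s])$. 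Your method does not need the derivative formula for $t_n$ or the regime partition, which is a nice simplification, but it forces you to track the boundary case $x+t>1$ yourself rather than inherit it for free from the monotonicity of $t_n$ on $I_3$.

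There is one concrete gap: your fix for the boundary case covers only the lower bound on $t_n(x)$. You write $1-(x-t)^{\alpha+1}\leq(\alpha+1)(1-x+t)\leq 2(\alpha+1)t$ and $x^\alpha\geq 2^{-\alpha}$, which is exactly what is needed to verify $t^2\P_\sX([x\pm t])\leq\log n/n$ for the candidate $t=(\log n/(2^{\alpha+1}(\alpha+1)nx^\alpha))^{1/3}$. But for the upper bound $t_n(x)\leq(\log n/(nx^\alpha))^{1/3}$ you need a \emph{lower} bound on $\P_\sX([x\pm t])$, and your displayed estimate $\P_\sX([x\pm t])\geq\int_x^{x+t}(\alpha+1)u^\alpha\,du\geq(\alpha+1)tx^\alpha$ silently assumes $x+t\leq 1$; when $x+t>1$ the integral is only over $[x,1]$ with $1-x<t$, so the bound $(\alpha+1)tx^\alpha$ is no longer justified. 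The repair is easy: since $x\leq 1$, one has $[1-t,1]\subseteq[x-t,1]$, hence $\P_\sX([x\pm t])\geq\P_\sX([1-t,1])=1-(1-t)^{\alpha+1}\geq t\geq tx^\alpha$, which recovers $t^2\P_\sX([x\pm t])\geq tx^\alpha t^2=\log n/n$. With this addition (and if you prefer a written-out case analysis in the doubling step, substituting $v=x/\eta$ and checking boundedness of the resulting one-variable ratios on compact sets) your argument is complete.
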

By rewriting the expression for the spread function, we find that the local convergence rate is $t_n(x) \asymp (\log n/(n(x\vee a_n)^\alpha))^{1/3}.$ The behavior of $t_n(0)$ can also directly be deduced from Lemma \ref{lem.wright_assumption_explored_for_tn}.
\end{example}

As a last example, we consider a sequence of design distributions with decreasing densities on $[1/4,3/4].$
\vspace{\baselineskip}

\noindent
\textbf{Example 3 (Sequence of distributions with low-density region).}\label{ex.example3} For $\phi_n=1\wedge n^{-1/4}\log n,$ consider the sequence of distributions $\P_\sX^n$ with associated Lebesgue densities
\begin{align}
    p_n(x) := \phi_n +16(1-\phi_n)\max\Big(\frac 14-x,0,x-\frac 34\Big).
    \label{eq.Ex2_pn_def}
\end{align}
\begin{wrapfigure}{r}{.45\textwidth}
    \centering
    \includegraphics[width=.4\textwidth]{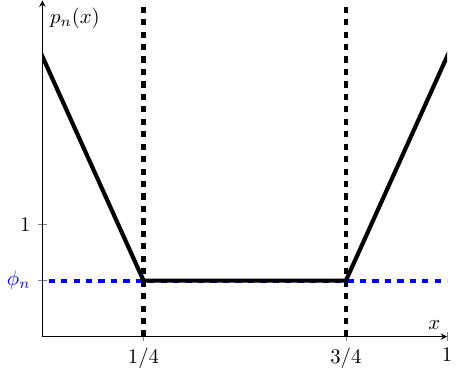}
    \caption{The density $p_n$}
    \label{fig.p_n(x)}
    \vspace{-1.2cm}
\end{wrapfigure}

It is easy to check that this defines Lebesgue densities on $[0,1]$. See Figure \ref{fig.p_n(x)} for a graph of $p_n$. According to Lemma \ref{lem.example_bounded_density}, these distributions are globally doubling. Since $\P_\sX^n([0, 1])/\P_\sX^n([1/4, 3/4]) = 1/\phi_n$, the doubling constants are $\geq 1/\phi_n$ and hence tend to infinity as $n$ grows. Therefore there is no $D>0$ such that $p_n \in \mP_G(D)$ for all $n$. On the contrary, for all $n,\ p_n \in \Lip(16)$ and since $\phi_n \geq n^{-1/4}\log n$, the assumptions of Corollary \ref{cor.corollary_lip} are satisfied with $\beta =1$ and $\kappa = 16$. Therefore, $p_n \in \mP_n(8)$ for all $n$ large enough and the local convergence rate is $(\log n/(np_n(x)))^{1/3}.$ In particular, in the regime $[1/4,3/4]$, the local convergence rate becomes $n^{-1/4}.$

\section{Proof strategy}
\label{sec.proof_strategy}

As the new proof strategy to establish local rates for least squares estimation is the main mathematical contribution of this work, we outline it here. Consider the LSE
\begin{align*}
    \wh f_n \in \argmin_{f \in \Lip(1)} \,  \sum_{i=1}^n\left(Y_i - f(X_i)\right)^2.
\end{align*}
The definition of the estimator ensures that for any $g \in \Lip(1),$ the so called \textit{basic inequality} $\sum_{i=1}^n(Y_i - \wh f_n(X_i))^2 \leq \sum_{i=1}^n (Y_i-g(X_i))^2$ holds. Assume that the function $g$ satisfies
\begin{align}
    \big(\wh f_n(X_i)-g(X_i) \big)\big(g(X_i)-f_0(X_i)\big) \geq 0, \quad \text{for all} \ i=1,\ldots,n,
    \label{eq.balanced_eqs_PS}
\end{align}
which is the same as saying that at all data points $g$ should lie between $\wh f_n$ and the true regression function $f_0.$ Together with the basic inequality and using $Y_i = f_0(X_i) + \varepsilon_i$, we obtain
\begin{align}
\begin{split}
    \sum_{i=1}^n\big(\wh f_n(X_i) - g(X_i)\big)^2 
    &\leq 
    \sum_{i=1}^n\big(\wh f_n(X_i) - f_0(X_i)\big)^2 - \sum_{i=1}^n\big(g(X_i) - f_0(X_i)\big)^2 \\
    &\leq 2\sum_{i=1}^n \varepsilon_i\big(\wh f_n(X_i) - g(X_i)\big).
\end{split}    
    \label{eq.argmin_inequality_PS}
\end{align}
We prove that $t_n(x)$ is a local convergence rate by contradiction. Assume that the LSE $\wh f_n$ is more than $Kt_n(x^*)$ away from the true regression function $f_0$ for some $x^* \in [0, 1]$ and a sufficiently large constant $K$. Then, we choose $g$ as a specific local perturbation of $\wh f_n$ (in the sense that $g$ differs from $\wh f_n$ only on a small interval) such that the previous inequality \eqref{eq.argmin_inequality_PS} is violated, resulting in the desired contradiction.

Denote the space of all possible functions $\wh f_n-g$ by $\mF^*.$ Since $\wh f_n\in \Lip(1)$ and $g \in \Lip(1),$ we have $\wh f_n-g \in \Lip(2)$ and thus, $\mF^* \subseteq \Lip(2).$ In fact, by choosing $g$ as a local perturbation of $\wh f_n,$ the function class $\mF^*$ will be much smaller than $\Lip(2).$ Due to the small support of $\wh f-g$, we have $\wh f(X_i)-g(X_i)=0$ for most $X_i.$ It is conceivable that one can remove these indices from \eqref{eq.argmin_inequality_PS} and that the effective sample size $m=m(X_1,Y_1,\dots,X_n,Y_n)$ is the number of indices for which $\wh f_n(X_i)-g(X_i)\neq 0.$ Denote by $N(r,\mF^*, \|\cdot\|_\infty)$ the covering number of $\mF^*$ with sup-norm balls of radius $r$ and assume moreover that $\mF^*$ is star-shaped, that is, if $h\in \mF^*$ and $\alpha \in [0,1],$ then also $\alpha h\in \mF^*.$ We now argue similarly as in \cite{wainwrighthighdimstat}. Replacing $f^*$ by $g$ in their inequality (13.18, p.\ 452) and then following exactly the same steps as in the proofs for their Theorem 13.1 and Corollary 13.1, one can now show that if there exists a sequence $\eta_n$ with $0\leq \eta_n \leq 1$ satisfying
\begin{align}
    \frac{16}{\sqrt{m}}
    \int_{\eta_n^2/4}^{\eta_n}
    \sqrt{\log N\big(r,\mF^*, \|\cdot\|_\infty \big)} \, dr
    \leq \frac{\eta_n^2}{4},
    \label{eq.heuristic_covering_ineq}
\end{align}
then, 
\begin{align}
    \P\Big(\frac 1m\sum_{i=1}^n \big(\wh f_n(X_i)-g(X_i)\big)^2 \geq 16 \eta_n^2\Big) \leq e^{-m\eta_n^2/2}.
    \label{eq.estim_conc_around_pert}
\end{align}
To derive a contradiction assume that there exists a point $x^*$ such that $|\wh f_n(x^*)-f_0(x^*)| > Kt_n(x^*).$ Suppose moreover, that for all $K$ large enough, we can find a function $g\in \Lip(1)$ satisfying \eqref{eq.balanced_eqs_PS} and $|\wh f_n(x^*)-g(x^*)| > Kt_n(x^*),$ and support of $\wh f_n-g$ with length of the order $Kt_n(x^*).$ The assumed properties of such a function $g$ are plausible due to $\wh f_n-g \in \Lip(2).$ Because we can also choose $K\geq 4,$ another consequence of $\wh f_n-g \in \Lip(2)$ is that $|\wh f_n(x)-g(x)| > K t_n(x^*)/2$ for all $x\in [x^*\pm t_n(x^*)].$ Thus, 
\begin{align*}
    \sum_{i=1}^n \big(\wh f_n(X_i)-g(X_i)\big)^2
    \geq \frac{K^2}4 t_n(x^*)^2
    \sum_{i=1}^n \mathds{1}\big(X_i \in [x^*\pm t_n(x^*)]\big).
\end{align*}
The right-hand side should be close to its expectation $\tfrac 14 K^2t_n(x^*)^2nP_X( [x^*\pm t_n(x^*)])= \tfrac 14 K^2 \log n,$ where we used the definition of $t_n(x^*).$ Thus, up to approximation errors, we obtain the lower bound
\begin{align}
    \sum_{i=1}^n \big(\wh f_n(X_i)-g(X_i)\big)^2
    \geq \frac{K^2}{4}\log n.
    \label{eq.heuristic_lb}
\end{align}

We now explain the choice of $\eta_n$ in \eqref{eq.estim_conc_around_pert} that leads to the upper bound for $\sum_{i=1}^n (\wh f_n(X_i)-g(X_i))^2$ in \eqref{eq.argmin_inequality_PS}. Since the perturbation is supported on an interval with length of the order $Kt_n(x^*),$ one can bound the metric entropy $\log N\big(r,\mF^*, \|\cdot\|_\infty \big)\lesssim Kt_n(x^*)/r,$ with proportionality constant independent of $K.$ Therefore, \eqref{eq.heuristic_covering_ineq} holds for $\eta_n \propto (Kt_n(x^*)\log n/m)^{1/3}.$ The additional $\log n$-factor in $\eta_n$ is necessary to obtain uniform statements in $x.$ For this choice of $\eta_n,$ the probability in \eqref{eq.estim_conc_around_pert} converges to zero. Consequently, on an event with large probability, we have that 
\begin{align}
    \sum_{i=1}^n \big(\wh f_n(X_i)-g(X_i)\big)^2 \leq 16 m \eta_n^2\lesssim \big(mK^2 t_n(x^*)^2\log^2 n\big)^{1/3}.
    \label{eq.9999}
\end{align}
Recall that the support of $\wh f_n-g$ is contained in $[x^*\pm CKt_n(x^*)]$ for some constant $C.$ Moreover, $m$ is the number of observations in the support of $\wh f_n-g.$ Now $m$ should be close to its expectation which can be upper bounded by $n\P_\sX([x^*\pm CKt_n(x^*)]).$ Invoking the local doubling property \eqref{eq.LDP}, $m$ can also be upper bounded by $C_K n\P_\sX([x^*\pm t_n(x^*)]),$ for a constant $C_K$ depending on $K.$ Using the definition of $t_n(x),$ \eqref{eq.9999} can be further bounded by
\begin{align*}
    &\sum_{i=1}^n \big(\wh f_n(X_i)-g(X_i)\big)^2
    \lesssim \Big( C_K K^2 n t_n(x^*)^2 \P_\sX([x^*\pm t_n(x^*)]) \log^2 n\Big)^{1/3}
    \leq (C_KK^2)^{1/3} \log n.
\end{align*}
Comparing this with the lower bound \eqref{eq.heuristic_lb} and dividing both sides by $\log n$, we conclude that on an event with large probability, $\tfrac 14 K^2 \lesssim (C_KK^2)^{1/3},$ where the proportionality constant does not depend on $K.$ A technical argument that links the upper and lower bound more tightly and that we omit here shows that one can even avoid the dependence of $C_K$ on $K,$ such that we finally obtain $K^2 \lesssim K^{2/3}.$ Taking $K$ large and since the proportionality constant is independent of $K,$ we finally obtain a contradiction. This means that on an event with large probability and for all sufficiently large $K$, there cannot be a point $x^*,$ such that $|\wh f_n(x^*)-f_0(x^*)|/t_n(x^*)> K,$ proving $\sup_x |\wh f_n(x)-f_0(x)|/t_n(x) \leq K$ with large probability.

\begin{wrapfigure}{r}{0.4\textwidth}
    \vspace{-\baselineskip}
    \centering
    \includegraphics[width=0.4\textwidth]{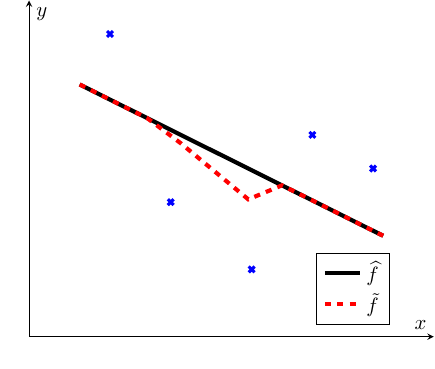}
    \vspace{-\baselineskip}
    \caption{\label{fig.1} If the LSE $\wh f$ would not have locally slope $=1$, then one could construct a perturbed version $\tilde f$ that better fits the data, implying that $\wh f$ cannot be a least squares fit.}
    \vspace{-1cm}
\end{wrapfigure}

There is still a major technical obstacle in the proof strategy, namely the choice of the local perturbation $g$. This construction appears to be one of the main difficulties of the proof. In fact, the empirical risk minimizer over $1$-Lipschitz functions will typically lie somehow on the boundary of the space $\Lip(1)$ in the sense that on small neighbourhoods, the Lipschitz constant of the estimator is exactly one.

To see this, assume the statement would be false. Then we could build tiny perturbations around the estimator that are $1$-Lipschitz and lead to a smaller least squares loss, which contradicts the fact that the original estimator is a least squares minimizer (see Figure \ref{fig.1}). This makes it tricky to construct a local perturbation $g$ of $\wh f$ that also lies in $\Lip(1)$ and satisfies the required conditions. To find a suitable perturbation, our approach is to introduce first $x^*$ as above and then define another point $\tilde x$ in the neighbourhood of $x^*$ with some specific properties. The full construction is explained in Figure \ref{fig.perturbation} and Lemma \ref{lem.g_function} in Section~\ref{sec.proof}.

\section{Applications to Transfer Learning}
\label{sec:TL}

Transfer Learning (TL) aims to exploit that an estimator achieving good performance on a certain task should also work well on similar tasks. This allows to emulate a bigger dataset and to save computational time by relying on previously trained models. In the supervised learning framework, we have access to training data generated from a distribution $\Q_{\sX,\sY}.$ Observing $X$ from a pair $(X,Y)\sim \Q_{\sX,\sY}$ we want to predict the corresponding value of $Y.$ To do so, we compute an estimate based on observing $m$ i.i.d.\ copies sampled from $\Q_{\sX, \sY}$. Assume now that we also have access to $n > m$ i.i.d.\ copies sampled from another distribution $\P_{\sX, \sY}$. The transfer learning paradigm states that, depending on some similarity criterion between $\P_\sX$ and $\Q_\sX$, fitting an estimator using both samples improves the predictive power. In other words, $\P_{\sX, \sY}$ contains information about $\Q_{\sX, \sY}$ that can be transferred to improve the fit. Two standard settings within TL are posterior drift and covariate shift. For posterior drift, one assumes that the marginal distributions are the same, that is, $\P_\sX = \Q_\sX,$ but $\P_{\sY|\sX}$ and $\Q_{\sY|\sX}$ may be different. On the contrary, TL with covariate shift assumes that $\P_{\sY|\sX} = \Q_{\sY|\sX},$ while $\P_\sX$ and $\Q_\sX$ can differ. Here, we address the covariate shift paradigm within the nonparametric regression framework. This means, we observe $n+m$ independent pairs $(X_1,Y_1),\ldots,(X_{n+m},Y_{n+m})\in [0,1]\times \mathbb{R}$ with 
\begin{align}
    X_i&\sim \P_\sX, \quad\quad\quad\quad \ \ \text{for} \ i=1,\ldots,n, \notag \\ 
    X_i&\sim \Q_\sX, \quad\quad\quad\quad \ \text{for} \  i=n+1,\ldots,n+m, \label{eq.nonp_regr_covariate_shift}\\
    Y_i&=f_0(X_i)+\varepsilon_i, \quad \, \text{for} \  i=1,\ldots,n+m,\notag
\end{align}
and independent noise variables $\varepsilon_1,\ldots,\varepsilon_{n+m}\sim \mathcal{N}(0,1).$

We now discuss estimation in this model, treating the cases $m=0$ and $m>0,$ separately. In both cases, the risk is the prediction error under the target distribution. For readability, we omit the subscript $\sX$ and write $\P$ and $\Q$ for $\P_\sX$ and $\Q_\sX$, respectively. Throughout the section, we assume global doubling, that is, $\P, \Q \in \mP_G(D)$ for some $D \geq 2.$

\subsection{Using LSE from source distribution to predict under target distribution}

As before, let $q$ denote the density of the target design distribution $\Q.$ Recall that we are considering the covariate shift model \eqref{eq.nonp_regr_covariate_shift} with $m=0$ and Lipschitz continuous regression functions. If the $n$ training data were generated from the target distribution $\Q_{\sX,\sY}$ instead, the classical empirical risk theory would lead to the standard nonparametric rate $n^{-2\beta/(2\beta+1)}$ with $\beta=1.$ More precisely, the statement would be that with probability tending to one as $n\to \infty,$ $\int_0^1 ( \wh f_n(x)-f_0(x))^2 q(x) \, dx \lesssim n^{-2/3}.$ 

For $n$ training samples from the source distribution $\P_{\sX,\sY}$, Theorem \ref{th.main_theorem} shows that $|\wh f_n(x)-f_0(x)|\leq K t_n^{\P}(x)$ for all $x,$ with high probability, where $t_n^{\P}$ denotes the spread function associated to the distribution $\P.$ This means that the prediction risk under the target marginal distribution $\Q_{\sX}$ with density $q$ is bounded by
\begin{align}
    \int_0^1 \big( \wh f_n(x)-f_0(x)\big)^2 q(x) \, dx
    \leq K^2 \int_0^1 t_n^{\P}(x)^2 q(x) \, dx,
    \label{eq.TL_1}
\end{align}
with probability converging to one as $n\to \infty.$ For a given source density $p,$ the main question is whether the right-hand side is of the order $n^{-2/3}$, up to $\log n$-factors. This would imply that there is no loss in terms of convergence rate (ignoring $\log n$-factors) due to the different sampling scheme. To get at least close to the $n^{-2/3}$-rate, some conditions on $p$ are needed. If $p$ is, for instance, zero on $[0,1/2],$ we have no information about the regression function $f$ on this interval and any estimator will be inconsistent on $[0,1/2]$. If we then try to predict with $\Q$ the uniform distribution, it is clear that $\int_0^1 ( \wh f_n(x)-f_0(x))^2 \, dx\geq \int_0^{1/2} ( \wh f_n(x)-f_0(x))^2 \, dx \gtrsim 1.$  

In the setting of sample size dependent densities $p_n$, Corollary \ref{cor.corollary_lip} shows that under the imposed conditions, there exists a constant $K'$ that does not depend on $n,$ such that
\begin{align*}
    \int_0^1 \big( \wh f_n(x)-f_0(x)\big)^2 q(x) \, dx
    \leq (K')^2 \Big(\frac{\log n}{n}\Big)^{2/3} \int_0^1 \frac{q(x)}{p_n(x)^{2/3}} \, dx,
\end{align*}
with probability tending to one as $n\to \infty.$ For instance, for the sequence of densities $p_n(x) := \phi_n(x) +16(1-\phi_n)\max(1/4-x,0,x-3/4)$ with $\phi_n=1\wedge n^{-1/4}\log(n),$ as considered in
\eqref{eq.Ex2_pn_def}, the right-hand side in the previous display is of the order $(\log n/n)^{2/3}\phi_n^{-2/3}\leq n^{-1/2}.$

For distributions satisfying the conditions of Theorem \ref{th.main_theorem}, we need to bound the more abstract integral $\int_0^1 t_n^{\P}(x)^2q(x) \, dx.$ The next result provides a different, sometimes simpler formulation. 

\begin{lemma}\label{lem.rate_rewritten}
In the same setting and for the same conditions as in Theorem \ref{th.main_theorem}, there exists a constant $K'',$ such that
\begin{align*}
    \int_0^1 \big( \wh f_n(x)-f_0(x)\big)^2 q(x) \, dx
    &\leq K'' \frac{\log n}{n} \int_0^1 \frac{\Q([x\pm t_n^{\P}(x)])}{t_n^{\P}(x)\P([x\pm t_n^{\P}(x)])}  \, dx \\
    &\leq 2^{1/3} K''\Big(\frac{\log n}{n}\Big)^{2/3}\|p\|_\infty^{1/3} \int_0^1 \frac{\Q([x\pm t_n^{\P}(x)])}{\P([x\pm t_n^{\P}(x)])}  \, dx,
\end{align*}
with probability tending to one as $n\to \infty.$
\end{lemma}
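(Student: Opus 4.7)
The plan is to chain three reductions. First, I would apply Theorem~\ref{th.main_theorem} to bound the random integrand pointwise by $K^2 t_n^\P(x)^2$ on an event of probability tending to $1$; then rewrite the resulting quantity via the defining equation $t_n^\P(x)^2 \P([x\pm t_n^\P(x)])=\log n/n$ to get $K^2(\log n/n)\int_0^1 q(x)/\P([x\pm t_n^\P(x)])\,dx$; and finally use Fubini together with the local doubling property to replace the pointwise density $q(x)$ by the averaged quantity $\Q([x\pm t_n^\P(x)])/t_n^\P(x)$. The second inequality is then obtained from the uniform bound $\|p\|_\infty$.

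The central step is the comparison
\[
\int_0^1 \frac{q(u)}{\P([u\pm t_n^\P(u)])} \, du \, \lesssim \, \int_0^1 \frac{\Q([x\pm t_n^\P(x)])}{t_n^\P(x)\,\P([x\pm t_n^\P(x)])} \, dx.
\]
Expanding $\Q([x\pm t_n^\P(x)])=\int \mathbf{1}(|u-x|\leq t_n^\P(x))q(u)\,du$ and swapping the order of integration reduces the claim to showing
\[
J(u):=\int_0^1 \frac{\mathbf{1}(|u-x|\leq t_n^\P(x))}{t_n^\P(x)\,\P([x\pm t_n^\P(x)])}\, dx \; \gtrsim \; \frac{1}{\P([u\pm t_n^\P(u)])}.
\]
This would follow from a local comparability claim: for every $x$ with $|x-u|\leq t_n^\P(u)/(2\sqrt D)$,
\[
\frac{t_n^\P(u)}{2\sqrt D}\leq t_n^\P(x)\leq \sqrt D\, t_n^\P(u), \qquad \P([x\pm t_n^\P(x)])\leq C_D\,\P([u\pm t_n^\P(u)])
\]
for a constant $C_D$ depending only on $D$. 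Granting this, the indicator equals $1$ on the window (since $|u-x|\leq t_n^\P(u)/(2\sqrt D)\leq t_n^\P(x)$), the integrand is $\gtrsim 1/(t_n^\P(u)\,\P([u\pm t_n^\P(u)]))$, and integrating over the intersection of the window with $[0,1]$, whose length is at least $t_n^\P(u)/(2\sqrt D)$ even at the boundary, yields the required lower bound on $J(u)$.

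The second inequality in the statement is then immediate: the elementary estimate $\P([x\pm t_n^\P(x)])\leq 2 t_n^\P(x)\|p\|_\infty$ combined with the defining equation yields $t_n^\P(x)\geq (\log n/(2n\|p\|_\infty))^{1/3}$, so $1/t_n^\P(x)\leq 2^{1/3}(n/\log n)^{1/3}\|p\|_\infty^{1/3}$; factoring this bound out under the integral produces the required prefactor $2^{1/3}(\log n/n)^{2/3}\|p\|_\infty^{1/3}$.

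The main obstacle is the local comparability claim. The doubling property is phrased in terms of a length parameter $\eta$, not in terms of the spread function, so the argument must bootstrap between the two via the defining equation. For instance, to show $t_n^\P(x)\leq \sqrt D\, t_n^\P(u)$ when $|x-u|\leq t_n^\P(u)$, I would observe that in the regime $t_n^\P(x)\geq t_n^\P(u)$ the inclusion $[u\pm t_n^\P(u)]\subseteq [x\pm 2t_n^\P(x)]$ combined with a single doubling step gives $\P([u\pm t_n^\P(u)])\leq D\,\P([x\pm t_n^\P(x)])$, and plugging into the two defining equations produces $t_n^\P(x)^2\leq D\,t_n^\P(u)^2$. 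The matching lower bound is derived analogously: if $t_n^\P(x)<t_n^\P(u)/(2\sqrt D)$, then the containment $[x\pm t_n^\P(x)]\subseteq [u\pm t_n^\P(u)]$ forces $t_n^\P(x)^2\,\P([x\pm t_n^\P(x)])< (\log n/n)/(4D)$, contradicting the defining equation. The bound on the normalizing probability comes from a fixed number of doubling steps applied to the inclusion $[x\pm t_n^\P(x)]\subseteq [u\pm (\sqrt D+1/(2\sqrt D))t_n^\P(u)]$, each step being legitimate for $n$ large enough thanks to Lemma~\ref{lem.big_prob}.
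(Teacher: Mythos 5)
Your proof is correct, but it takes a genuinely different route from the paper's. The difference is in which side of the defining equation $t_n^{\P}(x)^2\,\P([x\pm t_n^{\P}(x)])=\log n/n$ is kept in play while running the Fubini step. The paper first proves the purely geometric inequality
\begin{align*}
\int_0^1 t_n^{\P}(y)^2\, q(y)\,dy \;\leq\; 4\int_0^1 t_n^{\P}(x)\,\Q\big([x\pm t_n^{\P}(x)]\big)\,dx,
\end{align*}
by expanding $t_n^{\P}(y)\leq 2\int_{[y\pm t_n^{\P}(y)/2]\cap[0,1]}dx$, swapping, and exploiting only that $t_n^{\P}$ is $1$-Lipschitz (Lemma~\ref{lem.tn_diff}$(i)$): on the window $|x-y|\leq t_n^{\P}(y)/2$ one already gets $t_n^{\P}(y)/2\leq t_n^{\P}(x)\leq 3t_n^{\P}(y)/2$, hence both $y\in[x\pm t_n^{\P}(x)]$ and $t_n^{\P}(y)\leq 2t_n^{\P}(x)$. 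The substitution $t_n^{\P}(x)=\tfrac{\log n}{n}\cdot\tfrac{1}{t_n^{\P}(x)\P([x\pm t_n^{\P}(x)])}$ happens only at the very end, so no comparison of $\P$-masses at nearby points is ever needed. You instead substitute first, landing on $\int q(u)/\P([u\pm t_n^{\P}(u)])\,du$, and must then verify $J(u)\gtrsim 1/\P([u\pm t_n^{\P}(u)])$, which forces a comparison of $\P([x\pm t_n^{\P}(x)])$ with $\P([u\pm t_n^{\P}(u)])$ for $|x-u|\lesssim t_n^{\P}(u)$; this is where the doubling hypothesis enters. Your bootstrap between doubling and the defining equation is correct: the lower bound $t_n^{\P}(x)\geq t_n^{\P}(u)/(2\sqrt D)$ is in fact doubling-free (pure containment plus the defining equation), the upper bound needs one doubling step, and the $\P$-mass comparison a bounded number, each legitimate for $n$ large since $\sqrt D + 1/(2\sqrt D)\leq\sqrt{\log n}$ eventually. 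So the argument goes through, at the cost of an extra $D$-dependent factor in $K''$. Two remarks: the paper's ordering is the cleaner one because the Fubini step never touches $\P$-masses, so the $1$-Lipschitz property of $t_n^{\P}$ is the only regularity input beyond Theorem~\ref{th.main_theorem} itself; and you could have obtained your spread-function comparability directly from that Lipschitz property too, so it is really only the $\P$-mass estimate inside $J(u)$ that genuinely requires doubling in your route.
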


In \cite{kpotufe2021marginal}, a pair $(\P, \Q)$ is said to have transfer exponent $\gamma,$  if there exists a constant $0<C\leq 1,$ such that for all $0\leq x\leq 1$ and $0<\eta\leq 1,$ we have $\P([x \pm \eta]) \geq C\eta^\gamma\Q([x \pm \eta]).$ Combined with the previous lemma, we get for transfer exponent $\gamma,$
\begin{align*}
    \int_0^1 \big( \wh f_n(x)-f_0(x)\big)^2 q(x) \, dx
    &\leq 2^{1/3} \frac{K''}C \Big(\frac{\log n}{n}\Big)^{2/3}\|p\|_\infty^{1/3} \int_0^1 t_n^{\P}(x)^{-\gamma}  \, dx,
\end{align*}
with probability tending to one as $n\to \infty.$ Interestingly, the right-hand side does not depend on the target distribution $\Q$.

The next lemma provides an example of convergence rates.

\begin{lemma}\label{lem.TL_1}
Work in the nonparametric covariate shift model \eqref{eq.nonp_regr_covariate_shift} with $m=0.$ Let $\alpha>0.$ For source design density $p(x)=(\alpha+1)x^\alpha \mathds{1}(x\in [0,1])$ and uniform target design density $q(x)=\mathds{1}(x\in [0,1])$,
we have that 
\begin{align}\label{eq.main_ex_simple_TL}
    \int_0^1 \Big( \wh f_n(x)-f_0(x)\Big)^2 q(x) \, dx \lesssim (\log n)^{\mathds{1}(\alpha = 3/2)}\bigg[\Big(\frac{\log n}n\Big)^{3/(3 + \alpha)}\vee \Big(\frac{\log n}n\Big)^{2/3}\bigg]
\end{align}
with probability tending to one as $n\to \infty.$
\end{lemma}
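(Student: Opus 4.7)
My starting point is the covariate-shift bound in \eqref{eq.TL_1}. Since $p(x)=(\alpha+1)x^\alpha\mathbf{1}(x\in[0,1])$ is a globally doubling density (cf.\ Lemma \ref{lem.example_growing_density}), $\P_\sX\in \mP_G(D)\subseteq \mP_n(D)$ for an $\alpha$-dependent constant $D\geq 2$ and Theorem \ref{th.main_theorem} applies. Hence, with probability tending to one,
\begin{align*}
    \int_0^1 \big(\wh f_n(x)-f_0(x)\big)^2 q(x)\,dx \leq K^2 \int_0^1 t_n(x)^2\,dx,
\end{align*}
because $q\equiv 1$ on $[0,1]$. The plan is then to insert the two-regime bounds on $t_n$ from Lemma \ref{lem.example_growing_density}, split the integral at $a_n=(\log n/(n 2^{\alpha+1}))^{1/(\alpha+3)}$, and carefully track the effect of $\alpha$ on the resulting integrals.

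\textbf{Low-density regime $[0,a_n]$.} In this range the bound $t_n(x)\leq (\log n/n)^{1/(\alpha+3)}$ from \eqref{eq.stated_bounds_first_regime} is uniform in $x$, so
\begin{align*}
    \int_0^{a_n} t_n(x)^2\,dx \leq a_n\Big(\tfrac{\log n}{n}\Big)^{2/(\alpha+3)} \asymp \Big(\tfrac{\log n}{n}\Big)^{3/(\alpha+3)}.
\end{align*}
This will be the dominant term in the heavy-tail regime $\alpha>3/2$.

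\textbf{Bulk regime $[a_n,1]$.} Using \eqref{eq.stated_bounds_second_regime}, one gets
\begin{align*}
    \int_{a_n}^{1} t_n(x)^2\,dx \leq \Big(\tfrac{\log n}{n}\Big)^{2/3}\int_{a_n}^{1} x^{-2\alpha/3}\,dx.
\end{align*}
The main obstacle is the case analysis for the integral $\int_{a_n}^{1}x^{-2\alpha/3}\,dx$, which splits at $2\alpha/3=1$. For $\alpha<3/2$ the integral is bounded uniformly in $n$ by $1/(1-2\alpha/3)$, yielding a contribution of order $(\log n/n)^{2/3}$. For $\alpha=3/2$, the integral equals $-\log a_n\asymp \log n$, giving an extra logarithmic factor and producing the $(\log n)^{\mathbf{1}(\alpha=3/2)}(\log n/n)^{2/3}$ term. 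For $\alpha>3/2$, the integral is of order $a_n^{1-2\alpha/3}\asymp (\log n/n)^{(3-2\alpha)/(3(\alpha+3))}$; multiplying by $(\log n/n)^{2/3}$ and simplifying the exponent $2/3+(3-2\alpha)/(3(\alpha+3))=3/(\alpha+3)$ recovers exactly the same rate as in the low-density regime.

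\textbf{Combining.} Collecting the three cases, in every case the bound is at most a constant multiple of $(\log n)^{\mathbf{1}(\alpha=3/2)}\big[(\log n/n)^{3/(3+\alpha)}\vee (\log n/n)^{2/3}\big]$, since $3/(\alpha+3)\geq 2/3$ iff $\alpha\leq 3/2$ and the maximum therefore correctly identifies which of the two terms dominates outside of the transitional value $\alpha=3/2$. I expect the only non-mechanical step to be the exponent algebra in the $\alpha>3/2$ case, where the first-regime contribution coming from the small interval $[0,a_n]$ and the second-regime contribution from integrating a non-integrable power unexpectedly match, explaining why no extra log-factor appears away from $\alpha=3/2$.
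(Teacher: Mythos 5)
Your proposal is correct and follows essentially the same route as the paper's proof: replace the integral $\int_0^1 t_n(x)^2\,dx$ by the explicit bounds from Lemma~\ref{lem.example_growing_density}, split at $a_n\asymp(\log n/n)^{1/(\alpha+3)}$, and case-split the bulk integral $\int_{a_n}^1 x^{-2\alpha/3}\,dx$ according to whether $2\alpha/3$ is less than, equal to, or greater than $1$; the exponent algebra showing $2/3+(3-2\alpha)/(3(\alpha+3))=3/(\alpha+3)$ is exactly the step the paper also relies on. The only difference is that the paper first gives a warm-up argument for $\alpha<1$ via Lemma~\ref{lem.rate_rewritten} before presenting this unified argument for all $\alpha>0$; you skip that redundant preliminary and go straight to the general case, which is perfectly fine.
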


The proof shows that the result follows for $0<\alpha\leq 1$ by a direct application of Lemma \ref{lem.rate_rewritten}. For general $\alpha>0,$ we prove the lemma by a more sophisticated analysis based on the bounds derived in Example \ref{ex.example2}.

The convergence rate is $(\log n/n)^{3/(\alpha + 3)}$ if $\alpha > 3/2$ and $(\log n/n)^{2/3}$ if $\alpha < 3/2$. For $\alpha=3/2$ an additional $\log n$-factor appears. This result shows that the low-density region near zero causes a slower convergence for $\alpha > 3/2.$

\subsection{Combining both samples to predict under the target distribution}

We now consider the nonparametric regression model under covariate shift \eqref{eq.nonp_regr_covariate_shift} with a second sample, that is, $m>0.$

In the first step, we construct an estimator combining the information from both samples. The main idea is to consider the LSEs for the first and second part of the sample and, for a given $x,$ pick the LSE with the smallest estimated local rate. For a proper definition of the estimator, some notation is required. Restricting to the first and second part of the sample, let $\wh f_n^{(1)}$ and $\wh f_m^{(2)}$ denote the corresponding LSEs taken over $1$-Lipschitz functions. Because the spread function is the local convergence rate, it is now natural to study $\wt f_{n,m}(x)=\wh f_n^{(1)}(x)\mathds{1}(t_n^{\P}(x)\leq t_m^{\Q}(x))+\wh f_m^{(2)}(x)\mathds{1}(t_n^{\P}(x)> t_m^{\Q}(x)).$ Because the spread functions are unknown, $\wt f_{n,m}(x)$ is not yet an estimator. Replacing $t_n^{\P}(x)$ and $t_m^{\Q}(x)$ by the estimators
\begin{align}
\begin{split}
    \wh t_{\sn}^{\SP}(x)&:= \inf \Big\{t: t^2\wh\P^n([x\pm t])\geq \frac{\log n}{n}\Big\}, \quad \wh\P^n([x\pm t]):=\frac{1}{n}\sum_{i=1}^n \mathds{1}(X_i\in [x\pm t]), \\
    \wh t_{\sm}^{\SQ}(x)&:= \inf \Big\{t: t^2\wh\Q^m([x\pm t])\geq \frac{\log m}{m}\Big\}, \quad \wh\Q^m([x\pm t]):=\frac{1}{m}\sum_{i=n+1}^{n+m} \mathds{1}(X_i\in [x\pm t]),
\end{split}
\end{align}
leads to the definition of our nonparametric regression estimator under covariate shift,
\begin{align}
    \wh f_{n,m}(x) 
    :=
    \wh f_n^{(1)}(x)\mathds{1}\big(\wh t_{\sn}^{\SP}(x)\leq \wh t_{\sm}^{\SQ}(x)\big)+\wh f_m^{(2)}(x)\mathds{1}(\wh t_{\sn}^{\SP}(x)> \wh t_{\sm}^{\SQ}(x)).
    \label{eq.cov_shift_nonp_est}
\end{align}

Let $p$ and $q$ be the respective Lebesgue densities of $\P$ and $\Q.$ We omit the dependence on $n,m$ and define by $\P_{\!f}$ the distribution of the data in model \eqref{eq.nonp_regr_covariate_shift}.

\begin{theorem}\label{thm.ub_cov_shift}
Consider the nonparametric regression model under covariate shift \eqref{eq.nonp_regr_covariate_shift}. Let $0<\delta<1$ and $D>0.$ If $\P,\Q \in \mP_G(D)$ and the estimator $\wh f_{n,m}$ is as in \eqref{eq.cov_shift_nonp_est}, then, for a sufficiently large constant $K,$
\begin{align*}
    \sup_{f_0\in \Lip(1-\delta)} \  \P_{\!f_0} \left(\sup_{x \in [0, 1]} \frac{|\wh f_{n,m}(x) - f_0(x)|}{t_n^{\P}(x) \wedge t_m^{\Q}(x)} > K\right) \to 0 \quad \text{as} \ n\to \infty \  \text{and} \ m\to \infty.
\end{align*}
\end{theorem}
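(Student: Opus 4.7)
The plan is to combine Theorem \ref{th.main_theorem} and Theorem \ref{thm.estimate_tn}, applied separately to each subsample, together with a short case analysis. The heavy lifting has already been done, so the argument here is essentially bookkeeping on two good events.

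First, since $\mP_G(D)\subseteq \mP_n(D)$, I would apply Theorem \ref{th.main_theorem} twice: once to $\wh f_n^{(1)}$ computed from $(X_1,Y_1),\dots,(X_n,Y_n)$ (design $\P$), and once to $\wh f_m^{(2)}$ computed from $(X_{n+1},Y_{n+1}),\dots,(X_{n+m},Y_{n+m})$ (design $\Q$). Since the two subsamples are independent and the noise is Gaussian, this yields an event $A_{n,m}$ with $\P_{\!f_0}(A_{n,m})\to 1$ uniformly over $f_0\in \Lip(1-\delta)$, on which, for some constant $K_1$ not depending on $f_0$, $n$, or $m$,
\[
    \big|\wh f_n^{(1)}(x)-f_0(x)\big|\le K_1\, t_n^{\P}(x), \qquad \big|\wh f_m^{(2)}(x)-f_0(x)\big|\le K_1\, t_m^{\Q}(x), \qquad \forall x\in [0,1].
\]
Next, I would invoke Theorem \ref{thm.estimate_tn} on each subsample (relying on the boundedness of the densities that is used implicitly in this global-doubling setup) to obtain, for any fixed $\varepsilon\in(0,1/2)$ and all $n,m$ large, an event $B_{n,m}$ of probability tending to one on which
\[
    (1-\varepsilon)t_n^{\P}(x)\le \wh t_n^{\P}(x)\le (1+\varepsilon)t_n^{\P}(x), \qquad (1-\varepsilon)t_m^{\Q}(x)\le \wh t_m^{\Q}(x)\le (1+\varepsilon)t_m^{\Q}(x),
\]
uniformly in $x$.

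Work on $A_{n,m}\cap B_{n,m}$ and fix $x\in[0,1]$. If $\wh t_n^{\P}(x)\le \wh t_m^{\Q}(x)$, then by definition $\wh f_{n,m}(x)=\wh f_n^{(1)}(x)$ and hence $|\wh f_{n,m}(x)-f_0(x)|\le K_1 t_n^{\P}(x)$. If $t_n^{\P}(x)\le t_m^{\Q}(x)$, this is exactly $K_1\bigl(t_n^{\P}(x)\wedge t_m^{\Q}(x)\bigr)$. Otherwise, the spread-function estimates give
\[
    (1-\varepsilon)t_n^{\P}(x)\le \wh t_n^{\P}(x)\le \wh t_m^{\Q}(x)\le (1+\varepsilon)t_m^{\Q}(x),
\]
so $t_n^{\P}(x)\le \tfrac{1+\varepsilon}{1-\varepsilon}t_m^{\Q}(x)$, and the error is bounded by $K_1\tfrac{1+\varepsilon}{1-\varepsilon}\bigl(t_n^{\P}(x)\wedge t_m^{\Q}(x)\bigr)$. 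The case $\wh t_n^{\P}(x)>\wh t_m^{\Q}(x)$ is symmetric. Taking $K:=K_1(1+\varepsilon)/(1-\varepsilon)$ yields the claimed uniform bound on $A_{n,m}\cap B_{n,m}$, and this event has probability tending to one, completing the proof.

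The main potential obstacle is not the case analysis but a prerequisite: Theorem \ref{thm.estimate_tn} is stated with $\|p\|_\infty<\infty$, while Theorem \ref{thm.ub_cov_shift} only assumes $\P,\Q\in \mP_G(D)$. One therefore needs to check that the global doubling hypothesis (together with the absolute continuity built into $\mM$) indeed suffices for the uniform ratio control of $\wh t_n^{\P}/t_n^{\P}$ and $\wh t_m^{\Q}/t_m^{\Q}$, or else to tighten the assumptions. Once that point is handled, the argument above goes through essentially as a two-line reduction to Theorems \ref{th.main_theorem} and \ref{thm.estimate_tn}.
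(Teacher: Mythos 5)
Your proposal is correct and follows essentially the same route as the paper's proof: apply Theorem \ref{th.main_theorem} to each subsample to obtain local-rate control of $\wh f_n^{(1)}$ and $\wh f_m^{(2)}$ in terms of $t_n^{\P}$ and $t_m^{\Q}$, apply Theorem \ref{thm.estimate_tn} to each subsample to obtain uniform multiplicative control of $\wh t_n^{\P}/t_n^{\P}$ and $\wh t_m^{\Q}/t_m^{\Q}$, and conclude with a pointwise case analysis on whether $\wh t_n^{\P}(x)\le\wh t_m^{\Q}(x)$ or not. (The paper fixes the ratio bound at $1/2$ and ends up with $K=8K'$ where you carry a general $\varepsilon$; that is only constant bookkeeping.)

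The issue you flag at the end is genuine, and it is also a gap in the paper's own proof, which invokes Theorem \ref{thm.estimate_tn} without noting that it additionally assumes $\|p\|_\infty<\infty$. Global doubling does not imply a bounded density: for instance $p(x)=\tfrac 12 x^{-1/2}\mathds{1}(x\in[0,1])$ has c.d.f.\ $\sqrt{x}$ and one checks directly that the ratio $\P_\sX([x\pm 2\eta])/\P_\sX([x\pm\eta])$ stays bounded (so $\P_\sX\in\mP_G(D)$ for some $D$), yet $\|p\|_\infty=\infty$. The bounded-density hypothesis enters the proof of Theorem \ref{thm.estimate_tn} through the bounds $\inf_x t_n(x)\ge(\log n/(n\|p\|_\infty))^{1/3}$ and $\P_\sX([x\pm t_n(x)])\le(\log n/n)^{1/3}\|p\|_\infty^{2/3}$, so it cannot simply be dropped. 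Either $\|p\|_\infty,\|q\|_\infty<\infty$ should be added to the hypotheses of Theorem \ref{thm.ub_cov_shift}, or Theorem \ref{thm.estimate_tn} would need to be re-proved under doubling alone. Your instinct to single this out as the main obstacle is exactly right; it is an omission you share with the paper, not a flaw peculiar to your argument.
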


The proof shows that to achieve the rate $t_n^{\P}(x) \wedge t_m^{\Q}(x),$ it is actually enough to estimate $t_n^{\P}(x)$ using $N$ data points $X_1,\ldots,X_N\sim \P,$ where $N$ is a sufficiently large number. Thus, instead of observing the full first dataset $(X_1,Y_1),\ldots,(X_n,Y_n)\sim \P$, the estimator only needs the LSE $\wh f_n^{(1)}$ and $N$ i.i.d. observations from the design distribution $\P.$

In the next step, we show that the rate $t_n^{\P}(x) \wedge t_m^{\Q}(x)$ is the local minimax rate. The design distributions $\P_\sX^n,\Q_\sX^n$ are allowed to depend on the sample size. The corresponding spread functions are denoted by $t_n^{\P}(x)$ and $t_m^{\Q}(x).$

\begin{theorem}
\label{th.lower_bound2}
Consider the nonparametric regression model under covariate shift \eqref{eq.nonp_regr_covariate_shift}. If $C_\infty$ is a positive constant, then there exists a positive constant $c,$ such that for any sufficiently large $n,$ and any sequences of design distribution $\P_\sX^n, \Q_\sX^n \in \mM$ with corresponding Lebesgue densities $p_n, q_n$ all upper bounded by $C_\infty,$ we have
\begin{align*}
    \inf_{\wh f_{n,m}} \, \sup_{f_0 \in \Lip(1)} \,  \P_{\! f_0}\bigg( 
    \sup_{x \in [0, 1]} \, \frac{|\wh f_{n,m}(x) - f_0(x)|}{t_n^{\P}(x) \wedge t_m^{\Q}(x)}
    \geq \frac {1}{12}\bigg) \geq c,
\end{align*}
where the infimum is taken over all estimators and $P_{f_0}$ is the distribution of the data in model \eqref{eq.nonp_regr_covariate_shift}.
\end{theorem}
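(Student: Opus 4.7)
The proof plan parallels the multi-hypothesis Fano argument for Theorem \ref{th.lower_bound}: construct a finite family of Lipschitz hypotheses that are widely separated in the weighted sup-norm, bound their pairwise Kullback--Leibler divergences, and conclude via Fano's inequality. The new ingredient for the covariate shift model \eqref{eq.nonp_regr_covariate_shift} is that under two regression functions $f, g$ the KL divergence between the joint laws of the data factorizes additively across the two independent samples,
\begin{align*}
\KL\bigl(\P_{\!f}\,\big\|\,\P_{\!g}\bigr) \;=\; \tfrac{n}{2}\int_0^1 (f - g)^2 p\,dx \;+\; \tfrac{m}{2}\int_0^1 (f - g)^2 q\,dx,
\end{align*}
and that the defining identities $t_n^{\P}(x)^2\P([x \pm t_n^{\P}(x)]) = \log n/n$ and $t_m^{\Q}(x)^2\Q([x\pm t_m^{\Q}(x)]) = \log m/m$ can be invoked in parallel because $T(x) := t_n^{\P}(x) \wedge t_m^{\Q}(x)$ satisfies both $T(x) \leq t_n^{\P}(x)$ and $T(x) \leq t_m^{\Q}(x)$.

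\textbf{Hypothesis construction and KL bound.} I would pick non-overlapping centers $x_1 < \dots < x_K \in [0,1]$ so that the intervals $[x_k \pm T(x_k)]$ are pairwise disjoint, and set $f_k(x) := c\bigl(T(x_k) - |x - x_k|\bigr)_+$ together with $f_0 \equiv 0$, where $c \in (0,1]$ is a small absolute constant. Each hat function lies in $\Lip(c) \subseteq \Lip(1)$, and any pair satisfies the normalized sup-norm separation
\begin{align*}
\sup_{x \in [0,1]} \frac{|f_i(x) - f_j(x)|}{T(x)} \;\geq\; c,
\end{align*}
attained at $x_i$ or $x_j$. Using $T(x_k) \leq t_n^{\P}(x_k)$ and monotonicity of $\eta \mapsto \eta^2\P([x_k\pm\eta])$,
\begin{align*}
\int f_k^2\,p\,dx \;\leq\; c^2 T(x_k)^2 \P([x_k \pm T(x_k)]) \;\leq\; c^2 t_n^{\P}(x_k)^2 \P([x_k \pm t_n^{\P}(x_k)]) \;=\; c^2\,\tfrac{\log n}{n},
\end{align*}
and analogously $\int f_k^2\,q\,dx \leq c^2 \log m/m$. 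Since the bumps of $f_i$ and $f_j$ have disjoint supports, summing the contributions yields the pairwise bound $\KL(\P_{\!f_i}\|\P_{\!f_j}) \leq c^2(\log n + \log m)$, which in turn bounds the mutual information between a uniformly random index $V \in \{0, 1, \dots, K\}$ and the data.

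\textbf{Fano conclusion and main obstacle.} Fano's inequality then yields
\begin{align*}
\inf_{\hat V}\P(\hat V \neq V) \;\geq\; 1 \;-\; \frac{c^2(\log n + \log m) + \log 2}{\log(K+1)}.
\end{align*}
The hypothesis $\|p\|_\infty,\|q\|_\infty \leq C_\infty$ forces $t_n^{\P}(x) \geq c_1(\log n/n)^{1/3}$ and $t_m^{\Q}(x) \geq c_1(\log m/m)^{1/3}$ on the sub-region of $[0,1]$ where the densities are bounded below by a fixed positive constant, and a packing argument on this region produces $K$ with $\log K$ of order $\log(n\wedge m)$; this is enough to absorb the right-hand side above for a sufficiently small universal $c$. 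Since the event $\{\hat V \neq V\}$ implies a normalized sup-norm error of at least $c/2$, tuning $c = 1/6$ (say) delivers the explicit constant $1/12$. The principal technical obstacle is that producing enough well-separated bumps must be done uniformly over all $(\P, \Q)$ with densities bounded by $C_\infty,$ including the degenerate regime where $T$ is of constant order on most of $[0,1]$; in such regimes the packing bound degrades, but the claimed rate becomes essentially vacuous and the lower bound follows from an elementary two-point Le Cam argument with two constant hypotheses. Combining the two regimes into a single proof with the explicit constant $1/12$ requires careful bookkeeping.
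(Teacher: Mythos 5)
Your outline follows the paper's route in broad terms: the additive KL identity (the paper's \eqref{eq.KL_basic_id}), hat functions at spread-function scale with disjoint supports, and a multi-hypothesis argument (the paper invokes Tsybakov's Theorem 2.5, interchangeable with Fano). The per-hypothesis KL bound $\tfrac{c^2}{2}(\log n+\log m)$ via $T(x_k)\leq t_n^{\P}(x_k)\wedge t_m^{\Q}(x_k)$ together with monotonicity of $\eta\mapsto\eta^2\P([x_k\pm\eta])$, and the $c$-separation in the weighted sup norm, are both computed correctly.

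The gap is the packing step, which you flag yourself but handle incorrectly. First, the ``degenerate regime where $T$ is of constant order on most of $[0,1]$'' cannot hurt you: an averaging bound using only $\int p_n=1$ and $\|p_n\|_\infty\leq C_\infty$ shows that the set $\{x:\P([x\pm\psi_n])\geq\psi_n/2\}$ has Lebesgue measure at least $1/(4C_\infty)$ for $\psi_n=(2\log n/n)^{1/3}$, and on this set $t_n^{\P}(x)\leq\psi_n$, hence $T(x)\leq\psi_n$ there; so there is always a fixed-measure set on which $T$ decays at the nominal rate. Second, your fallback ``elementary two-point Le Cam argument with two constant hypotheses'' cannot work: because $T$ does decay somewhere, $\sup_x 1/T(x)\gtrsim (n/\log n)^{1/3}\wedge(m/\log m)^{1/3}$, so any constant pair achieving separation $\geq 1/6$ in the $1/T$-weighted sup norm must have $|f_1-f_0|\gtrsim\inf_x T(x)\gtrsim(\log N/N)^{1/3}$ with $N=n+m$, which gives $\KL\gtrsim N\cdot(\log N/N)^{2/3}\to\infty$ and kills the two-point test. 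Localized bumps are therefore needed in every regime, and your sentence ``a packing argument on this region produces $K$ with $\log K$ of order $\log(n\wedge m)$'' is neither precise nor justified.

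The paper's actual fix, which you did not hit upon, is to count intervals under the mixture $\ol\P:=\tfrac nN\P+\tfrac mN\Q$. It partitions $[0,1]$ into $M_N\asymp(N/\log N)^{1/3}$ disjoint intervals $I_k$ of length $2\psi_N$ with $\psi_N=(\log N/N)^{1/3}$; an averaging argument using only the $C_\infty$ bound shows a constant fraction of them satisfies $\ol\P(I_k)\geq\psi_N$, and on such an interval a short contradiction (comparing $T(x_j)^2\ol\P(I_{k_j})$ against the two defining identities for $t_n^{\P}$ and $t_m^{\Q}$) forces $T(x_j)\leq\sqrt 2\,\psi_N$ at the centre $x_j$, so the bump at $x_j$ stays inside $I_{k_j}$ and all bump supports are automatically disjoint. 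This yields $s_N\geq N^{1/4}$ hypotheses with $\log s_N\geq\tfrac14\log N$ uniformly over all admissible $(\P,\Q)$, with no case analysis on the relative sizes of $n$ and $m$, and that is precisely the ingredient your write-up is missing.
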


Given the full dataset in model \eqref{eq.nonp_regr_covariate_shift}, an alternative procedure is to use the LSE over all data, that is, 
\begin{align*}
    \wh f_{n+m}\in \argmin_{f\in \Lip(1)}\sum_{i=1}^{n+m}\big(Y_i-f(X_i)\big)^2.
\end{align*}
Instead of analyzing this estimator in model \eqref{eq.nonp_regr_covariate_shift}, the risk can rather easily be controlled in the related model, where we observe $n+m$ i.i.d.\ observations $(X_1,Y_1),\ldots,(X_{n+m},Y_{n+m})$ with $X_i$ drawn from the mixture distribution $\wt{\P} := \tfrac m{m+n}\Q + \tfrac n{m+n} \P$ and $Y_i=f_0(X_i)+\varepsilon_i.$ In this model, we draw in average $n$ observations from $\P$ and $m$ observations from $\Q.$ Since $\mP_G(D)$ is convex, Theorem \ref{th.main_theorem} applies and, consequently, $t_{n+m}^{\wt \P}(x)$ is a local convergence rate. The spread function can be bounded as follows.

\begin{lemma}\label{lem.t_n_comparison_transfer} If $\wt \P:= \tfrac{n}{n+m}\P + \tfrac{m}{n+m}\Q,$ then,
\begin{align*}
     t_{n + m}^{\wt \P}(x) &\leq t_n^{\P}(x)\sqrt{\tfrac{\log(m+n)}{\log n}}
     \wedge 
     t_m^{\Q}(x)\sqrt{\tfrac{\log(m+n)}{\log m}}.
\end{align*}
If there are positive constants $C,\kappa$ such that $\sup_x t_n^{\P}(x)\leq Cn^{-\kappa}$ for all $n,$ then, there exists a constant $C',$ such that 
\begin{align*}
    t_{n + m}^{\wt \P}(x)&\leq C'\big(t_n^{\P}(x)\wedge t_m^{\Q}(x)\big),
    \quad \text{for all} \ n\geq m>2.
\end{align*}
\end{lemma}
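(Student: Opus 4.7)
Both inequalities rest on the observation that $s\mapsto s^{2}\wt\P([x\pm s])$ is continuous and non-decreasing on $[0,\infty)$ and, since $\wt\P\in\mM$, Lemma \ref{lem.existence_of_tn} says $t_{n+m}^{\wt\P}(x)$ is the unique solution of $t^{2}\wt\P([x\pm t])=\log(n+m)/(n+m)$; hence any $s$ for which $s^{2}\wt\P([x\pm s])\geq\log(n+m)/(n+m)$ must satisfy $s\geq t_{n+m}^{\wt\P}(x)$. For the first inequality I take $s:=t_n^{\P}(x)\sqrt{\log(n+m)/\log n}\geq t_n^{\P}(x)$ and combine $\wt\P\geq\tfrac{n}{n+m}\P$ with the same monotonicity applied to $\P$ to get
\begin{equation*}
s^{2}\wt\P([x\pm s])\;\geq\;\tfrac{n}{n+m}\bigl(s/t_n^{\P}(x)\bigr)^{2}\,t_n^{\P}(x)^{2}\P([x\pm t_n^{\P}(x)])\;=\;\tfrac{\log(n+m)}{n+m}.
\end{equation*}
Hence $t_{n+m}^{\wt\P}(x)\leq s$, and the symmetric argument with $(\Q,m)$ in place of $(\P,n)$ yields the second bound of the first inequality.

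\textbf{Second inequality.} Assume $n\geq m>2$. From $n+m\leq 2n$ we get $\log(n+m)/\log n\leq 1+\log 2/\log n\leq 2$, so the first bound unconditionally gives $t_{n+m}^{\wt\P}(x)\leq\sqrt{2}\,t_n^{\P}(x)$; this already settles the case $t_m^{\Q}(x)\geq t_n^{\P}(x)$. In the remaining case $t_m^{\Q}(x)<t_n^{\P}(x)\leq Cn^{-\kappa}$, I must bound the ratio $\log(n+m)/\log m$ appearing in the second bound. The key auxiliary estimate is the universal lower bound $t_m^{\Q}(x)\geq\sqrt{\log m/m}$, which follows from $\Q([x\pm t_m^{\Q}(x)])\leq 1$. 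Combined with $t_m^{\Q}(x)<Cn^{-\kappa}$ it gives $\log m>2\kappa\log n-2\log C$, so for $n$ larger than a constant depending only on $C,\kappa$ one has $\log m\geq\kappa\log n$ and therefore $\log(n+m)/\log m\leq\log(2n)/(\kappa\log n)\leq 2/\kappa$. The second bound of the first inequality then yields $t_{n+m}^{\wt\P}(x)\leq\sqrt{2/\kappa}\,t_m^{\Q}(x)$. For the remaining finite range of $n$, the ratio $\log(n+m)/\log m$ is trivially bounded, so the conclusion follows upon taking $C'$ to be the maximum of the constants obtained in each case.

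\textbf{Main obstacle.} The only delicate step is the last case. Without further information, $\log(n+m)/\log m$ can be arbitrarily large when $m$ is much smaller than $n$ (for instance, $m$ fixed and $n\to\infty$), so neither of the two bounds from the first part of the lemma, used naively, yields a constant. The polynomial decay assumption enters precisely to rule out this pathology: together with the trivial lower bound $t_m^{\Q}(x)\geq\sqrt{\log m/m}$, it forces $m\geq c\,n^{2\kappa}$ in the very regime ($t_m^{\Q}(x)<t_n^{\P}(x)$) where the $\Q$-based bound is actually needed, making $\log m$ and $\log n$ comparable and closing the argument.
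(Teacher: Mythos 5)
Your proof is correct and follows essentially the same route as the paper's: the first inequality is a direct check of the monotonicity/uniqueness argument (the paper phrases it as a contradiction, but the content is the same), and the second inequality rests on the same two ingredients — the universal lower bound $t_m^{\Q}(x)\ge\sqrt{\log m/m}$ and the hypothesis $t_n^{\P}(x)\le Cn^{-\kappa}$, which together force $m$ to be polynomially large in $n$ and hence $\log m\gtrsim\log n$.

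The one place where your write-up is actually cleaner than the paper's is the case split in the second part. You split on $t_n^{\P}(x)\le t_m^{\Q}(x)$ versus $t_n^{\P}(x)>t_m^{\Q}(x)$, which immediately identifies which term realizes the minimum, so in each case the corresponding bound from the first inequality closes the argument directly. The paper instead splits on which of $t_m^{\Q}(x)/\sqrt{\log m}$ and $t_n^{\P}(x)/\sqrt{\log n}$ is smaller, derives $\log m\gtrsim\log n$ only in the first of those cases, and then asserts the uniform bound $\sqrt{\log(n+m)/\log m}\le C''$ and the final estimate without explicitly treating the other sub-case (in which one can have $t_m^{\Q}(x)<t_n^{\P}(x)$ while the $\P$-term is the smaller weighted one — the derivation of $\log m\gtrsim\log n$ still applies there, but the paper does not spell it out). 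Your split avoids this ambiguity. One small inaccuracy to note: from $\log m/m<C^2n^{-2\kappa}$ you write $\log m>2\kappa\log n-2\log C$; strictly you only get $\log m>\log\log m+2\kappa\log n-2\log C$ (or a variant with $\log\log 2$ using $m>2$), but the missing term is positive for $m\ge 3$ so the subsequent conclusion $\log m\ge\kappa\log n$ for $n$ large enough is unaffected.
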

One can see that the rate is at most a $\log$-factor larger than the local minimax rate $t_n^{\P}(x)\wedge t_m^{\Q}(x).$ Moreover, this additional $\log$-factor can be avoided in the relevant regime where the local rate $t_n^{\ \P_{\sX}}(x)$ decays with some polynomial rate uniformly over $[0,1].$ 

We now return to our leading example with source density $p(x)=(\alpha+1)x^{\alpha+1}\mathds{1}(x\in [0,1])$ and target density $q(x)=\mathds{1}(x\in [0,1]).$ Lemma \ref{lem.example_bounded_density} and Lemma \ref{lem.example_growing_density} show that if $\alpha>0,$ then, the assumptions of Theorem \ref{thm.ub_cov_shift} are satisfied and the local convergence rate of the combined estimator $\wh f_{n,m}$ is $t_n^{\P}(x)\wedge t_m^{\Q}(x).$ From \eqref{eq.main_ex_simple_TL}, we see that as long as $\alpha<3/2,$ the first sample is enough to achieve the rate $(\log n/n)^{2/3}.$ We therefore focus on the regime $3/2<\alpha.$

\begin{lemma}\label{lem.cov_shift_rate_ex}
Consider the nonparametric regression model under covariate shift \eqref{eq.nonp_regr_covariate_shift}. For $3/2< \alpha,$ let $\P$ be the distribution with Lebesgue density $p(x)=(\alpha+1)x^{\alpha+1}\mathds{1}(x\in [0,1])$ and $\Q$ be the uniform distribution on $[0,1]$. If $n^{3/(3+\alpha)}\log^{\alpha/(3+\alpha)} n \ll m 
\leq n,$ then, we have that for any $f_0\in \Lip(1-\delta),$ 
\begin{align}
    \int_0^1 \big(\wh f_{n,m}(x)-f_0(x)\big)^2 q(x) \, dx\lesssim \Big(\frac{\log n}{m}\Big)^{2/3}\Big(\frac{m}{n}\Big)^{1/{\alpha}},
\end{align}
with probability converging to one as $n\to \infty.$
\end{lemma}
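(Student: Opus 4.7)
The plan is to reduce the claim to an integration problem involving the two spread functions and then compute the resulting integral in closed form, splitting $[0,1]$ at the crossover point where the two local rates coincide.

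First I would apply Theorem \ref{thm.ub_cov_shift} to the pair $(\P,\Q)$. Uniform design is in $\mP_G(2)$, and by Lemma \ref{lem.example_growing_density} the density $(\alpha+1)x^{\alpha}$ is in $\mP_G(D)$ for some $D=D(\alpha)$, so the hypotheses of the theorem hold. Consequently, on an event of probability tending to one,
\begin{align*}
    \int_0^1 \big(\wh f_{n,m}(x)-f_0(x)\big)^2 q(x)\,dx \lesssim \int_0^1 \big(t_n^{\P}(x)\wedge t_m^{\Q}(x)\big)^2 \, dx,
\end{align*}
uniformly over $f_0\in \Lip(1-\delta)$, since $q \equiv 1$ on $[0,1]$. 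This reduces the proof to a deterministic computation of the right-hand side.

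Next I would insert the explicit expressions from Examples \ref{ex:simple_distrib} and \ref{ex.example2}. For the uniform target, $t_m^{\Q}(x)\asymp (\log m/m)^{1/3}$ independently of $x$. For the source density, Lemma \ref{lem.example_growing_density} gives $t_n^{\P}(x)\asymp (\log n/n)^{1/(\alpha+3)}$ on $[0,a_n]$ with $a_n\asymp (\log n/n)^{1/(\alpha+3)}$, and $t_n^{\P}(x)\asymp (\log n/(nx^{\alpha}))^{1/3}$ on $[a_n,1]$. Solving $t_n^{\P}(x^*)=t_m^{\Q}(x^*)$ on the second regime yields a crossover
\begin{align*}
    x^* \asymp \Big(\frac{m\log n}{n\log m}\Big)^{1/\alpha},
\end{align*}
and a short check shows that the assumption $m\gg n^{3/(\alpha+3)}(\log n)^{\alpha/(\alpha+3)}$ is exactly what is needed to guarantee $x^*\gg a_n$; in particular the low-density zone $[0,a_n]$ lies to the left of $x^*$ and is therefore controlled by $t_m^{\Q}$.

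I would then split the integral as $\int_0^{x^*}+\int_{x^*}^{1}$. The first piece is bounded by $x^*\cdot (\log m/m)^{2/3}\lesssim (\log n/m)^{2/3}(m/n)^{1/\alpha}$, since $\log n/\log m$ is bounded for $m\leq n$ (up to absorption into constants). The second piece is
\begin{align*}
    \Big(\frac{\log n}{n}\Big)^{2/3}\int_{x^*}^{1} x^{-2\alpha/3}\,dx.
\end{align*}
Here the hypothesis $\alpha>3/2$ is essential: the exponent $2\alpha/3>1$ makes the integral diverge at $0$, so it is dominated by its lower endpoint and equals a constant times $(x^*)^{1-2\alpha/3}$. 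Plugging in $x^*$ gives
\begin{align*}
    \Big(\frac{\log n}{n}\Big)^{2/3}\cdot (m/n)^{(1-2\alpha/3)/\alpha}\lesssim \Big(\frac{\log n}{m}\Big)^{2/3}\Big(\frac{m}{n}\Big)^{1/\alpha},
\end{align*}
which matches the first piece and matches the bound claimed in the lemma.

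The main obstacle is the bookkeeping at the transition points: one must (i) verify that $x^*$ falls in the regime $[a_n,1]$ where the cubic-root expression for $t_n^{\P}$ is valid — this is precisely the role of the quantitative assumption on $m$ — and (ii) track logarithmic factors carefully when comparing $\log n$ with $\log m$, since the constant-in-$x$ upper bound $t_m^{\Q}(x)\lesssim (\log m/m)^{1/3}$ is used alongside the $x$-dependent upper bound $t_n^{\P}(x)\lesssim (\log n/(nx^{\alpha}))^{1/3}$. Once these two technicalities are handled, the rest is a direct integration exploiting the divergence at $0$ of $x^{-2\alpha/3}$ for $\alpha>3/2$.
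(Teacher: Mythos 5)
Your proposal is correct and follows essentially the same route as the paper: apply Theorem \ref{thm.ub_cov_shift}, substitute the spread-function bounds from Lemmas \ref{lem.example_bounded_density} and \ref{lem.example_growing_density}, split the integral at a cut point of order $(m/n)^{1/\alpha}$ (verified to exceed $a_n$ under the stated lower bound on $m$), and exploit $\alpha>3/2$ to integrate $x^{-2\alpha/3}$. One small correction to your reasoning: in bounding the first piece you invoke that $\log n/\log m$ is bounded because $m\leq n$, which is false in general (that inequality only gives $\log n/\log m\geq 1$); what actually makes the step work is that the exponent $1/\alpha-2/3$ is negative for $\alpha>3/2$, so the extra factor $(\log n/\log m)^{1/\alpha-2/3}\leq 1$ automatically — the paper avoids this bookkeeping altogether by cutting at the simpler point $(m/n)^{1/\alpha}$ rather than at the exact crossover $x^*$.
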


We have $m\lesssim n^{3/(\alpha + 3)}(\log n)^{\alpha/(\alpha + 3)}$ if and only if $(\log n/m)^{2/3}(m/n)^{1/\alpha}\lesssim (\log n/n)^{3/(3+\alpha)}.$ Since the right-hand side is the rate obtained without a second sample in  \eqref{eq.main_ex_simple_TL}, the additional data $(X_{n+1},Y_{n+1}),\ldots,(X_{n+m},Y_{n+m})$ improve the convergence rate if $m\gg  n^{3/(3+\alpha)}(\log n)^{\alpha/(3+\alpha)}.$

\section{Discussion}
\label{sec:discussion}

\subsection{A brief review of convergence results for the least squares estimator in nonparametric regression}
\label{sec.review_LSE}

The standard strategy to derive convergence rates with respect to (empirical) $L^2$-type losses is based on empirical process theory and covering bounds. The field is well-developed, see e.g. \cite{MR1056343, van1996weak, MR1920390, MR2329442, wainwrighthighdimstat}. At the same time, it remains a topic of active research. A recent advance is to establish convergence rates of the LSE under heavy-tailed noise distributions \cite{MR3953452, 2019arXiv190902088K}.

Some convergence results are with respect to the squared Hellinger distance, see for instance \cite{MR1739079, MR1240719}. This is slightly weaker but essentially the same as convergence with respect to the prediction risk $\E[(\wh f_n(X)-f_0(X))^2].$ To see this, recall that for two probability measures $P,Q$ defined on the same measurable space, the squared Hellinger distance is defined as $H^2(P,Q)= \tfrac 12 \int (\sqrt{dP}-\sqrt{dQ})^2$ (some authors do not use the factor $1/2$). Denote by $\Q_f$ the distribution of $(X_1,Y_1)$ in the nonparametric regression model \eqref{eq.mod} with regression function $f.$ It can be shown that 
\begin{align*}
    H^2(\Q_f,\Q_g) = 1 -\int e^{-\tfrac 18 (f(x)-g(x))^2} p(x) \, dx.
\end{align*}
In view of the formula $1-e^{-u}\leq u,$ it follows that $H^2(\Q_f,\Q_g)\leq \tfrac 18 \E[(f(X)-g(X))^2].$ Thus, the squared Hellinger loss is weaker than the squared prediction loss.

Concerning estimation rates, the LSE achieves the rate $n^{-2\beta/(2\beta+d)}\vee n^{-\beta/d}$ over balls of $\beta$-smooth H\"older functions. To see this, observe that if $\mF$ denotes a H\"older ball and $\|g\|_n:=(\tfrac 1n \sum_{i=1}^n g(X_i)^2)^{1/2}$ is the empirical $L^2$ norm, the metric entropy is $\log N\big(r, \mF, \|\cdot\|_n\big)\lesssim r^{-d/\beta},$ see Corollary 2.7.2 in \cite{van1996weak}. Any solution $\delta^2$ of the inequality $\int_{\delta^2}^\delta \sqrt{\log N\big(r, \mF, \|\cdot\|_n\big)} \, dr\lesssim \delta^2 \sqrt{n}$ is then a rate for the LSE, see Corollary 13.1 in \cite{wainwrighthighdimstat}. It is now straightforward to check that this yields the convergence rate $\delta^2 \asymp n^{-2\beta/(2\beta+d)}\vee n^{-\beta/d}.$ While $n^{-2\beta/(2\beta+d)}$ is the optimal convergence rate, Theorem 4 in \cite{MR1240719} shows that for $d=1$ and a specifically designed subset of the H\"older ball with index $\beta<1/2,$ the LSE cannot achieve a faster rate than $n^{-\beta/2}$ (up to a possibly non-optimal logarithmic factor in $n$). The (sub)optimality of the LSE over H\"older balls in the non-Donsker regime $\beta <d/2$ remains an open problem. Considering shape-constrained problems, \cite{convkur} shows that for different classes of convex functions, the LSE is suboptimal for dimensions $d \geq 5,$ while \cite{MR4338378} proves that the LSE can still achieve near-optimal convergence rates in the non-Donsker regime.

To the best of our knowledge, the only sup-norm rate result for the LSE is \cite{Saumard}. In this work, the LSE is studied for $\mF$ the linear space spanned by a nearly orthogonal function system. For this setting, the LSE has an explicit representation that can be exploited to prove sup-norm rates. 
 
For a number of other settings, a more explicit characterization of the LSE is available from which local properties can be inferred. \cite{MR1429931} shows this for least squares penalized regression with total variation penalties. In this case, the LSE can be linked to splines, and this is exploited in their Proposition 8 to provide a local characterization of the LSE.

More explicit characterizations of the LSE are also available for several shape-constrained estimation problems. In isotonic regression, the regression function is non-decreasing, and the LSE admits an explicit expression. Let $(X_{(1)}, Y_{(1)}), \dots, (X_{(n)}, Y_{(n)})$ be a reordered version of the dataset such that $X_{(1)} \leq X_{(2)} \leq \dots \leq X_{(n)}$ and for all $0 \leq k \leq n$, define the $k$-th partial sum of $Y$ as $S_k := \sum_{i = 1}^k Y_{(i)}$. Additionally, for $x \in [0, 1]$, set $k_-(x) := \max \{0 \leq k \leq n: X_{(k)} < x\}$ and $k_+(x) := \min\{0 \leq k \leq n: X_{(k)} \geq x\}$. The LSE for isotonic regression is then piecewise constant on $[0, 1]$ and is given by
\begin{align*}
    \wh f_n(x) := \min_{k_+(x) \leq i \leq n} \ \max_{0 \leq j \leq k_-(x)}\frac{S_i - S_j}{i - j},
\end{align*}
see for instance \cite{brunk1955maximum, Brunk_1958, wright_isotonic_1981, hanson1973consistency} and Lemma 2.1 in \cite{soloff_isotonic_regression_lse}. Moreover, the pointwise limiting distributional results are available for the isotonic LSE. For the purposes of this discussion, we provide the following, adapted version of the main theorem in \cite{wright_isotonic_1981}.
\begin{theorem}
\label{th.adapted_wright}
Let $A >0, \alpha > 1/2$ and $x_0 \in [0, 1]$ with $p(x_0)>0.$ Suppose that there exists an open neighbourhood $U$ of $0$ and a continuous function $g\colon U \mapsto \RR$ such that $\lim_{x \to 0} g(x)/x^\alpha = 0$ and for all $x \in U$, $|f_0(x) - f_0(x_0)| = A|x - x_0|^\alpha+g(x - x_0)$. Then
\begin{align*}
    \Big(\frac{\alpha + 1}{A}\Big)^{1/(2\alpha + 1)} \big(np(x_0)\big)^{\tfrac{\alpha}{2\alpha + 1}}\big(\wh f_n(x_0) - f_0(x_0)\big) \overset{d}{\longrightarrow} Z,
\end{align*}
with $Z$ a random variable distributed as the slope at zero of the greatest convex minorant of $W_t + |t|^{\alpha + 1}$ and $(W_t)_t$ a two-sided Brownian motion.
\end{theorem}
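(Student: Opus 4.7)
The plan is to exploit the explicit min--max representation of the isotonic LSE together with a localization argument around $x_0$, following the roadmap of Brunk and Wright. Writing $Y_{(\ell)} = f_0(X_{(\ell)}) + \eps_{(\ell)}$ and subtracting $f_0(x_0)$ from the formula for $\wh f_n(x_0)$ recalled in the statement, one obtains
\begin{align*}
    \wh f_n(x_0) - f_0(x_0) = \min_{i \geq k_+(x_0)}\max_{j \leq k_-(x_0)}\frac{1}{i-j}\sum_{\ell=j+1}^{i}\Big(\eps_{(\ell)} + \big(f_0(X_{(\ell)})-f_0(x_0)\big)\Big).
\end{align*}
Set $r_n := ((\alpha+1)/A)^{2/(2\alpha+1)}(np(x_0))^{2\alpha/(2\alpha+1)}$, the effective local window dictated by the bias/variance balance carried out below. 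I would then reparametrize $i = k_+(x_0) + \lfloor u r_n \rfloor$ and $j = k_-(x_0) - \lfloor v r_n \rfloor$ with $u,v \geq 0$ and multiply the centred error by $1/\sqrt{r_n} = ((\alpha+1)/A)^{-1/(2\alpha+1)}(np(x_0))^{\alpha/(2\alpha+1)}$.

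Next, I would analyse the bias and noise on the local window separately. For the bias, uniformization and Glivenko--Cantelli control of the empirical c.d.f.\ on a neighbourhood of $x_0$ yield $X_{(k_+(x_0)+\ell)} \approx x_0 + \ell/(np(x_0))$ for $|\ell| \ll np(x_0)$, so that
\begin{align*}
    \frac{1}{i-j}\sum_{\ell=j+1}^{i}\big(f_0(X_{(\ell)})-f_0(x_0)\big) \approx \frac{np(x_0)}{(u+v)r_n}\int_{-v r_n/(np(x_0))}^{u r_n/(np(x_0))}\big(f_0(x_0+s)-f_0(x_0)\big)\diff s.
\end{align*}
The local expansion $|f_0(x_0+s)-f_0(x_0)| = A|s|^\alpha + o(|s|^\alpha)$, combined with the isotonicity of $f_0$ (which fixes the sign on either side of $x_0$), makes this term of order $(np(x_0))^{-\alpha/(2\alpha+1)}$, i.e.\ $\Theta(1)$ after rescaling. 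For the noise, the $\eps_{(\ell)}$ remain i.i.d.\ standard Gaussian conditionally on the design, so Donsker's invariance principle turns the partial sum $\sum_{\ell=j+1}^i \eps_{(\ell)}$ into a Brownian increment of scale $\sqrt{(u+v)r_n}$. Putting the two pieces together, the rescaled inner objective converges (in finite--dimensional distributions, and uniformly on compact subsets of $(u,v)$ by a tightness argument) to a process whose min--max is exactly the slope at zero of the greatest convex minorant of a process of the form $W_t + \tilde c|t|^{\alpha+1}$ for a deterministic constant $\tilde c$ built from $A$, $\alpha$ and $p(x_0)$. A linear time rescaling $t \mapsto \lambda t$ chosen so that $\lambda^{\alpha+1}\tilde c = 1$ absorbs $\tilde c$ into the prefactor in the theorem and leaves the canonical drift $|t|^{\alpha+1}$, producing the distribution of $Z$.

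The main obstacle is to justify that the outer min and max over the a priori unbounded range of $(u,v)$ localize to a compact set with high probability, which is needed to deploy the continuous mapping theorem for the slope--at--zero functional on $C(\RR)$ (equipped with uniform convergence on compacts). This is a cube--root tightness step in the spirit of Kim and Pollard: because the drift $|t|^{\alpha+1}$ is super--linear and convex, far from the origin it dominates the Brownian fluctuation, forcing the extremizers of the limiting process to lie in a bounded window, and the same bound transfers to the pre--limit via a peeling argument on the size of the localization. The lower order remainder $g$ in the expansion of $f_0$ has to be shown negligible uniformly on the localization window, which follows directly from its assumed order $o(|s|^\alpha)$ together with the fact that $|s| = O((np(x_0))^{-1/(2\alpha+1)}) = o(1)$ throughout the effective window. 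Once tightness and continuous mapping are in place, the distributional limit asserted in the theorem follows.
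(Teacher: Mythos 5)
Your proposal is conceptually sound, but it takes a genuinely different (and much longer) route than the paper. The paper's proof is a short reduction: it introduces a sequence $\varepsilon_n$ with $n^{-1/2}\ll\varepsilon_n\ll n^{-1/(2\alpha+1)}$, splits according to whether $\sup_y|F_n(y)-F(y)|<\varepsilon_n$, kills the bad event via the DKW inequality, and on the good event simply \emph{verifies the hypotheses of Theorem 1 of Wright (1981)} (with $\phi=\sigma^2=w\equiv 1$, $F'(x_0)=p(x_0)>0$, and the design close enough to its limiting c.d.f.), then invokes Wright's theorem as a black box. In other words, the entire technical content — localization via the min--max representation, Donsker invariance for the noise, cube--root tightness, and continuous mapping for the slope-at-zero functional — is delegated to the cited reference, and what the paper actually proves is only that the random-design sample is, with high probability, admissible in Wright's fixed-design framework. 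Your proposal instead reconstructs Wright's argument from scratch. That is a legitimate route, and you correctly identify all the main ingredients and the key difficulty (localization of the outer min--max to a compact window, to be handled by a peeling argument). What the paper's approach buys is brevity and a clean separation between the random-design reduction (the only new ingredient) and the classical fixed-design limit theory; what yours buys is self-containedness, at the cost of having to carry out the full Kim--Pollard-type tightness and continuous-mapping machinery, which is a substantial amount of work that a complete write-up would need to supply.

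One small slip worth flagging: you set $r_n := ((\alpha+1)/A)^{2/(2\alpha+1)}(np(x_0))^{2\alpha/(2\alpha+1)}$ and then say you "multiply the centred error by $1/\sqrt{r_n} = ((\alpha+1)/A)^{-1/(2\alpha+1)}(np(x_0))^{\alpha/(2\alpha+1)}$." That identity is wrong — $1/\sqrt{r_n}$ has $(np(x_0))^{-\alpha/(2\alpha+1)}$, not $(np(x_0))^{+\alpha/(2\alpha+1)}$ — and in any case the normalizing factor that blows up the $O_P((np(x_0))^{-\alpha/(2\alpha+1)})$ error to a nondegenerate limit should be $\sqrt{r_n}$, matching the prefactor $((\alpha+1)/A)^{1/(2\alpha+1)}(np(x_0))^{\alpha/(2\alpha+1)}$ in the statement. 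This looks like a bookkeeping typo rather than a conceptual error, but it should be corrected since the normalizing constant is exactly what the theorem asserts.
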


If $f_0$ is Lipschitz continuous, then $\alpha = 1$ and $Z$ is known to follow Chernoff's distribution. Assuming moreover that $p(x_0) > 0$, leads to $|\wh f_n(x_0) - f(x_0)| \asymp (np(x_0))^{-1/3}.$ This agrees with the local rate $t_n(x_0) \asymp (\log n/(np(x_0)))^{1/3}$ obtained in Corollary \ref{cor.corollary_lip} up to the $\log n$-factor that emerges due to the uniformity of the local rates.

For isotonic regression in $d$ dimensions, the recent article \cite{MR3988762} shows that the LSE achieves the minimax estimation rate $n^{- \min(2/(d+2),1/d)}$ up to $\log n$-factors. For $d\geq 3,$ is it known that $\log N(r,\mF,\|\cdot\|_2)\asymp r^{-2(d-1)}.$ Since for uniform design, the norms $\|\cdot\|_2$ and $\|\cdot\|_n$ are close, the standard approach to derive convergence rates via the entropy integral is then expected to yield no convergence rate faster than $n^{-1/(2d-2)}$. Since this rate is slower than the actual convergence rate of the LSE, this provides another instance where the entropy integral approach is suboptimal. Interestingly, \cite{MR3988762} proves, moreover, that if the isotonic function is piecewise constant with $k$ pieces, the LSE adapts to the number of pieces and attains the optimal adaptive rate $(k/n)^{\min(1,2/d)}$ up to $\log n$-factors. More on adaptation and the pointwise behaviour of the LSE in isotonic regression and other shape-constrained estimation problems can be found in the survey article \cite{MR3881209}.

For penalized LSEs, an alternative to the concentration bounds in empirical process methods is the recently developed proof strategy based on the small ball method, \cite{pmlr-v35-mendelson14, 8200211, MR3782379, JMLR:v18:16-422, MR4474492}.

\subsection{Related work on transfer learning}

\label{sec:discussion_TL}

From a theory perspective, the key problem in TL is to quantify the information that can be carried over from one task to another \cite{caruana1997multitask, micchelli2004kernels, baxter1997bayesian, ben2003exploiting}. Among the mathematical statistics articles, \cite{sugiyama2007direct} proposes unbiased model selection procedures and \cite{shimodaira_consistency} considers re-weighting to improve the predictive power of models based on likelihood maximization. The nonparametric TL literature mainly focuses on classification. Minimax rates are derived under posterior drift by \cite{cai2019transfer}, under covariate shift by \cite{kpotufe2021marginal}, and in a general setting by \cite{MR4352543}.

The closest related work is the recent preprint \cite{pathak2022new}. While we consider the LSE, this article proves minimax convergence rates for the Nadaraya-Watson estimator in nonparametric regression under covariate shift. The proofs differ, as one can use the closed-form formula for the Nadaraya-Watson estimator (NW). The rates are proven uniformly over two different sets of distribution pairs. Let $\rho_\eta(\P_\sX, \Q_\sX) := \int_0^1 \P_\sX([x \pm \eta])^{-1}\, d\Q_\sX(x),\ \gamma, C \geq 1$ and denote by $\mS(\gamma, C)$ the set of all pairs $(\P_\sX, \Q_\sX),$ such that $\sup_{\eta \in (0, 1]} \eta^\gamma \rho_\eta(\P_\sX, \Q_\sX) \leq C$.

To discuss the connection of this class to our approach, observe that, in our framework, bounding the prediction risk of the LSE with regards to some target distribution amounts to bounding the quantity $\int_0^1 t_n^{\P_\sX}(x)^2\, d\Q_\sX(x).$ Using the definition of the spread function and assuming $(\P_\sX, \Q_\sX) \in S(\gamma, C)$, we obtain
\begin{align*}
    \int_0^1t_n^{\P_\sX}(x)^2\, d\Q_\sX(x) = \frac{\log n}n\int_0^1 \frac{d\Q_\sX(x)}{\P_\sX([x \pm t_n^{\P_\sX}(x)])}
    \leq \frac{\log n}n \left(\inf_{x \in [0, 1]}t_n^{\P_\sX}(x)\right)^{-\gamma}.
\end{align*}

In some cases, faster rates for the prediction error can be obtained for the LSE using our results. As an example, consider again the case that the source density is $p(x)=(\alpha+1)x^\alpha \mathds{1}(x\in [0,1])$ and the target distribution is uniform on $[0,1].$ For the nonparametric regression model with covariate shift \eqref{eq.nonp_regr_covariate_shift}, Lemma \ref{lem.TL_1} shows that for the LSE $\wh f_n,$
\begin{align}\label{eq.rate_recall}
        \int_0^1 \Big( \wh f_n(x)-f_0(x)\Big)^2 q(x) \, dx \lesssim (\log n)^{\mathds{1}(\alpha = 3/2)}\bigg[\Big(\frac{\log n}n\Big)^{3/(3 + \alpha)}\vee \Big(\frac{\log n}n\Big)^{2/3}\bigg]
\end{align}
with probability tending to one as $n\to \infty.$ For the Nadaraya-Watson estimator, Lemma 31 in Appendix E of the supplement \cite{supplement} shows that if $\alpha \geq 1,$ there exists a $C> 0,$ such that for any $\varepsilon \in (0, \alpha)$, $(\P_\sX,\Q_\sX)\in \mS(\alpha, C)\setminus \mS(\alpha - \varepsilon, C).$ According to Corollary 1 in \cite{pathak2022new}, for $\wh f_{NW}$ the Nadaraya-Watson estimator with suitable bandwidth choice, we then have for any $\alpha \geq 1,$
\begin{align*}
    E\Big[\int_0^1\big(\wh f_{NW}(x) - f(x)\big)^2q(x) \, dx\Big] \lesssim n^{-2/(2 + \alpha)}.
\end{align*}
This is a slower rate than \eqref{eq.rate_recall}. The convergence rate of the LSE becomes slower than $(\log n/n)^{2/3}$ for $\alpha > 3/2$, while for the Nadaraya-Watson estimator, this happens already for $\alpha > 1$. We believe that the loss in the rate is due to the lack of local adaptivity of kernel smoothing with fixed bandwidth, as discussed in Section \ref{sec.main}.

\subsection{Extensions and open problems}

For machine learning applications, we are, of course, interested in multivariate nonparametric regression with $d$-dimensional design vectors $X_i$ and arbitrary H\"older smoothness $\beta.$ To extend the result, the definition of the spread function has to be adjusted. If $\beta>d/2,$ the LSE converges with the rate $n^{-2\beta/(2\beta+d)}$ (see the discussion above) and we believe that the local rate $t_n$ is now determined by the solution of the equation
\begin{align}
    t_n(x)^{2} \P_\sX\big(y:|x-y|_\infty\leq t_n(x)^{1/\beta}\big)=\frac{\log n}{n},
\end{align}
where $|v|_\infty$ denotes the largest absolute value of the components of the vector $v.$ In the case $d=1,$ this coincides with the minimax rate found in \cite{gaiffas2009uniform}. Observe moreover that for the uniform design distribution, $\P_\sX(y:|x-y|_\infty\leq t_n(x)^{1/\beta})\asymp t_n(x)^{d/\beta}$ and we obtain $t_n(x) \asymp (\log n/n)^{\beta/(2\beta+d)}.$ To show that $t_n$ is a lower bound on the local convergence rate, we believe that the lower bound in Theorem \ref{th.lower_bound} can be generalized without too much additional effort. But the upper bound is considerably harder than the case $\beta=d=1$ we considered in this work. The main reason is that the local perturbation in the proof also needs to be $\beta>1$ smooth, thus a piecewise approach as in \eqref{eq.local_pert_define} does not work anymore, and one needs to have tight control of the derivatives of the LSE. 

It is unclear what the local convergence rate is in the regime  $\beta < d/2.$

Throughout this work, we assumed data from the nonparametric regression model 
$Y_i = f_0(X_i)+ \eps_i$ with independent noise variables $\eps_i \sim \mN(0,1).$ If instead, we have $\eps_i \sim \mN(0,\sigma^2),$ the spread function should be determined by 
\begin{align*}
    t_n(x)^2 \, \P_\sX\big(\big[x\pm t_n(x)\big]\big) = \frac{\sigma^2\log n}n.
\end{align*}
That this is the right scaling has already been observed in the article \cite{gaiffas2009uniform}.

In Theorem \ref{th.main_theorem}, we assume that the regression function is $(1-\delta)$-Lipschitz for some positive $\delta$. Another interesting question is whether the local convergence result can be extended to a regression function that is itself $1$-Lipschitz. Again, the main complication arises in constructing the local perturbation in Lemma \ref{lem.g_function}. 
One might also wonder whether the same rates can be achieved if, instead of the global minimizer, we take any estimator $\wh f$ satisfying 
\begin{align*}
    \sum_{i=1}^n \big(Y_i-\wh f(X_i)\big)^2
    \leq \inf_{f\in \Lip(1-\delta)}\sum_{i=1}^n \big(Y_i-f(X_i)\big)^2+\tau_n
\end{align*}
for a pre-defined rate $\tau_n.$ In particular, it is of interest to determine the largest $\tau_n$ such that the optimal local rates can still be obtained.

The proposed approach might be used to derive theoretical guarantees for transfer learning based on deep ReLU networks, extending the earlier analysis of the prediction risk \cite{schmidt2020nonparametric, bauer2019deep, kohler2021rate}. Here we briefly illustrate this using shallow ReLU networks of the form 
\begin{align*}
    f(x)=\sum_{i=1}^N a_i(w_i x-v_i)_+ \quad \text{with} \ (u)_+:=\max(u,0) \quad \text{and} \ a_i,w_i,v_i \in \mathbb{R}.
\end{align*}
Denote by $\ReLU_N(1-\delta)$ the function class of all such shallow ReLU networks that are moreover $(1-\delta)$-Lipschitz. 

\begin{lemma}
\label{lem.ReLU_minim}
If $N\geq n-1\geq 1,$ then, 
\begin{align*}
    \wh f_n\in \argmin_{f\in \, \ReLU_N(1-\delta)} \sum_{i=1}^n \big(Y_i-f(X_i)\big)^2,
\end{align*}
implies that $\wh f_n$ is also a minimizer over all $\Lip(1-\delta)$ functions, that is,
\begin{align*}
    \wh f_n\in \argmin_{f\in \, \Lip(1-\delta)} \sum_{i=1}^n \big(Y_i-f(X_i)\big)^2.
\end{align*}
\end{lemma}

The result shows that a global minimizer over all ReLU networks in the class $\ReLU(1-\delta)$ is also an empirical risk minimizer over all $\Lip(1-\delta)$-functions. In particular, this means that all the results derived in this paper can be immediately applied, leading to local convergence rates and theoretical guarantees in the case of transfer learning.

Another possible future direction is to use the refined analysis and the local convergence of the LSE to prove distributional properties similar to those established for the least squares procedure under shape constraints. See also the discussion in Section \ref{sec.review_LSE}.

\section{Proof of the local convergence rate for the LSE}

\label{sec.proof}
We now describe the construction of the local perturbation and give the proof of Theorem \ref{th.main_theorem}.

\subsection*{Preliminary: concentration of histogram}

For sufficiently large sample size $n$, we can find an integer sequence $(N_n)_n$ satisfying $$1 \leq \frac{N_n}{16} \sqrt{\frac{\log n}n} \leq 2.$$ For discretization step size
\begin{align}
    \Delta_{n} := \frac 1{N_n}
    \label{eq.Deltan_def}
\end{align}
we show that $n^{-1}\sum_{i=1}^n \mathds{1}(X_i\in [j\Delta_{n},k\Delta_{n}])$ concentrates around its expectation $\int_{j\Delta_{n}}^{k\Delta_{n}} p(u)\, du.$ For this purpose, we first recall the classical Bernstein inequality for Bernoulli random variables.

\begin{lemma}[Bernstein inequality]
\label{lem.prob_bound}
Let $p \in [0, 1]$ and $V_1, \dots, V_n$ be $n$ independent Bernoulli variables with success probability $p$, then,
\begin{align*}
    \P\bigg(\frac 12 p \leq \frac 1n \sum_{i=1}^n V_i \leq \frac 32 p\bigg) \geq 1 - 2\exp\left(-\frac{np}{10}\right).
\end{align*}
\end{lemma}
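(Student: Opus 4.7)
The statement is Bernstein's inequality specialized to Bernoulli variables, with deviation factor $1/2$ and exponential constant $1/10$. My plan is to apply the standard Bernstein inequality for independent, centered, bounded random variables. The centered variables $V_i - p$ satisfy $|V_i - p| \leq 1$ almost surely and $\Var(V_i - p) = p(1-p) \leq p,$ so
\[
\P\Big(\Big|\sum_{i=1}^n (V_i - p)\Big| \geq t\Big) \leq 2\exp\Big(-\frac{t^2/2}{np(1-p)+t/3}\Big).
\]
Substituting $t = np/2$ produces the exponent
\[
\frac{(np/2)^2/2}{np(1-p)+np/6} = \frac{np}{8(7/6 - p)} \geq \frac{np}{8\cdot 7/6} = \frac{3np}{28} > \frac{np}{10},
\]
where the first inequality uses $p \geq 0$. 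Dividing the event by $n$ yields the stated two-sided bound on the empirical mean.

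A self-contained alternative is to treat the two tails separately by the Cram\'er--Chernoff method, using the standard moment generating function bound $\E[e^{\lambda V_i}] \leq \exp(p(e^\lambda - 1))$. For the upper tail, optimizing over $\lambda>0$ at deviation $3p/2$ gives $\exp(-np(\tfrac{3}{2}\log\tfrac{3}{2} - \tfrac{1}{2})) \leq \exp(-0.108\, np) \leq \exp(-np/10)$; for the lower tail, the classical sub-Gaussian Chernoff bound at deviation $p/2$ gives $\exp(-np/8) \leq \exp(-np/10)$. Summing the two tail probabilities produces the factor $2$.

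I would also briefly dispose of the degenerate case $p = 0$, for which $\tfrac{1}{n}\sum_i V_i = 0$ lies in the collapsed interval $[0, 0]$ deterministically, so the event has probability $1$ and the inequality holds trivially (the right-hand side being negative).

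The main (and only) obstacle is the arithmetic check that the constant $1/10$ in the exponent is compatible with the chosen deviation level $1/2$; there is no conceptual difficulty.
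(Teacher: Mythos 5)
Your main argument is exactly the paper's: center the variables, bound the variance by $p$, invoke the two-sided Bernstein inequality at the deviation $t=np/2$, and check that the resulting exponent $\tfrac{3np}{28}$ exceeds $\tfrac{np}{10}$. The additional Chernoff-style alternative and the $p=0$ remark are correct but supplementary; the core proof coincides with the paper's.
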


\begin{proof}
Let $U_i = V_i - p.$ We have, $|U_i|\leq 1,$ $\E[U_i] = 0$ and $\E[U_i^2] = \Var(V_i)=p(1-p)\leq p.$ By Bernstein's inequality,
\begin{align*}
    \P\left(\Big|\sum_{i=1}^n U_i\Big| \geq \frac n2 p\right) &\leq 2\exp\left(-\frac{n^2p^2/8}{np + np/6}\right)
    \leq 2\exp\left(-\frac{np}{8 + 4/3}\right)
    < 2\exp\left(-\frac{np}{10}\right).
\end{align*}
\end{proof}

Set $\ol p_{j,k} := \int_{[j\Delta_n, k\Delta_n]} p(u) \, du$ and define $\Gamma_n(\alpha)$ as the event
\begin{align}
    \Gamma_n(\alpha) := \mathlarger{\bigcup}_{\substack{j,k = 1,\dots, N_n\\ \ol p_{j, k}\geq \alpha\frac{\log^2 n}{n}}}\!\left\{X_1,\ldots,X_n:\Big|\frac 1n \sum_{i=1}^n \mathds{1}(X_i \in [j\Delta_{n}, k\Delta_{n}]) -  \ol p_{j, k}\Big| > \frac {\ol p_{j, k}}2\right\}.
    \label{eq.Gamman_def}
\end{align}

Roughly speaking, this set consists of all samples for which the histogram does not concentrate well around its expectation.

\begin{lemma}
\label{lem.prob.to.zero}
If $\alpha >0,$ then, the probability of the event $\Gamma_n(\alpha)$ vanishes as the sample size grows. More precisely,
\begin{gather*}
    \P\left(\Gamma_n(\alpha)\right)
    \leq \frac{1024 n}{\log n}\exp\left(-\frac{\alpha \log^2n}{10}\right) \to 0 \quad \text{as } n\to \infty.
\end{gather*}
\end{lemma}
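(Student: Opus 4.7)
The proof is a textbook union bound plus Bernstein argument; the only mildly delicate part is the counting of the pairs $(j,k)$ and matching the constant $1024$.

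My plan is as follows. First, fix any pair $(j,k)\in\{1,\dots,N_n\}^2$ with $j<k$ such that $p_{jk}:=\int_{j\Delta_n}^{k\Delta_n}p(u)\,du\geq \alpha\log^2 n/n$. Because $X_1,\dots,X_n$ are i.i.d.\ with density $p$, the indicators $V_i:=\mathds{1}(X_i\in[j\Delta_n,k\Delta_n])$ are i.i.d.\ Bernoulli with success probability $p_{jk}$. A direct application of Lemma~\ref{lem.prob_bound} gives
\begin{align*}
    \P\Big(\Big|\tfrac{1}{n}\sum_{i=1}^n V_i - p_{jk}\Big|>\tfrac{1}{2}p_{jk}\Big)
    \leq 2\exp\!\big(-np_{jk}/10\big)
    \leq 2\exp\!\big(-\alpha\log^2 n/10\big),
\end{align*}
where the second inequality uses the lower bound on $p_{jk}$ together with the monotonicity of $u\mapsto e^{-u}$.

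Next I would apply a union bound over all pairs $(j,k)$ appearing in the definition of $\Gamma_n(\alpha)$. Since the interval $[j\Delta_n,k\Delta_n]$ is nontrivial only when $j<k$ (intervals with $j=k$ are singletons and have vanishing $p$-measure, so the condition $\int\geq \alpha\log^2 n/n$ fails for them), the number of relevant pairs is at most $\binom{N_n}{2}\leq N_n^2/2$. The choice of $N_n$ satisfies $\tfrac{1}{16}N_n(\log n/n)^{1/2}\leq 2$, i.e.\ $N_n\leq 32\sqrt{n/\log n}$, which gives $N_n^2\leq 1024\,n/\log n$. Combining the union bound with the Bernstein estimate yields
\begin{align*}
    \P(\Gamma_n(\alpha))
    \leq 2\cdot\tfrac{N_n^2}{2}\cdot \exp\!\big(-\alpha\log^2 n/10\big)
    \leq \frac{1024\,n}{\log n}\exp\!\big(-\alpha\log^2 n/10\big),
\end{align*}
which is the stated bound.

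For the convergence to $0$, it suffices to observe that the exponent $-\alpha\log^2 n/10$ decays faster than any polynomial of $\log n$, hence overwhelms the prefactor $n/\log n$ since $\log\big(n/\log n\big)\leq \log n \ll \log^2 n$. There is no real obstacle here; the only place requiring mild care is ensuring the combinatorial count of pairs matches the constant $1024$ (rather than $2048$), which is handled by restricting to $j<k$ and absorbing the factor of $2$ from Bernstein against the $\tfrac{1}{2}$ from the counting of ordered vs.\ unordered pairs.
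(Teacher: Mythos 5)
Your proof is correct and takes essentially the same route as the paper: apply Lemma \ref{lem.prob_bound} (Bernstein) to each relevant pair $(j,k)$, then union bound and use $N_n \leq 32\sqrt{n/\log n}$. The paper's proof is a one-sentence sketch; you correctly fill in the only nontrivial detail, namely that only pairs $j<k$ contribute (the $j\geq k$ intervals carry zero mass since $\P_\sX$ has a Lebesgue density), so the count $\binom{N_n}{2}\leq N_n^2/2$ combines with the factor $2$ from Bernstein to give exactly $N_n^2\leq 1024\,n/\log n$, matching the stated constant.
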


\begin{proof}
Use the union bound and apply Lemma \ref{lem.prob_bound}. Since $N_n \leq 32\sqrt{n/\log n},$ we obtain the inequality. The convergence to zero follows from $\alpha > 0.$
\end{proof}

The previous result allows us to work on the event $\Gamma_n(\alpha)^c.$ On this event, the random quantity $n^{-1}\sum_{i=1}^n\mathds{1}(X_i \in [j\Delta_n, k\Delta_n])$ is the same as its expectation $\int_{j\Delta_n}^{k\Delta_n} p(u)\, du$
up to a factor two. In particular, we will apply this to random integers $j, k$ depending on the sample $ (X_1,Y_1),\ldots,(X_n,Y_n)$. We frequently use that for an $X$ that is independent of the data,
\begin{align}
    \P_\sX(X\in A)=\P\big(X\in A \, | \,(X_1,Y_1),\ldots,(X_n,Y_n)\big).
    \label{eq.PX_cond}
\end{align}
The proof outline in Section \ref{sec.proof_strategy} assumes that there exists a function $g$ lying at all data points between $\wh f_n$ and $f_0$ in the sense that $(\wh f_n(X_i)-g(X_i) )(g(X_i)-f_0(X_i)) \geq 0$ for all $i=1,\ldots,n.$ The next lemma guarantees the existence of a sufficiently large local perturbation $g$ of $\wh f_n$ with this property. This will allow us later to carry out the proof strategy sketched in Section \ref{sec.proof_strategy}.

\begin{lemma}[Construction of a local perturbation]
\label{lem.g_function}
Let $\psi \in \Lip(1)$ and $f\in \Lip(1-\delta)$. Define $x^* \in \argmax_{x \in [0, 1]} (\psi(x) - f(x))/t_n(x)$ and $\tilde x \in \argmax_{x \in [0, 1]}\psi(x) - f(x) - \tfrac{\delta}2|x - x^*|.$ Assume the existence of some $K >0,$ such that
\begin{align*}
    \frac{\psi(x^*) - f(x^*)}{t_n(x^*)} \geq K,
\end{align*}
and set 
\begin{align}
    s_n := 2Kt_n(\tilde x) \wedge \Big(2Kt_n(x^*) + \frac{\delta}{2}|x^* - \tilde x|\Big).
    \label{eq.sn_def}
\end{align}
Then there exists a function $g_n$ and two real numbers $0\leq x_\ell\leq x_u\leq 1,$ such that
\begin{enumerate}
    \item [$(i)$] $g_n \in \Lip(1)$ and $\supp(\psi - g_n) = [x_\ell, x_u]$. \label{i}
    \item [$(ii)$] $f \leq g_n \leq \psi$ on $[x_\ell,x_u]$. \label{ii}
    \item [$(iii)$] $\tilde x - s_n/\delta \leq x_\ell \leq \big(\tilde x - s_n/4\big)\vee 0 \leq \big(\tilde x + s_n/4\big) \wedge 1 \leq x_u \leq \tilde x + s_n/\delta$. \label{iii}
    \item [$(iv)$] the inequality $\psi(x) - g_n(x)\geq s_n/4$ holds for all $x\in [\tilde x \pm s_n/8]\cap [0, 1].$ \label{iv}
\end{enumerate}
\end{lemma}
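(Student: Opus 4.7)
The plan is to construct $g_n$ as a piecewise $1$-Lipschitz perturbation of $\psi$, supported on an interval $[x_\ell, x_u]$ with $|x_\ell - \tilde x| = |x_u - \tilde x| = s_n/\delta$ (which meets both sides of (iii) since $\delta < 1$ gives $s_n/\delta \geq s_n/4$), and equal to $\psi$ outside. Inside the interval, $g_n$ will take the shape of a trough below $\psi$: a slope-$(-1)$ descent from $(x_\ell, \psi(x_\ell))$, a plateau lying between $f$ and $\psi$, and a slope-$(+1)$ ascent up to $(x_u, \psi(x_u))$. The depth of the trough at $\tilde x$ will be at least $s_n/2$; condition (iv) on $[\tilde x \pm s_n/8]$ then follows from $g_n \in \Lip(1)$, since $\psi(x) - g_n(x) \geq (\psi(\tilde x) - s_n/8) - (g_n(\tilde x) + s_n/8) \geq s_n/2 - s_n/4 = s_n/4$.

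I would proceed in two steps. First, verify the gap $\psi(\tilde x) - f(\tilde x) \geq s_n/2$: evaluating the argmax inequality defining $\tilde x$ at $x = x^*$ and invoking the hypothesis $(\psi(x^*) - f(x^*))/t_n(x^*) \geq K$ yields
\begin{align*}
    \psi(\tilde x) - f(\tilde x) \geq K t_n(x^*) + \tfrac{\delta}{2}|x^* - \tilde x|,
\end{align*}
which exceeds both branches of the minimum in \eqref{eq.sn_def} (for the $K t_n(\tilde x)$ branch, use the $1$-Lipschitzness of $t_n$ from Lemma~\ref{lem.tn_diff} combined with the case condition $2K t_n(\tilde x) \leq 2K t_n(x^*) + (\delta/2)|x^* - \tilde x|$). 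Second, derive the integrated slope bound
\begin{align*}
    \psi(x) - \psi(\tilde x) \leq (1 - \delta/2)|x - \tilde x|, \qquad x \in [\tilde x \pm s_n/\delta],
\end{align*}
by combining the argmax inequality (upper bound on $(\psi - f)(x) - (\psi - f)(\tilde x)$ via the triangle inequality $|x - x^*| - |\tilde x - x^*| \leq |x - \tilde x|$) with $f \in \Lip(1-\delta)$. Applied at $x = x_\ell = \tilde x - s_n/\delta$, this shows that the slope-$(-1)$ line from $(x_\ell, \psi(x_\ell))$ reaches $\psi(x_\ell) - s_n/\delta \leq \psi(\tilde x) - s_n/2$ at $\tilde x$, giving the claimed dip; the mirror-image argument handles the slope-$(+1)$ ascent from $x_u = \tilde x + s_n/\delta$.

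The main technical obstacle is property (ii): keeping the plateau simultaneously above $f$ and below $\psi$ throughout $[x_\ell, x_u]$. The $(2-\delta)$-Lipschitz bound $\psi(x) - f(x) \geq s_n/2 - (2-\delta)|x - \tilde x|$ guarantees $\psi \geq f$ only on $[\tilde x \pm s_n/(2(2-\delta))]$, which is strictly narrower than $[x_\ell, x_u]$ once $\delta < 1$; near $x_\ell$ and $x_u$, $f$ may even exceed $\psi$. The plateau therefore cannot be defined as a constant strictly below $\psi(\tilde x) - s_n/2$, but must locally track $f$. I would define the plateau as the pointwise maximum of $f + c$ (for a small $c > 0$) and the two descent and ascent lines, which preserves $g_n \in \Lip(1)$ since $f \in \Lip(1-\delta) \subset \Lip(1)$ and the maximum of $1$-Lipschitz functions is $1$-Lipschitz. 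The careful matching of the descent, plateau, and ascent at the transitions, as well as the verification that the resulting function meets $\psi$ continuously at $x_\ell$ and $x_u$, is the delicate geometric bookkeeping illustrated in Figure~\ref{fig.perturbation} referenced in the text.
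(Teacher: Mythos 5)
Your preliminary computations are correct: evaluating the $\tilde x$-argmax inequality at $x^*$ gives $\psi(\tilde x)-f(\tilde x)\geq Kt_n(x^*)+\tfrac{\delta}{2}|x^*-\tilde x|\geq s_n/2$, and combining the argmax inequality with $f\in\Lip(1-\delta)$ does give the one-sided slope bound $\psi(x)-\psi(\tilde x)\leq(1-\delta/2)|x-\tilde x|$. The trouble starts with the construction itself.

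You fix $x_\ell,x_u=\tilde x\pm s_n/\delta$ a priori and then try to build a trough with slope-$(\pm1)$ arms and a plateau $\max(f+c,\text{descent},\text{ascent})$. As you yourself observe, $f$ may exceed $\psi$ on $[\tilde x\pm s_n/\delta]$ once $|x-\tilde x|>s_n/(2(2-\delta))$, and in that case $f+c>\psi$ there. But then (a) the maximum exceeds $\psi$, violating (ii); (b) continuity of $g_n$ with $\psi$ at the declared endpoints fails (the maximum at $x_\ell$ is strictly above $\psi(x_\ell)$); and (c) $\supp(\psi-g_n)$ is not the prescribed interval, since $\psi-g_n$ can vanish or change sign inside it. This is not bookkeeping that can be patched by tuning $c$ or reflowing the three pieces: as long as the endpoints are fixed at $\tilde x\pm s_n/\delta$, there is no way to guarantee the plateau stays below $\psi$, because the argmax inequality gives you no lower bound on $\psi-f$ outside $[\tilde x\pm s_n/4]$.

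The paper sidesteps this entirely by reversing the order of choices. It defines a single function
\begin{align*}
    h_n(x):=\psi(\tilde x)-f(\tilde x)+\delta|x-\tilde x|+f(x)-\tfrac{s_n}{2},
\end{align*}
which is in $\Lip(1)$ because $f\in\Lip(1-\delta)$ and $\delta|\cdot-\tilde x|\in\Lip(\delta)$, and which satisfies $h_n\geq f$ everywhere by the gap bound $\psi(\tilde x)-f(\tilde x)\geq s_n/2$. The endpoints $x_\ell,x_u$ are then defined \emph{endogenously} as the nearest points on either side of $\tilde x$ where $h_n$ meets $\psi$ (or $0,1$ if no crossing occurs). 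By construction $h_n<\psi$ strictly on $(x_\ell,x_u)$ (continuity plus $h_n(\tilde x)=\psi(\tilde x)-s_n/2<\psi(\tilde x)$), so $g_n:=h_n$ on $[x_\ell,x_u]$ and $g_n:=\psi$ elsewhere is automatically continuous, $\Lip(1)$, sandwiched by $f\leq g_n\leq\psi$ on $[x_\ell,x_u]$, and has $\supp(\psi-g_n)=[x_\ell,x_u]$. The bounds in (iii) are then properties one \emph{proves} about where the crossings can lie, via the two inequalities $h_n-f\geq\psi-f+\tfrac{\delta}{2}|\cdot-\tilde x|-\tfrac{s_n}{2}$ (outer bound $|x-\tilde x|\leq s_n/\delta$) and $\psi-h_n\geq\tfrac{s_n}{2}-2|\cdot-\tilde x|$ from $\psi-f\in\Lip(2-\delta)$ (inner bound $|x-\tilde x|\geq s_n/4$), and (iv) follows from the latter at $|x-\tilde x|\leq s_n/8$. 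The missing idea in your proposal is precisely this: use an $f$-tracking V-shape with slope $\delta$, not slope $1$, and let the intersection with $\psi$ determine $x_\ell,x_u$ rather than the other way around.
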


\begin{proof}
We construct a function $g_n$ satisfying all claimed properties. The construction requires several steps and can be understood best through the visualization in Figure \ref{fig.perturbation}. Recall that $\tilde x \in \argmax_{x \in [0, 1]} \psi(x) - f(x) - \tfrac{\delta}2|x - x^*|.$ Hence, $\psi(\tilde x) - f(\tilde x) - \tfrac{\delta}2|\tilde x - x^*|\geq \psi(x^*) - f(x^*).$ By assumption, we have $\psi(x^*) - f(x^*) > t_n(x^*)K$ and thus
\begin{align*}
    \psi(\tilde x) - f(\tilde x) > t_n(x^*)K + \frac{\delta}2|\tilde x - x^*| 
    \geq \frac{s_n}2.
\end{align*}
Define the function
\begin{align*}
    h_n(x) := \psi(\tilde x) - f(\tilde x) + \delta|x - \tilde x| + f(x) -\frac{s_n}2.
\end{align*}
Since $f \in \Lip(1 - \delta)$ and $\delta|\cdot| \in \Lip(\delta)$, we have that $h_n \in \Lip(1).$ By construction, $h_n(\tilde x)=\psi(\tilde x) -s_n/2 < \psi(\tilde x).$ Denote by $x_\ell$ the largest $x$ below $\tilde x$ satisfying $h_n(x)=\psi(x).$ If no such $x$ exists, set $x_\ell:=0.$ Similarly, define $x_u$ as the smallest $x$ above $\tilde x$ satisfying $h_n(x)=\psi(x)$ and set $x_u:=1$ if this does not exist. Define
\begin{align}
    g_n(x) := \psi(x) \mathds{1}(x \in [0, 1]\setminus [x_\ell, x_u]) + h_n(x)\mathds{1}(x \in [x_\ell, x_u]).
    \label{eq.local_pert_define}
\end{align}
By construction, $g_n \in \Lip(1)$ and $\supp(\psi - g_n) = [x_\ell, x_u]$. Thus $(i)$ holds. Also, $(ii)$ follows directly from the inequalities above. 

We now prove $(iii)$. Applying triangle inequality yields
\begin{align*}
    h_n(x) - f(x) &= \psi(\tilde x) - f(\tilde x) - \frac{\delta}2|\tilde x - x^*| + \frac \delta 2|\tilde x - x^*| + \delta|x - \tilde x|-\frac 12 s_n\\
    &\geq \psi(x) - f(x) - \frac \delta 2|x - x^*| + \frac \delta 2|\tilde x - x^*|  + \delta |x - \tilde x|-\frac 12 s_n\\
    &\geq \psi(x) - f(x) + \frac \delta 2 |x - \tilde x|-\frac 12 s_n.
\end{align*}
From the last inequality, we deduce that for any $x \in [0, 1]$ with $|x - \tilde x| \geq s_n/\delta$, we have $h_n(x) - f(x) \geq \psi(x) - f(x).$ Thus, 
\begin{align}
\tilde x - \frac{s_n}\delta \leq x_\ell
\leq x_u \leq \tilde x + \frac{s_n}{\delta},\label{eq.outer_bounds}
\end{align}
proving the first and last inequality in $(iii)$.

To prove the remaining inequalities in $(iii)$, we use $\psi - f \in \Lip(2 - \delta)$ to deduce $\psi(x) - f(x) \geq \psi(\tilde x) - f(\tilde x) - (2 - \delta)|x - \tilde x|.$ By definition, we have moreover $h_n(x) - f(x) = \psi(\tilde x) - f(\tilde x)  + \delta|x - \tilde x|-s_n/2$ and therefore $\psi(x) - f(x) +2|x - \tilde x| \geq h_n(x) - f(x) +s_n/2,$ which can be rewritten into
\begin{align}
    \psi(x) - h_n(x) \geq \frac{s_n}2 - 2|x - \tilde x|. \label{ineq.ineq}
\end{align}
The right-hand side of this inequality is $>0$ for all $x \in [\tilde x \pm s_n/4]\cap [0,1]$. The definition of $x_\ell$ and $x_u$ implies then that $x_\ell \leq 0 \vee \big(\tilde x - s_n/4 \big)\leq 1 \wedge \big(\tilde x + s_n/4 \big) \leq x_u.$ 

We now establish $(iv)$. One can use the lower bound from Equation \eqref{ineq.ineq} to obtain that for any $x \in [\tilde x \pm s_n/8] \cap [0,1]$, 
\begin{align*}
    \psi(x) - f(x) &\geq \frac{s_n}2 - 2|x-\tilde x|+ g_n(x) - f(x) \geq \frac{s_n}4 + g_n(x) - f(x) \geq \frac{s_n}4,
\end{align*}
applying $(ii)$ for the last inequality. This proves $(iv).$
\end{proof}

\begin{figure}[H]

  \begin{minipage}[c]{0.49\textwidth}
    \vspace{-.5\baselineskip}
    \includegraphics[width=.9\textwidth]{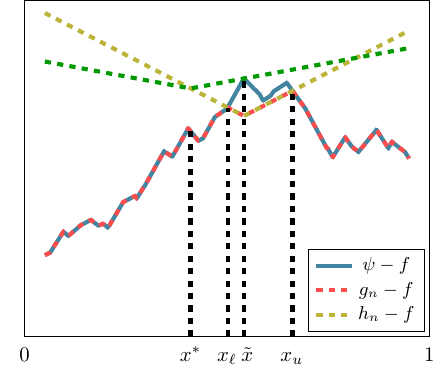}
    \vspace{-.5\baselineskip}
  \end{minipage}\hfill
  \begin{minipage}[c]{0.49\textwidth}
    \vspace{-.5\baselineskip}
    \caption{(Construction of the local perturbation) The variables $x^*$ and $\tilde x$ are as in Lemma \ref{lem.g_function}. From the construction of $\tilde x,$ we know that the function $\psi-f$ (plotted in blue) cannot lie above the green dashed curve with slope $\delta/2.$ The yellow function is $h_n - f.$ Since this function has slope $\delta$, it will hit the green dashed curve in a neighbourhood of $\tilde x$. This also implies that $h_n - f$ intersects for the first time with $\psi-f$ (blue curve) in this neighbourhood and provides us with control for the hitting points $x_\ell$ and $x_u.$ The (shifted) perturbation $g_n - f$ is given by the red curve.} \label{fig.perturbation}
    \vspace{-.5\baselineskip}
  \end{minipage}
\end{figure}

\begin{lemma}\label{lem.sn2PX_lb}
For given $K>1/2$ and $0<\delta<1,$ let $s_n$ and $\tilde x$ be as defined in Lemma \ref{lem.g_function}. Moreover, let $\P_\sX\in \mP_n(D)$ for some $D\geq 2.$ If $n \geq \exp(4K^2\vee 36(1 \vee\log^2D)))$ and $0 < c\delta \leq 2$ then,
\begin{enumerate}
    \item[$(i)$]$P_\sX\big([\tilde x\pm cs_n]\big)> D^{-\lceil \log_2(1/(\delta c))\rceil-1} \frac{\log^2 n}{n},$
    \item[$(ii)$] $s_n^2\P_{\sX}\big([\tilde x \pm cs_n]\big) \geq D^{-\lceil\log_2(1/(\delta c))\rceil - 1}4K^2\frac{\log n}n.$
\end{enumerate}
\end{lemma}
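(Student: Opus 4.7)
The plan is to anchor both bounds at the defining equation $t_n(\tilde x)^2 \P_\sX([\tilde x \pm t_n(\tilde x)]) = \log n / n$ and transfer the information to $[\tilde x \pm cs_n]$ via iterated applications of the local doubling property \eqref{eq.LDP}. For this I first need to control the ratio $t_n(\tilde x)/s_n$. The easy direction $s_n \leq 2K t_n(\tilde x)$ is immediate from \eqref{eq.sn_def}. The reverse bound $t_n(\tilde x) \leq 2 s_n/\delta$ requires a short case analysis on the two arguments of the minimum defining $s_n$: if the first argument attains the minimum the bound is trivial; if the second attains it, combining the $1$-Lipschitzness of $t_n$ (Lemma \ref{lem.tn_diff}) with the elementary estimate $|x^* - \tilde x| \leq 2 s_n/\delta$, which is built into the definition of $s_n$ since $s_n \geq \tfrac{\delta}{2}|x^* - \tilde x|$, yields $2K t_n(\tilde x) \leq s_n + (2K - \delta/2)|x^* - \tilde x| \leq 4Ks_n/\delta$.

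Next, using Lemma \ref{lem.big_prob} (with $\varepsilon = 1$) and the hypothesis $n \geq \exp(4K^2)$, one obtains $\sqrt{\log n}\, t_n(\tilde x) \leq 1$ and hence $\P_\sX([\tilde x \pm t_n(\tilde x)]) = \log n / (n t_n(\tilde x)^2) \geq \log^2 n / n$. For part (i), I would split according to whether $cs_n \geq t_n(\tilde x)$ or not. In the first sub-case the inclusion $[\tilde x \pm cs_n] \supseteq [\tilde x \pm t_n(\tilde x)]$ yields the bound directly. In the second sub-case, let $j$ be the smallest non-negative integer with $2^j cs_n \geq t_n(\tilde x)$, and iterate \eqref{eq.LDP} $j$ times to get $\P_\sX([\tilde x \pm t_n(\tilde x)]) \leq D^j \P_\sX([\tilde x \pm cs_n])$, so that $\P_\sX([\tilde x \pm cs_n]) \geq D^{-j} \log^2 n/n$. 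The comparability step above bounds $j \leq \lceil \log_2(1/c)\rceil + 1$, which matches (i).

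For part (ii), I would multiply the resulting inequality by $s_n^2$ and rewrite $4K^2 \log n/n = 4K^2 t_n(\tilde x)^2 \P_\sX([\tilde x \pm t_n(\tilde x)])$, so that the ratio to be controlled becomes $(s_n/(2Kt_n(\tilde x)))^2 \cdot \P_\sX([\tilde x \pm cs_n])/\P_\sX([\tilde x \pm t_n(\tilde x)])$. It is cleanest to split once more into the two cases of the minimum: when $s_n = 2K t_n(\tilde x)$, the prefactor equals $1$ and the conclusion reduces to the doubling argument of (i); when $s_n = 2K t_n(x^*) + \tfrac{\delta}{2}|x^* - \tilde x|$, invoke instead $s_n \geq 2K t_n(x^*)$ together with the spread equation at $x^*$ and compare $\P_\sX([\tilde x \pm cs_n])$ to $\P_\sX([x^* \pm t_n(x^*)])$ via the LDP, noting that $[\tilde x \pm cs_n]$ already contains $[x^* \pm t_n(x^*)]$ once $c$ exceeds a universal multiple of $1/\delta$, which follows from $|x^* - \tilde x| + t_n(x^*) \leq 2s_n/\delta + s_n/(2K)$. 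This second branch is precisely where the $\delta$ inside $\lceil \log_2(1/(\delta c))\rceil + 1$ enters.

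The main technical obstacle is the careful bookkeeping of the exponents of $D$ across the two cases of the minimum defining $s_n$ and across the sub-cases determined by the sign of $t_n(\tilde x) - cs_n$, in order to match precisely the exponents stated in (i) and (ii). Nothing conceptually deep is needed beyond the Lipschitz control of $t_n$ and the local doubling property, but writing out the iterated doubling inequalities uniformly in $c$ and $\delta$ requires some care.
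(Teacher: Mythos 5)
Your strategy is essentially the paper's: establish the two-sided comparability $s_n \leq 2K t_n(\tilde x)$ and $s_n \geq \tfrac{\delta}{2}t_n(\tilde x)$ using the $1$-Lipschitzness of $t_n$ and the structure of the minimum in \eqref{eq.sn_def}, then iterate \eqref{eq.LDP} from $[\tilde x \pm cs_n]$ up to the $t_n(\tilde x)$-scale (or, for the second branch in part (ii), anchor instead at $x^*$ and use $s_n \geq 2Kt_n(x^*)$ together with the inclusion $[x^*\pm t_n(x^*)]\subset [\tilde x \pm c's_n]$ for a suitable $c'$), and finally invoke the defining identity $t_n^2\P_\sX([\cdot\pm t_n]) = \log n/n$ together with Remark~\ref{rem.tnPX}. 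This is the paper's proof.

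One bookkeeping step does not quite close, though. In part (i) you claim that the comparability gives $j \leq \lceil \log_2(1/c)\rceil+1$. But the only reverse control you (correctly) derive is $t_n(\tilde x) \leq 2s_n/\delta$, and this cannot be improved to the $\delta$-free inequality $t_n(\tilde x) \leq 2s_n$: on the branch $s_n = 2Kt_n(x^*) + \tfrac{\delta}{2}|x^*-\tilde x|$ with $|x^*-\tilde x|$ much larger than $t_n(x^*)$, the Lipschitz bound $t_n(\tilde x)\leq t_n(x^*)+|x^*-\tilde x|$ forces the factor $1/\delta$. From $t_n(\tilde x)/(cs_n) \leq 2/(\delta c)$ the number of doublings needed is $j\leq \lceil\log_2(1/(\delta c))\rceil + 1$, not $\lceil\log_2(1/c)\rceil+1$. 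This is in fact exactly what the paper's proof of (i) derives, so the $\delta$-free constant in the displayed item (i) of Lemma~\ref{lem.sn2PX_lb} appears to be a typo; it is harmless downstream, since (i) is used with a fixed $c$ and the resulting constant only enters through the choice of $\alpha$ in $\Gamma_n(\alpha)$, and Lemma~\ref{lem.prob.to.zero} holds for any $\alpha>0$. So replace your claimed exponent by $\lceil\log_2(1/(\delta c))\rceil+1$ and the argument goes through.
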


\begin{proof}
Recall that $s_n = 2Kt_n(\tilde x) \wedge (2Kt_n(x^*) + \delta|x^* - \tilde x|/2).$ Since $n \geq \exp(4K^2), \ s_n \leq 2Kt_n(\tilde x) \leq \sqrt{\log n} \,t_n(\tilde x) \leq \sqrt{\log n}\sup_{x \in [0, 1]} t_n(x).$ This allows to apply now the local doubling property of $\P_\sX$ to intervals of length up to $s_n$.

By assumption $K\geq 1/4$ and $\delta<1.$ Hence $2Kt_n(\tilde x) \geq \delta t_n(\tilde x)/2.$ Moreover, using the fact that $t_n$ is $1$-Lipschitz from Lemma 27 (Appendix A of the supplement \cite{supplement}), $2Kt_n(x^*) + \delta|x^* - \tilde x|/2 \geq \frac \delta{2}\left[ t_n(x^*) + |x^* - \tilde x|\right] \geq \delta t_n(\tilde x)/2.$
Combining the two previous bounds, we obtain $cs_n \geq \delta c t_n(\tilde x)/2$. Using the latter and applying \eqref{eq.LDP} in total $\lceil \log_2(1/(\delta c))\rceil+1$ times to increase the constant $\delta c/2$ to $\delta c2^{\lceil \log_2(1/(\delta c))\rceil} \in [1,2],$ we find whenever $\delta c \leq 2,$
\begin{align*}
    \P_\sX\big(\big[\tilde x\pm cs_n \big]\big)&\geq 
    \P_\sX\big(\big[\tilde x \pm \delta ct_n(\tilde x)/2\big]\big)\\
    &\geq D^{-\lceil \log_2(1/(\delta c))\rceil-1} 
    \P_\sX\big(\big[\tilde x \pm \delta c2^{\lceil \log_2(1/(\delta c))\rceil} t_n(\tilde x)\big]\big) \\
    &\geq D^{-\lceil \log_2(1/(\delta c))\rceil-1} 
    \P_\sX\big(\big[\tilde x \pm t_n(\tilde x)\big]\big).
\end{align*}
Applying Remark 2 (Appendix A of the supplement \cite{supplement}) completes the proof of $(i)$. To prove the second claim, we once again lower bound $cs_n$. 

We first consider the case $s_n = 2Kt_n(\tilde x).$ If $c\geq1,$ then, we get $cs_n \geq s_n \geq 2Kt_n(\tilde x) \geq t_n(\tilde x)$ and
\begin{align*}
    s_n^2\P_\sX\big(\big[\tilde x \pm cs_n]\big]\big) &\geq 4K^2t_n(\tilde x)^2\P_\sX\big(\big[\tilde x \pm t_n(\tilde x)\big]\big)\\
    &= 4K^2\frac{\log n}n \\
    &\geq \big(1 \wedge D^{-\lceil\log_2(1/(\delta c))\rceil - 1}\big)4K^2\frac{\log n}n.
\end{align*}
Otherwise, if $c< 1,$ then, we can apply \eqref{eq.LDP} in total $k = \lceil\log_2(1/c)\rceil$ times, so that $2^kc \geq 1,$ and obtain
\begin{align*}
    s_n^2\P_\sX\big(\big[\tilde x \pm cs_n]\big]\big) &\geq 4K^2D^{-\lceil\log_2(1/c)\rceil}t_n(\tilde x)^2\P_\sX\big(\big[\tilde x \pm 2^kct_n(\tilde x)\big]\big)\\
    &= 4K^2D^{-\lceil\log_2(1/c)\rceil}\frac{\log n}n \\
    &\geq \big(1 \wedge D^{-\lceil\log_2(1/(\delta c))\rceil - 1}\big)4K^2\frac{\log n}n.
\end{align*}

We now consider the case $cs_n = 2Kct_n(x^*) + \delta c|x^* - \tilde x|/2$. Suppose without loss of generality that $x^* \leq \tilde x$. If $c \geq 2/\delta > 1,$ then we get
\begin{align*}
    \big[\tilde x \pm cs_n\big] &\supset \big[\tilde x - |x^* - \tilde x| - 2Kt_n(x^*), \tilde x + |x^* - \tilde x| + 2Kt_n(x^*)\big]\\
    &\supset \big[x^* - 2Kt_n(x^*), x^* + 2Kt_n(x^*)\big] \\
    &\supset \big[x^* \pm t_n(x^*)\big],
\end{align*}
which implies
\begin{align*}
    s_n^2\P_\sX\big(\big[\tilde x \pm cs_n\big]\big) &\geq 4K^2t_n(x^*)\P_\sX\big(\big[x^* \pm t_n(x^*)\big]\big)\\
    &= 4K^2\frac{\log n}n \\
    &\geq \big(1 \wedge D^{-\lceil\log_2(1/(\delta c))\rceil - 1}\big)4K^2\frac{\log n}n.
\end{align*}
Otherwise, if $c < 2/\delta$, we can apply \eqref{eq.LDP} in total $k = \lceil\log_2(1/(\delta c))\rceil + 1$ times, so that $2^k\delta c/2 \geq 1$ to obtain
\begin{align*}
    s_n^2\P_\sX\big(\big[\tilde x \pm cs_n]\big]\big) \geq 4K^2D^{-\lceil\log_2(1/(\delta c))\rceil - 1}t_n(x^*)^2\P_\sX\big(\big[\tilde x \pm 2^kcs_n\big]\big).
\end{align*}
Since $2^kc \geq 2/\delta$, we proceed as in the previous case and find
\begin{align*}
    s_n^2\P_\sX\big(\big[\tilde x \pm cs_n\big]\big) \geq 4K^2D^{-\lceil\log_2(1/(\delta c))\rceil - 1}\frac{\log n}n 
    \geq \big(1 \wedge D^{-\lceil\log_2(1/(\delta c))\rceil - 1}\big)4K^2\frac{\log n}n.
\end{align*}
Combining both cases, for any $K >1/2,$ any $0<\delta <1$ and any $c > 0$,
\begin{align*}
    s_n^2\P_\sX\big(\big[\tilde x \pm cs_n\big]\big) \geq \big(1 \wedge D^{-\lceil\log_2(1/(\delta c))\rceil - 1}\big)4K^2\frac{\log n}n.
\end{align*}
Finally, if $c < 4/\delta$, then $\lceil \log_2(1/(\delta c) \rceil \geq -1$ and $(ii)$ follows.
\end{proof}

\subsection*{Proof of the main theorem}

\begin{proof}[\textbf{\textit{Proof of Theorem \ref{th.main_theorem}}}] The proof follows the steps outlined in Section \ref{sec.proof_strategy}. Because of $|z| = \max(z, -z)$ and since all arguments carry over to the other case, it is enough to show that
\begin{align*}
    \sup_{\P_\sX \in \mP_n(D)} \ \sup_{f_0\in \Lip(1-\delta)} \, \P_{\!f_0} \left(\sup_{x \in [0, 1]}\frac{\wh f_n(x) - f_0(x)}{t_n(x)} \geq K\right)\to 0 \quad \text{as } n\to \infty.
\end{align*}
We will derive a contradiction by considering
\begin{align}
    K> K_*:=\frac 12 \vee 3 \big(1 \vee \log(D)\big) \vee \frac{2^{11/2}D^2}{21^3}\delta^{-\log_2(D)/2} \vee 
    (32 \chi D^5)^{3/4} D^{1+\frac 12 \log_2(1/(2\delta))}
    \label{eq.K*_def}
\end{align}
where 
\begin{align}
    \chi := 2^{17/3}\cdot 21^2 D^{\lceil \log_2(\delta^{-1})\rceil/3}\delta^{-2/3}.
    \label{eq.chi_def}
\end{align}
The condition $K > 3(1 \vee \log(D))$ ensures that if $n \geq \exp(4K^2)$, then, $n\geq \exp(4K^2\vee(36(1 \vee \log^2(D))))$. Therefore, we can apply Lemma \ref{lem.sn2PX_lb} under the simplified condition $n \geq \exp(4K^2)$.

Inequality \eqref{eq.argmin_inequality_PS} states that if $g_n \in \Lip(1)$ satisfies $(\wh f_n(X_i)-g_n(X_i))(g_n(X_i)-f_0(X_i)) \geq 0$ for all $i=1,\ldots,n$, then,
\begin{align}
    \sum_{i=1}^n(\wh f_n(X_i) - g_n(X_i))^2 \leq 2\sum_{i=1}^n \varepsilon_i(\wh f_n(X_i) - g_n(X_i)).
    \label{eq.argmin_inequality_recall}
\end{align}
Let $g_n$ be the local perturbation constructed in Lemma \ref{lem.g_function} with $\psi$ and $f$ replaced by $\wh f_n$ and $f_0,$ respectively. In particular, Lemma \ref{lem.g_function} $(ii)$ ensures that $(\wh f_n(x)-g_n(x))(g_n(x)-f_0(x)) \geq 0$ for all $x\in [0,1].$ As indicated in Section \ref{sec.proof_strategy}, we begin by lower bounding the left-hand side of inequality \eqref{eq.argmin_inequality_recall}.

\medskip

\noindent
{\textit{Lower bound for the left-hand side of \eqref{eq.argmin_inequality_recall}}:} Work on the event $\Gamma_n(D^{-4})^c$ defined in \eqref{eq.Gamman_def}. Let $x^*, \tilde x$ and $s_n$ be defined as in Lemma \ref{lem.g_function}, that is, $x^* \in \argmax_{x \in [0, 1]} t_n(x)^{-1} (\wh f_n(x) - f_0(x)),$ $\tilde x \in \argmax_{x \in [0, 1]} \wh f_n(x) - f_0(x) - \tfrac{\delta}2 |x - x^*|$ and $s_n = 2Kt_n(\tilde x) \wedge (2Kt_n(x^*) + \tfrac{\delta}2|x^* - \tilde x|).$ It is sufficient to show the result for all sufficiently large $n.$ In particular, it is enough to consider $n \geq \exp(4K^2/\delta^2),$ ensuring that
\begin{align}
    \frac{s_n}{\delta} \leq \frac{2K}{\delta} t_n(\tilde x) \leq \sqrt{\log n}\sup_{x \in [0, 1]}t_n(x).
    \label{eq.sn_ub3}
\end{align}
Lemma \ref{lem.g_function} $(iii)$ shows existence of an interval $I := [\tilde x \pm s_n/8]\cap [0, 1]$ with length $\geq  (s_n/8)\wedge 1/2$ such that for all $x \in I,\ \wh f_n(x) - g_n(x) \geq s_n/4.$ By restriction of the sum to $\{i:X_i\in I\}$ and using Lemma \ref{lem.g_function} $(iv)$, we find 
\begin{align}
\begin{split}
    \sum_{i=1}^n\big(\wh f_n(X_i) - g_n(X_i)\big)^2
    &\geq \Big(\frac{s_n}{4}\Big)^2\sum_{i=1}^n\mathds{1}(X_i \in  I).\label{***}
\end{split}    
\end{align}
We now relate the right-hand side of \eqref{***} to our discretization of $[0, 1]$ with step size $\Delta_n$ defined in \eqref{eq.Deltan_def}. By \eqref{eq.K*_def}, $K \geq 1/2$. Thus, by Lemma 27 $(ii)$ (Appendix A of the supplement \cite{supplement}), $s_n \geq t_n(\tilde x) \geq \sqrt{\log n/n}$ and 
\begin{align}
    \Delta_n \leq \frac 1{16} \sqrt{\frac{\log n}n} \leq \frac{s_n}{16}. 
    \label{eq.44537}
\end{align}
Hence, there exist two random integers $0 \leq \ell_1 < k_1 \leq N_n$ satisfying
\begin{align*}
    \Big(\tilde x - \frac {s_n}8\Big)\vee 0 \leq \ell_1\Delta_n \leq \Big(\tilde x - \frac{s_n}{16}\Big)\vee 0 < \Big(\tilde x + \frac{s_n}{16}\Big)\wedge 1 \leq k_1\Delta_n \leq \Big(\tilde x + \frac{s_n}8\Big)\wedge 1.
\end{align*}
This implies $(k_1 - \ell_1)\Delta_n \geq s_n/16> 0$ and  $[\ell_1\Delta_n, k_1\Delta_n] \subseteq I=[\tilde x \pm s_n/8]\cap [0, 1].$ Applying the lower bound $(i)$ in Lemma \ref{lem.sn2PX_lb}, we find
\begin{align*}
    \P_\sX\big([\ell_1\Delta_n, k_1\Delta_n]\big) &\geq \P_\sX\Big(\Big[\tilde x \pm \frac{s_n}{16}\Big]\Big)
    \geq \frac{\log^2 n}{D^4 n},
\end{align*}
with $\P_\sX$ the conditional distribution defined in \eqref{eq.PX_cond}. By the definition of the event $\Gamma_n(D^{-4})^c$ in \eqref{eq.Gamman_def}, we have $n^{-1}\sum_{i=1}^n \mathds{1}(X_i \in I) \geq  n^{-1}\sum_{i=1}^n \mathds{1}(X_i \in [\ell_1\Delta_n, k_1\Delta_n]) \geq \P_\sX([\ell_1\Delta_n, k_1\Delta_n])/2$. This and our choice of $\ell_1, k_1$ means that, on $\Gamma_n(D^{-4})^c,$ inequality \eqref{***} implies
\begin{align*}
\begin{split}
    \sum_{i=1}^n\left(\wh f_n(X_i) - g_n(X_i)\right)^2
    &\geq \Big(\frac{s_n}4\Big)^2\frac n{2} \P_\sX\big([\ell_1\Delta_n,k_1\Delta_n]\big)
    \geq \frac{s_n^2n}{32} \P_\sX\Big(\Big[\tilde x \pm \frac{s_n}{16}\Big]\Big).
\end{split}
\end{align*}
By \eqref{eq.sn_ub3}, $s_n \leq \sqrt{\log n}\sup_{x \in [0, 1]}t_n(x).$ This allows to apply \eqref{eq.LDP} in total five times to obtain
\begin{align}
\begin{split}
    \sum_{i=1}^n\left(\wh f_n(X_i) - g_n(X_i)\right)^2
    &\geq \frac{s_n^2n}{32D^{5}} \P_\sX\big(\big[\tilde x \pm 2s_n\big]\big).
\end{split}\label{eq.final_lb}    
\end{align}

\noindent
\textit{Upper bound for the right-hand side of \eqref{eq.argmin_inequality_recall}:} We now derive an upper bound for $2\sum_{i=1}^n \varepsilon_i\big(\wh f(X_i) - g_n(X_i)\big).$ Since $\wh f_n - g_n$ is supported on a small subset of $[0, 1]$, it is advantageous to study the sum over $X_i$ in the support. For $0 \leq a < b\leq 1,$ denote the class of $1$-Lipschitz functions supported on the interval $[a,b]$ by
\begin{align*}
    \mE_{[a, b]} := \{h \in \Lip(1): \supp(h) \subset [a, b]\}.
\end{align*}
Define $m(X)$ as the cardinality of the set $\{i \in \{1, \ldots n\}: X_i \in [a, b]\}$ and write  $Z_1, \ldots, Z_{m(X)}$ for the $m(X)$ variables $X_i$ that fall into the interval  $[a, b].$ For a function $h \in \mE_{[a, b]},$ we define the effective empirical $L_2$-norm,
\begin{align*}
    \|h\|_{m(X)} := \bigg(\frac 1{m(X)} \sum_{i =1}^{m(X)} h(Z_i)^2\bigg)^{1/2}.
\end{align*}
Here, effective refers to the fact that the semi-norm is computed based on the 'effective' sample $Z_1, \ldots, Z_{m(X)}.$ The normalization is chosen such that $n\|h\|_n^2=m(X)\|h\|_{m(X)}^2$ for all $h\in \mE_{[a, b]}.$ From now on, we follow the same steps as in Chapter 13 from \cite{wainwrighthighdimstat} with the sample size $n$ replaced by the effective sample size $m(X).$ The so-called critical inequality with $\sigma = 1$ is
\begin{align}
    \frac{\mG_n(\eta, \mE_{[a, b]})}\eta\leq \frac \eta{2},
    \label{eq.cricital_inequality}
\end{align}
with $\mG(\eta, \mE_{[a, b]})$ the Gaussian complexity of the set $\mE_{[a, b]}$, that is,
\begin{align*}
    \mG_n(\eta, \mE_{[a, b]}) := \E_\varepsilon\Bigg[\sup_{\substack{h \in \mE_{[a, b]}, \, \|h\|_{m(X)} \leq \eta}} \ \frac 1{m(X)} \bigg|\sum_{i=1}^{m(X)} \varepsilon_i h(Z_i)\bigg|  \Bigg].
\end{align*}
Recall that a function class $\mF$ is called star-shaped if $f\in \mF$ implies $\alpha f \in \mF$ for all $0\leq \alpha \leq 1.$ Observing that $\mE_{[a, b]}$ is star-shaped, one can derive the following modified version of Theorem 13.1 in \cite{wainwrighthighdimstat}.
\begin{theorem}
\label{th.adapted_wainwright}
If $\eta$ is a solution of the critical inequality \eqref{eq.cricital_inequality}, then for any $t \geq \eta$,
\begin{align*}
    \P\Big(\big\{\|\wh f_n - g_n\|_{m(X)}^2 \geq 16t\eta\big\} \cap \big\{\supp(\wh f_n - g_n) \subset [a, b]\big\} \, \Big|\,  X_1, \ldots, X_n\Big) \leq e^{ -\frac{m(X)t\eta}{2}}.
\end{align*}
\end{theorem}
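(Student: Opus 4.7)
The plan is to follow the argument of Theorem~13.1 in \cite{wainwrighthighdimstat}, adapted in three respects: (i) we work conditionally on $(X_1,\dots,X_n)$, so that the only remaining randomness is the Gaussian noise; (ii) on the event $\supp(\wh f - g) \subset [a,b]$, all summands with $X_i\notin[a,b]$ vanish, so empirical sums automatically reduce to sums over the effective sample $Z_1,\dots,Z_{m(X)}$; and (iii) the indexing set is $\mE_{[a,b]}$, which is convex and contains $0$, hence star-shaped around the origin. From the basic inequality \eqref{eq.argmin_inequality}, dropping the vanishing terms and dividing by $m(X)$, we obtain
\[
\|\wh f - g\|_{m(X)}^2 \,\leq\, \frac{2}{m(X)} \sum_{i=1}^{m(X)} \epsilon_i\bigl(\wh f - g\bigr)(Z_i).
\]
Since $\wh f - g \in \Lip(2)$ with support in $[a,b]$, the function $h := (\wh f - g)/2$ belongs to $\mE_{[a,b]}$, so the right-hand side equals $4 Z_n(h)$ where $Z_n(h) := m(X)^{-1}\sum_i \epsilon_i h(Z_i)$.

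Second, I would introduce the localized supremum $T(u) := \sup\{ Z_n(h) : h\in\mE_{[a,b]},\, \|h\|_{m(X)} \leq u\}$ and exploit the star-shape property via the standard peeling step: for any $h\in\mE_{[a,b]}$ with $\|h\|_{m(X)} \geq \eta_n$, the rescaled function $\eta_n h/\|h\|_{m(X)}$ lies in the $\eta_n$-ball, giving $Z_n(h) \leq (\|h\|_{m(X)}/\eta_n)\, T(\eta_n)$. Conditionally on the design, $T(\eta_n)$ is the supremum of a Gaussian process whose increments have sub-Gaussian diameter at most $\eta_n/\sqrt{m(X)}$, so Borell--Tsirelson--Sudakov concentration yields, for any $u > 0$,
\[
\P\Bigl( T(\eta_n) \geq \mG_n(\eta_n,\mE_{[a,b]}) + u \,\Big|\, X_1,\dots,X_n \Bigr) \,\leq\, \exp\!\Bigl(-\tfrac{m(X)\,u^2}{2\eta_n^2}\Bigr).
\]
Combining this with the critical inequality $\mG_n(\eta_n,\mE_{[a,b]}) \leq \eta_n^2/2$ and choosing $u = \sqrt{t\eta_n^3}$ gives, with conditional probability at least $1 - \exp(-m(X) t \eta_n /2)$, the bound $T(\eta_n) \leq \eta_n^2/2 + \sqrt{t\eta_n^3} \leq 2\sqrt{t\eta_n^3}$ whenever $t \geq \eta_n$.

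Third, on this high-probability event, the peeling bound together with the reduction above gives, in the nontrivial case $\|\wh f-g\|_{m(X)} > \eta_n$,
\[
\|\wh f - g\|_{m(X)}^2 \,\leq\, 4\,\frac{\|\wh f-g\|_{m(X)}}{\eta_n}\, T(\eta_n) \,\leq\, 8\,\|\wh f-g\|_{m(X)}\sqrt{t\eta_n},
\]
which, after dividing by $\|\wh f-g\|_{m(X)}$ and squaring, yields $\|\wh f - g\|_{m(X)}^2 \leq 16\,t\eta_n$, as claimed; the case $\|\wh f - g\|_{m(X)} \leq \eta_n$ is immediate because $\eta_n^2 \leq t\eta_n \leq 16 t\eta_n$. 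The main technical obstacle is bookkeeping with the random but, conditional on the design, deterministic quantity $m(X)$: one must check that $T(\eta_n)$, the Gaussian complexity $\mG_n(\eta_n,\mE_{[a,b]})$, and the variance proxy all use the effective sample size consistently. A minor subtlety is the factor-$2$ Lipschitz rescaling $h=(\wh f - g)/2$, which only affects absolute constants and is easily absorbed by slightly enlarging the constant $16$ or tightening the choice of $u$.
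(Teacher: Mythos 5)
Your sketch follows the same route as the paper, which simply cites Wainwright's Theorem~13.1 with the sample size replaced by the effective sample size $m(X)$, conditioning on the design, and star-shapedness of $\mE_{[a,b]}$; the ingredients you invoke (basic inequality, Lipschitz-$2$ rescaling $h=(\wh f-g)/2$, peeling via star-shape, Borell--TIS concentration for $T(\eta_n)$, and the critical inequality) are exactly the right ones, and they correctly reduce everything to the effective sample $Z_1,\dots,Z_{m(X)}$.

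However, the final arithmetic does not give the stated constant: from $\|\wh f-g\|_{m(X)}^2\le 8\|\wh f-g\|_{m(X)}\sqrt{t\eta_n}$ you get $\|\wh f-g\|_{m(X)}^2\le 64\,t\eta_n$, not $16\,t\eta_n$. The slip comes from replacing $\|h\|_{m(X)}$ by the cruder $\|\wh f-g\|_{m(X)}$ in the peeling bound; using the exact identity $\|h\|_{m(X)}=\tfrac12\|\wh f-g\|_{m(X)}$ gives $\|\wh f-g\|_{m(X)}^2\le 4\,Z_n(h)\le 4\cdot\tfrac{\|\wh f-g\|_{m(X)}}{2\eta_n}\,T(\eta_n)\le 4\|\wh f-g\|_{m(X)}\sqrt{t\eta_n}$, hence $\|\wh f-g\|_{m(X)}^2\le 16\,t\eta_n$. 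Your closing suggestion to ``tighten the choice of $u$'' would not work: changing $u$ changes the exponent $e^{-m(X)t\eta_n/2}$, so the only valid fix is to track the factor $2$ exactly. Also, the natural case split for the peeling is $\|h\|_{m(X)}\gtrless\eta_n$ (equivalently $\|\wh f-g\|_{m(X)}\gtrless 2\eta_n$), since only then is the rescaling factor $\eta_n/\|h\|_{m(X)}$ in $[0,1]$; your split on $\|\wh f-g\|_{m(X)}\gtrless\eta_n$ happens to work because $\|\wh f-g\|_{m(X)}/\eta_n>1$ there, but it is worth spelling out.
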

The covering number of $\mF^*$ with sup-norm balls of radius $r$ is denoted by $N(r,\mF^*, \|\cdot\|_\infty).$ Along with Theorem \ref{th.adapted_wainwright} comes a modified version of Corollary 13.1 in \cite{wainwrighthighdimstat} stating a sufficient condition for $\eta$ to solve the critical inequality.
\begin{corollary}
\label{cor.adapted_wainwright}
Set $B_n(\eta, \mE_{[a, b]}) := \{h \in \mE_{[a, b]}: \|h\|_{m(X)} \leq \eta\}$. Under the conditions of Theorem \ref{th.adapted_wainwright}, any $\eta \in (0, 1]$ satisfying
\begin{align}
    \frac{16}{\sqrt{m(X)}}\int_{\frac{\eta^2}{4}}^\eta \sqrt{\log N\big(t, B_n(\eta, \mE_{[a, b]} , \|\cdot\|_\infty})\big) \, d t \leq \frac{\eta^2}{4}
    \label{eq.covering_int_condition}
\end{align}
also satisfies the critical inequality and can be used in the conclusion of Theorem \ref{th.adapted_wainwright}.
\end{corollary}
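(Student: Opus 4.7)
The plan is to deduce the corollary from the following chaining-type bound: for every $\delta \in (0, \eta]$,
\begin{align*}
    \mG_n(\eta, \mE_{[a,b]}) \leq \delta + \frac{16}{\sqrt{m(X)}} \int_\delta^\eta \sqrt{\log N\bigl(t, B_n(\eta, \mE_{[a,b]}), \|\cdot\|_{m(X)}\bigr)}\, dt.
\end{align*}
Granting this, the specialization $\delta = \eta^2/4$ together with the hypothesis of the corollary gives $\mG_n(\eta, \mE_{[a,b]}) \leq \eta^2/4 + \eta^2/4 = \eta^2/2$, which rearranges exactly to the critical inequality \eqref{eq.cricital_inequality}.

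First, I would observe that conditionally on $X_1,\ldots,X_n$, the process $h \mapsto m(X)^{-1}\sum_{i=1}^{m(X)}\epsilon_i h(Z_i)$ indexed by $B_n(\eta,\mE_{[a,b]})$ is a centered Gaussian process, with natural pseudo-metric $d(h,h') = m(X)^{-1/2}\|h-h'\|_{m(X)}$, and that the diameter of $B_n(\eta, \mE_{[a,b]})$ in $\|\cdot\|_{m(X)}$ is at most $2\eta$. Star-shape of $\mE_{[a,b]}$, used in the statement of Theorem \ref{th.adapted_wainwright}, is what legitimately restricts the chaining to the localized ball $B_n(\eta, \mE_{[a,b]})$.

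Second, I would prove the displayed bound via a standard dyadic chaining, truncated at scale $\delta$. Choose scales $\delta_k = 2^{-k}\eta$ for $k = 0, 1, \ldots, k^{\ast}$, where $k^{\ast}$ is the largest integer with $\delta_{k^{\ast}} \geq \delta$; pick minimal $\delta_k$-nets $\mN_k$ of $B_n(\eta, \mE_{[a,b]})$ in $\|\cdot\|_{m(X)}$ at each level with $|\mN_0|=1$; and telescope any $h$ through its sequence of nearest net points $h_0, h_1, \dots, h_{k^{\ast}}$. Applying the Gaussian maximal inequality $\mathbb{E}\bigl[\max_{j \leq M}|g_j|\bigr] \leq \sigma\sqrt{2 \log M}$ at each chaining level and summing the resulting geometric series produces the Dudley integral on the right hand side. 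For the residual $h - h_{k^{\ast}}$, whose $\|\cdot\|_{m(X)}$-norm is at most $\delta_{k^{\ast}} \leq 2\delta$, a Cauchy--Schwarz step bounds its contribution to the expected supremum by a constant multiple of $\delta$, and with the standard constants this is absorbed into the $\delta$ on the right hand side.

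Third, substituting $\delta = \eta^2/4$ (which lies in $(0, \eta]$ since $\eta \leq 1$) and applying the hypothesis yields $\mG_n(\eta, \mE_{[a,b]}) \leq \eta^2/2$, i.e.\ the critical inequality. The sole technical obstacle is verifying that the numerical constant $16$ and the residual-term coefficient $1$ come out as stated, rather than unspecified universal constants; this is routine bookkeeping in the dyadic chaining estimate and can be read off from the proof of Corollary 13.1 in \cite{wainwrighthighdimstat}, after substituting the effective sample size $m(X)$ for $n$ and the effective design $Z_1, \dots, Z_{m(X)}$ for $X_1, \dots, X_n$ throughout.
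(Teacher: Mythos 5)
Your proposal is correct and takes essentially the same route as the paper. The paper does not prove Corollary \ref{cor.adapted_wainwright} from scratch: it simply points to Corollary 13.1 in \cite{wainwrighthighdimstat} and notes that one may repeat the argument verbatim with the effective sample size $m(X)$ and effective design $Z_1,\dots,Z_{m(X)}$ in place of $n$ and $X_1,\dots,X_n$; your truncated Dudley chaining bound, followed by the specialization $\delta=\eta^2/4$ and the final appeal to Wainwright for the exact constants, reproduces exactly that argument.
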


We now derive a bound for the covering number of the class $\mE_{[a, b]}$ by slightly refining the proof of classical results such as Corollary 2.7.10 in \cite{van1996weak}, see Appendix B of the supplement \cite{supplement} for a full proof.

\begin{lemma}
\label{lem.cov_num}
Given two real numbers $a<b,$ let $\mathcal{E}_{[a,b]}:= \{u\in \Lip(1): \supp(u) \subset [a,b]\}$. Then, for any positive $r$,
\begin{align*}
    \log N\big(r, \mathcal{E}_{[a,b]}, \|.\|_\infty\big) &\leq \frac{b-a}{r}
    \log 3.
\end{align*}
\end{lemma}
This allows us to upper bound the left-hand side of \eqref{eq.covering_int_condition} by $32\sqrt{m(X)^{-1}\eta (b-a) \log 3}$ and, for $n \geq 9$, by $32\sqrt{m(X)^{-1}\eta (b-a) \log(n)/2}$. Hence, if $\eta$ satisfies 
\begin{align*}
    \Big(2^{13}\frac{(b-a) \log(n)}{m(X)}\Big)^{1/3} \leq \eta,
\end{align*}
then it also satisfies \eqref{eq.covering_int_condition}. The latter inequality holds for $\eta = 21\big(\tfrac{(b - a)\log n}{m(X)}\big)^{1/3}.$ 

We further work on the classes $\mE_{[\ell\Delta_n, k\Delta_n]}$ with $0 \leq \ell < k \leq N_n$ and adapt the notation by defining $m_{k,\ell}(X)$ as the cardinality of the set $\{i \in \{1,\ldots,n\}: X_i \in [\ell\Delta_n, k\Delta_n] \}$, and 
\begin{align*}
    \eta_{n,k,\ell} := 21\Big(\tfrac{(k - \ell)\Delta_n\log n}{m_{k, \ell}(X)}\Big)^{1/3}.
\end{align*}
Set
\begin{align*}
    \mathcal{S}_{k, \ell} := \big\{(X_1,Y_1),\ldots,(X_n,Y_n): \supp(\wh f_n - g_n) \subseteq [\ell\Delta_n, k\Delta_n]\big\}.
\end{align*}
On $\mathcal{S}_{k, \ell},$ we have by Lemma \ref{lem.g_function} $(iii)$, that $[\tilde x \pm s_n/4]\subseteq [x_\ell,x_u]=\supp(\wh f_n - g_n) \subseteq [\ell\Delta_n, k\Delta_n].$ Using the first claim of Lemma \ref{lem.sn2PX_lb} and the fact that $D\geq 2$, we find that 
\begin{align}
    \P_\sX\big([\ell\Delta_n, k\Delta_n]\big)
    > \frac{\log^2 n}{D^4 n}.
    \label{eq.65554}
\end{align}
The second claim of Lemma \ref{lem.sn2PX_lb} combined with $P([\tilde x \pm a])= 1$ for $a\geq 1$, yields
\begin{align}
\begin{split}
    \P_\sX([\ell\Delta_n, k\Delta_n])(k - \ell)^2\Delta_n^2
    &\geq \P_\sX\Big(\Big[\tilde x\pm \frac{s_n}4\Big]\Big)\Big(\frac{s_n}{4} \wedge \frac 12 \Big)^2 \\
    &\geq \frac 14 \P_\sX\Big(\Big[\tilde x \pm \frac {s_n}4\Big]\Big)\Big(\frac{s_n}{4} \wedge 1 \Big)^2 \\
    &\geq \frac 14 \wedge \bigg(\frac{s_n^2}{64} \P_\sX\Big(\Big[\tilde x \pm \frac {s_n}4\Big]\Big)\bigg) \\
    &\geq \frac 14 \wedge \frac{K^2}{16}\frac{\log n}n D^{\log_2(\delta) -4}.
    \end{split}
    \label{eq.1111}
\end{align}
Define the set $\mathcal{T}=\{(k,\ell):\P_\sX([\ell\Delta_n, k\Delta_n]) > \log^2 n/(D^4 n)\}$ and note that by \eqref{eq.65554}, $(k,\ell) \notin \mathcal{T}$ implies that $\mathcal{S}_{k,\ell}$ is the empty set. Applying Corollary \ref{cor.adapted_wainwright}, we obtain
\begin{align}
    \begin{split}
    \P\Big( \Big\{\frac 1{m_{k,\ell}(X)} \sum_{i=1}^n \Big(\wh f_n(X_i) - g_n(X_i)\Big)^2 \geq 16t\eta_{n,k,\ell}\Big\} \cap \mathcal{S}_{k, \ell} \, \Big|& X_1, \dots, X_n\Big)\\ 
    &\leq e^{-\frac{m_{k,\ell}(X)t\eta_{n,k,\ell}}{2}}\mathds{1}\big((k,\ell)\in \mathcal{T}\big).
    \end{split}
    \label{eq.7888}
\end{align}
Recall that we work on the event $\Gamma(D^{-4})^c$. For any pair $(k,\ell)\in \mathcal{T},$ we have by \eqref{eq.65554}, $\tfrac n2 \P_\sX([\ell\Delta_n, k\Delta_n]) \newline \leq m_{k,\ell}(X) \leq 2n\P_\sX([\ell\Delta_n, k\Delta_n])$. Multiplying by $\eta_{n,k,\ell}^2$ and rearranging the terms yields 
\begin{align}
\begin{split}
    \frac 1{2^{1/3}} \leq \frac{m_{k,\ell}(X)\eta_{n,k,\ell}^2}{21^2P_{k, \ell}^{1/3}} \leq 2^{1/3}, \quad \text{with} \ \ P_{k, \ell} := n \P_\sX([\ell\Delta_n, k\Delta_n])(k - \ell)^2\Delta_n^2\log^2 n.
\end{split}    
    \label{eq.gamma_ineq_applied}
\end{align} 
Consider the event
\begin{align*}
    \mD_{k,\ell} &:=\underbrace{\Big\{\sum_{i=1}^n \Big( \wh f(X_i) - g(X_i)\Big)^2\geq 16\cdot 21^2(2P_{k, \ell})^{1/3} \Big\}}_{:= A} \cap \mathcal{S}_{k, \ell}\cap \Gamma_n(D^{-4})^c \\
    &\subseteq 
    \Big\{\frac 1{m_{k,\ell}(X)} \sum_{i=1}^n \Big( \wh f(X_i) - g(X_i)\Big)^2 \geq 16\eta_{n,k,\ell}^2\Big\}\cap \mathcal{S}_{k, \ell} \cap \Gamma_n(D^{-4})^c.
\end{align*}
Let $A,B$ be measurable sets and assume that $\P(A|X)\leq a(X).$ If $B$ only depends on $X,$ then, we have that $\P(A\cap B)=\E[\P(A\cap B|X)]=\E[\P(A|X)\mathds{1}(X\in B)]\leq \E[a(X)\mathds{1}(X\in B)].$ Below we apply this inequality for $X$ the sample $(X_1,Y_1),\ldots,(X_n,Y_n),$ $A$ the event defined in the previous display, $B=S_{k, \ell} \cap \Gamma_n(D^{-4})^c$ and $a(X)=\exp(-\tfrac 12 m_{k,\ell}(X)\eta_{n,k,\ell}^2).$ By \eqref{eq.K*_def}, $K > 2^{11/2}21^{-3} D^2 \delta^{-\log_2(D)/2}.$ Choosing $t = \eta_{n,k,\ell}$ and using \eqref{eq.7888} as well as \eqref{eq.1111}, we find for any $(k,\ell)\in \mathcal{T},$
\begin{align}
\begin{split}
    \P(\mD_{k,\ell})
    &\leq \E\Big[e^{-\frac{m_{k,\ell}(X)\eta_{n,k,\ell}^2}{2}}\mathds{1}\big(X\in \mathcal{S}_{k, \ell}\cap \Gamma_n(D^{-4})^c\big)\Big] \\
    &\leq \E\bigg[\exp\Big(-\frac {21^2}{2^{4/3}}\Big( n\log^2 n\P_\sX\big( [\ell\Delta_n, k\Delta_n]\big) (k - \ell)^2\Delta_n^2\Big)^{1/3}\Big) \bigg] \\
    &\leq \exp\bigg(-\frac{21^2}{2^{4/3}}\bigg[\Big(\frac{n\log^2n}4\Big)^{1/3} \wedge \Big(\frac{K^2}{16}D^{\log_2(\delta) - 4}\log^3 n\Big)^{1/3}\bigg]\bigg)\\
    &\leq  \exp\bigg(-\frac{21^2}{2^{4/3}}\Big(\frac{n\log^2n}4\Big)^{1/3}\bigg) \vee 
    n^{-2},
    \end{split}\label{eq.23455}
\end{align}
using $\exp(-a\log n)=n^{-a}$ for the last step. (Choosing the lower bound $K_*$ in \eqref{eq.K*_def} large enough, one can modify the previous argument and achieve polynomial decay in $n$ of any order.) If $(k,\ell)\notin \mathcal{T},$ then \eqref{eq.7888} implies $\P(\mD_{k,\ell})=0.$ 

Define the random variables $0 \leq \wh \ell < \wh k\leq N_n$ such that
\begin{align*}
    (\wh k,\wh \ell) \in \argmin_{(k,\ell): 0 \leq \ell < k \leq N_n}\big\{(k-\ell) \Delta_n : \supp(\wh f_n - g_n) \subset [\ell\Delta_n, k\Delta_n]\big\}.
\end{align*}
With $\mD_{k,\ell}$ as defined above, applying \eqref{eq.23455}, $N_n\leq 32\sqrt{n/\log n}$ and the union bound yields
\begin{align}
\begin{split}
    \P(\mD_{\wh k, \wh \ell}) 
    &\leq \sum_{0 \leq \ell < k \leq N_n} \P(\mD_{k,\ell}) \\ 
    &\leq
    \sum_{(k,\ell)\in \mathcal{T}} 
    \P(\mD_{k,\ell}) \\
    &\leq \frac{1024n}{\log n} \bigg(\exp\Big(-\frac{21^2}{2^{4/3}}\big(n\log^2 n/4\big)^{1/3}\Big) \vee 
    n^{-2}\bigg) \to 0 \quad \text{as }n\to \infty.
\end{split}\label{eq.Dkl_prob_bd}    
\end{align}
The convergence is uniform over $f_0 \in \Lip(1)$ and $\P_\sX\in \mP_n(D).$

In a next step of the proof, we provide a simple upper bound of the least squares distance on the set $\mD_{\wh k,\wh \ell}.$ Inequality \eqref{eq.44537} shows $\Delta_n\leq s_n/16\leq s_n/\delta$ and Lemma \ref{lem.g_function} $(iii)$ yields
\begin{align*}
    \tilde x - 2\frac{s_n}{\delta} \leq \tilde x - \frac{s_n}{\delta} - \Delta_n \leq \wh\ell\Delta_n \leq x_\ell < x_u \leq \wh k \Delta_n \leq \tilde x + \frac{s_n}{\delta} + \Delta_n
    \leq \tilde x + 2\frac{s_n}{\delta},
\end{align*}
implying $\P_\sX([\wh \ell\Delta_n, \wh k\Delta_n]) \leq \P_\sX([\tilde x \pm 2s_n/\delta])$ and $(\wh k - \wh\ell)\Delta_n \leq 4s_n/\delta$. This allows to further upper bound $P_{\wh k, \wh \ell}$ by $n\P_\sX([\tilde x \pm 2s_n/\delta])(4s_n/\delta)^2$. Using this, rearranging the rightmost inequality in \eqref{eq.gamma_ineq_applied} and applying the local doubling property \eqref{eq.LDP}  $\lceil\log_2(\delta^{-1})\rceil$ times recalling the inequality $s_n/\delta\leq \sqrt{\log n} \sup_x t_n(x)$ derived in \eqref{eq.sn_ub3}, shows that, on $\mD_{\wh k,\wh \ell}^c \cap \Gamma_n(D^{-4})^c,$
\begin{align*}
    \sum_{i=1}^n \big( \wh f_n(X_i) - g_n(X_i)\big)^2
    &\leq 16\cdot 21^2\Big(2n \P_\sX\big([\tilde x \pm 2s_n/\delta]\big)\Big(\frac{4s_n}{\delta}\Big)^2\log^2 n\Big)^{1/3}\\
    &\leq 2^{17/3}\cdot 21^2\delta^{-2/3}\Big(nD^{\lceil\log_2(\delta^{-1})\rceil}\P_{\sX}\big([\tilde x \pm 2s_n]\big)s_n^2\log^2 n\Big)^{1/3}\\
    &\leq \chi\Big(n\P_\sX\big([\tilde x \pm 2s_n]\big)s_n^2\log^2n\Big)^{1/3},
\end{align*}
with $\chi = 2^{17/3}\cdot 21^2 D^{\lceil \log_2(\delta^{-1})\rceil/3}\delta^{-2/3}$ as defined in \eqref{eq.chi_def}.

\medskip

\noindent
\textit{Combining the bounds for \eqref{eq.argmin_inequality_recall}:}  Using the lower bound \eqref{eq.final_lb} and the upper bound derived above, \eqref{eq.argmin_inequality_recall} implies that on the event $\mD_{\wh k,\wh \ell}^c \cap \Gamma_n(D^{-4})^c,$
\begin{align}
\begin{split}
    \label{eq.final_lb2}
    \frac{s_n^2n}{32D^5} \P_\sX\big(\big[\tilde x \pm 2s_n\big]\big)
    \leq \chi\Big(n \P_\sX\big([\tilde x \pm 2s_n]\big)s_n^2\log^2 n\Big)^{1/3}.
\end{split} 
\end{align}
Rearranging the terms in \eqref{eq.final_lb2}, and raising both sides to the power $3/2$ gives
\begin{align*}
    B s_n^{2}n\P_\sX\big([\tilde x\pm 2s_n]\big)\leq \log n,
\end{align*}
with $B:= (32\chi D^5)^{-3/2}$. Since $\chi$ is an absolute constant independent of $K$, $B$ is also independent of $K$. Using the second claim of Lemma \ref{lem.sn2PX_lb} and dividing by $\log n$ on both sides, we obtain on the event $\mD_{\wh k,\wh \ell}^c \cap \Gamma_n(D^{-4})^c,$
\begin{align*}
     4K^2BD^{-\lceil\log_2(1/(2\delta))\rceil -1} \leq 1.
\end{align*}
But since by \eqref{eq.K*_def}, $K>B^{-1/2} D^{1+\frac 12 \log_2(1/(2\delta))},$ we have derived a contradiction. Because $K$ was chosen to be any number larger than $K_*,$ we must have, on the event  $\mD_{\wh k,\wh \ell}^c \cap \Gamma_n(D^{-4})^c,$
\begin{align*}
    \sup_{x \in [0, 1]}\frac{\wh f_n(x) - f_0(x)}{t_n(x)}\leq K_*.
\end{align*}
The probability of the exceptional set tends to zero because by \eqref{eq.Dkl_prob_bd} and Lemma \ref{lem.prob.to.zero}, 
\begin{align*}
    \P\big((\mD_{\wh k,\wh \ell}^c \cap \Gamma_n(D^{-4})^c)^c\big)\leq \P(\mD_{\wh k,\wh \ell})+\P\big(\Gamma_n(D^{-4})\big)\to 0\text{ as }n\to \infty.
\end{align*}
The convergence can be checked to be uniform over $f_0\in \Lip(1-\delta)$ and $\P_\sX\in \mP_n(D).$ The proof is complete.
\end{proof}

\begin{acks}[Acknowledgments]
 The authors would like to thank the editor, the AE and three anonymous referees for helpful comments and suggestions.
\end{acks}

\begin{supplement}

\stitle{Supplement to "Local convergence rates of the nonparametric least squares estimator with applications to transfer learning"}

\sdescription{Additional proofs and technical lemmas.}

\end{supplement}

\begin{funding}
The research has been supported by the NWO/STAR grant 613.009.034b and the NWO Vidi grant
VI.Vidi.192.021.
\end{funding}

\bibliographystyle{imsart-number} 
\bibliography{references.bib} 

\newpage

\begin{frontmatter}

\title{Supplement to "Local convergence rates of the nonparametric least squares estimator with applications to transfer learning"}
\runtitle{Local Convergence Rates}

\begin{aug}

\author[A]{\fnms{Johannes}~\snm{Schmidt-Hieber}\ead[label=e1]{a.j.schmidt-hieber@utwente.nl}}
\author[A]{\fnms{Petr}~\snm{Zamolodtchikov}\ead[label=e2]{p.zamolodtchikov@utwente.nl}}

\address[A]{Department of Applied Mathematics,
University of Twente\printead[presep={,\ }]{e1,e2}}

\end{aug}

\end{frontmatter}

\begin{appendix}

\section{Proofs for the properties of the spread function}

\begin{proof}[\textbf{Proof of Lemma 1}]
Let $x \in [0,1]$ and define $h(t) := t^2\P_\sX([x - t, x+ t]).$ We have $h(0) = 0$ and $h(1) = 1$. Also, since $\P_\sX$ admits a Lebesgue density, $h$ is continuous. As a consequence of the mean value theorem, there is at least one $t \in [0, 1]$ such that $h(t)=\log n/n.$ For $n>1,$ $\log n/n>0$ and hence $t>0$ as well as $\P_\sX([x - t, x+ t])>0.$ Thus, if $t_0$ denotes the smallest solution, then it follows that $h$ is strictly increasing on $[t_0,\infty).$ Thus, there cannot be a second solution for $h(t) = \log n/n,$ proving uniqueness.
\end{proof}

\begin{lemma}
\label{lem.tn_diff}
If $\P_\sX \in \mM$ then,
\begin{enumerate}
\item[$(i)$] $x \mapsto t_n(x)$ is $1$-Lipschitz,
\item[$(ii)$] for all $n \geq 3, \ \inf_{x \in [0, 1]}t_n(x) \geq \sqrt{\log n/n},$
\item[$(iii)$] if they exist, the solutions $x_1, x_2$ to, respectively, $t_n(x) = x$ and $t_n(x) = 1-x$ are unique; moreover, for $n \geq 9,$ both solutions exist and verify $0 < x_1 < 1/2 < x_2 < 1,$ and
\item[$(iv)$] if, additionally, $p$ is continuous on $[0, 1],$ then $t_n$ is differentiable on $(0,1)\setminus\{x_1,x_2\}$ and its derivative is given by 
\begin{align*}
    t_n^\prime(x) = \frac{p(x-t_n(x))\mathds{1}(t_n(x) \leq x) - p(x+t_n(x))\mathds{1}(t_n(x) \leq 1-x)}{2\tfrac{\log n}{nt_n(x)^3} + p(x+t_n(x))\mathds{1}(t_n(x) \leq 1-x) + p(x - t_n(x))\mathds{1}(t_n(x) \leq x)}.
\end{align*}
\end{enumerate}
\end{lemma}

\begin{remark}
    \label{rem.monotonicity_tn_borders}
    In fact, we always have $t_n(0) > 0,$ and $t_n(1)>0.$ This means that one can partition $[0, 1]$ in three intervals $I_1 := [0, x_1),\ I_2 := [x_1, x_2]$ and $I_3 := (x_2, 1],$ such that on $I_1,\ t_n(x) > x,$ on $I_2,\ t_n(x) \leq x\wedge(1-x),$ and on $I_3,\ t_n(x) \geq 1-x.$ From the expression of $t_n^\prime,$ it follows that $t_n$ is strictly decreasing on $I_1$ and strictly increasing on $I_3$.
\end{remark}

\begin{proof}

{\it Proof of $(i)$:} Assume there are points $0\leq x,  y\leq 1$ such that $t_n(x)> t_n(y)+|x-y|.$ Then, $[y\pm t_n(y)]\subseteq [x\pm t_n(x)]$ and, therefore,
\begin{align*}
    \frac{\log n}{n}=t_n(y)^2\P_{\sX}\big([y\pm t_n(y)]\big)
    < t_n(x)^2\P_{\sX}\big([x\pm t_n(x)]\big)
    = \frac{\log n}{n}.
\end{align*}
Since this is a contradiction, we have $t_n(x)\leq  t_n(y)+|x-y|$ for all $0\leq x,y\leq 1.$ Hence $t_n$ is $1$-Lipschitz. 

{\it Proof of $(ii)$:} The inequality is an immediate consequence of the definition of the spread function. For any $x \in [0, 1], \ \P_\sX([x \pm t_n(x)]) \leq 1,$ implying
\begin{align*}
    t_n(x) = \sqrt{\frac{\log n}{n\P_\sX([x \pm t_n(x)])}} \geq \sqrt{\frac{\log n}n}. 
\end{align*}

{\it Proof of $(iii)$:} Assuming one solution exists, we first show its uniqueness. Then we prove that for all $n > 9,$ one solution exists. Suppose that the equation $t_n(x) = x$ admits at least one solution $x_1$ and suppose that there is another solution $y_1$ such that $0<x_1<y_1<1.$ Then $x_1^2P_X([0, 2x_1]) = y_1^2P([0, 2y_1]),$ which means that
\begin{align*}
    0 > (x_1^2 - y_1^2)\P_\sX([0, 2x_1]) = y_1^2P_X((2x_1, 2y_1]) \geq 0.
\end{align*}
This is a contradiction. Therefore, $x_1 = y_1,$ and the solution must be unique.

Next, we assume $n \geq 9,$ and we prove the existence of a solution $x_1\in (0,1/2)$ of the equation $t_n(x)=x$. The function $h\colon x \mapsto x^2\P([0, 2x])$ is continuous and increasing with $h(0) = 0$ and $h(1/2) \geq P([0,1])/4\geq 1/4$. Since $n\geq9,$ we have $0<\log n/n<1/4$ and the mean value theorem guarantees the existence of $x_1\in (0,1/2)$ such that $h(x_1) = \log n/n,$ that is, $x_1 = t_n(x_1).$

For the solution $x_2,$ one can apply the same reasoning to the distribution with density function $\tilde p\colon x \mapsto p(1-x).$

{\it Proof of $(iv)$:} Consider the function $f\colon (x, t) \mapsto t^2\P_{\sX}([x \pm t]) - \frac{\log n}n$. Denote by $D_1f(x, y)$ and $D_2f(x, y)$ the partial derivatives of $f$ with respect to its first and second variable evaluated at $(x, y)$. Since $\P_\sX$ admits a continuous Lebesgue density $p$ on $(0,1)$ and $t \mapsto t^2$ is smooth, $f$ is continuously differentiable on $S := (0, 1)\times (0,1)\setminus (\{(x,t): x=t\}\cup \{(x,t): x+t=1\}),$ and
\begin{align*}
    D_1f(x, t_n(x)) &= t_n(x)^2\left[p(x + t_n(x))\mathds{1}(t_n(x) \leq 1-x) - p(x - t_n(x))\mathds{1}(t_n(x) \leq x)\right].\\
    D_2f(x, t_n(x)) &=
    2t_n(x)\P_{\sX}([x\pm t_n(x)]) \\
    &\quad\! + t_n(x)^2\!\left[\!\Big(p(x+t_n(x))\mathds{1}(t_n(x) \leq 1-x) + p(x-t_n(x))\mathds{1}(t_n(x) \leq x)\Big)\!\right].
\end{align*}
For any $x \in [0, 1], \, f(x, t_n(x)) = 0$. Therefore, if $D_2f(x, t_n(x)) \neq 0,$ then one can apply the implicit function theorem, which states the existence of an open neighbourhood $U$ of $x$ and a continuously differentiable function $g\colon U \mapsto \RR$ satisfying $f(x, g(x)) = 0$ for all $x \in U$. Moreover, for all $x \in U,$ the derivative of $g$ is given by
\begin{align*}
    g^\prime(x) = -\left[D_2(x, g(x))\right]^{-1}\left[D_1(x, g(x))\right].
\end{align*}
Since $t_n(x)$ satisfies $f(x, t_n(x)) = 0$ for all $x \in U$ and $g$ is unique, we must have $t_n(x) = g(x)$. Moreover, since $t_n(x) > 0$ for all $x \in S,$ we have $D_2f(x, t_n(x)) > 0$ for all $x \in S$. Using $\P_\sX([x \pm t_n(x)]) = \log n/(nt_n(x)^2),$ we have, for all $x \in S,$ 
\begin{align*}
    t_n^\prime(x) &=
    \frac{t_n(x)\left[p(x + t_n(x))\mathds{1}(t_n(x) \leq 1-x) - p(x - t_n(x))\mathds{1}(t_n(x) \leq x)\right]}{2\P_{\sX}([x\pm t_n(x)]) + t_n(x)\!\left[\!\Big(p(x+t_n(x))\mathds{1}(t_n(x) \leq 1-x) + p(x-t_n(x))\mathds{1}(t_n(x) \leq x)\Big)\!\right]}\\
    &= \frac{p(x-t_n(x))\mathds{1}(t_n(x) \leq x) - p(x+t_n(x))\mathds{1}(t_n(x) \leq 1-x)}{2\tfrac{\log n}{nt_n(x)^3} + p(x+t_n(x))\mathds{1}(t_n(x) \leq 1-x) + p(x - t_n(x))\mathds{1}(t_n(x) \leq x)}.
\end{align*}
\end{proof}

\begin{proof}[\textbf{Proof of Lemma 3}]
Using the definition of the spread function, the inequality 
\begin{align*}
    \sqrt{\log n}\sup_{x\in [0,1]} t_n(x)<\varepsilon
\end{align*}
is equivalent to 
\begin{align}
    \inf_{x\in [0,1]} \, \P_\sX\big([x\pm t_n(x)]\big)> \frac{\log^2 n}{n\varepsilon^2}.
    \label{eq.tnPX_equiv}
\end{align}
To prove the lemma, we now argue by contradiction. Assume existence of an $x^* \in [0, 1],$ such that $\P_\sX([x^* \pm t_n(x^*)]) \leq \log^2 n/(n\varepsilon^2)$. We show that for all sufficiently large $n,$ this implies that $\P_\sX([0, 1]) < 1$ and thus $\P_\sX$ is not a probability measure. 

In a first step, we prove by induction that for $k\geq 1,$
\begin{align}
    \P_\sX\big(\big[x - (k + 1) t_n(x^*), x - (k-1)t_n(x^*)\big]\big) \leq D^k \P_\sX\big(\big[x \pm t_n(x^*)\big]\big). 
    \label{eq.induction_hypothesis_design}
\end{align}
For $k=1,$ using the local doubling condition, we have for any $x \in [0, 1],$
\begin{align*}
    \P_\sX\big(\big[x - 2t_n(x^*), x\big]\big) \leq \P_\sX\big(\big[x\pm 2t_n(x^*)\big]\big) \leq DP_X\big(\big[x\pm t_n(x^*)\big]\big).
\end{align*}
If \eqref{eq.induction_hypothesis_design} holds for some positive integer $k,$ then we have
\begin{align*}
    \P_\sX\big(\big[x - (k+2) t_n(x^*), x - kt_n(x^*)\big]\big) &\leq \P_\sX\big(\big[x - kt_n(x^*) \pm 2t_n(x^*)\big]\big)\\
    &\leq \P_\sX\big(\big[x - (k+1)t_n(x^*), x - (k-1) t_n(x^*)\big]\big)\\
    &\leq D^{k+1}\P_\sX\big(\big[x\pm t_n(x^*)\big]\big),
\end{align*}
proving the induction step. Thus \eqref{eq.induction_hypothesis_design} holds for all positive integers $k.$ By symmetry, one can prove that for all positive integers $k, \ \P_\sX([x + (k-1)t_n(x^*), x + (k+1)t_n(x^*)]) \leq D^k\P_\sX([x\pm t_n(x^*)]).$ Since $t_n(x^*) > \varepsilon/\sqrt{\log n},$ we can cover each of the intervals $[0,x^*]$ and $[x^*,1]$ by at most $\lceil \sqrt{\log n}/\varepsilon \rceil$ intervals of length $2t_n(x^*)$. This implies 
\begin{align*}
    \P_\sX([0, 1]) &\leq \sum_{k = 1}^{\lceil \sqrt{\log n}/\varepsilon \rceil} \P_\sX\big(\big[x^* - (k+1)t_n(x^*), x^* - (k-1)t_n(x^*)\big]\big) \\
    &\qquad\qquad + \P_\sX\big(\big[x^* + (k-1) t_n(x^*), x^* + (k+1)t_n(x^*)\big]\big)\\
    &\leq 2D^{\lceil \sqrt{\log n}/\varepsilon \rceil}\P_\sX\big(\big[x^*\pm t_n(x^*)\big]\big)\\ 
    &\leq 2\frac{D^{\lceil \sqrt{\log n}/\varepsilon \rceil} \log^2n}{n} \to 0 \quad \text{as $n\to \infty$}.
\end{align*}
Hence, there exists an $N = N(\varepsilon, D)$ such that for all $n \geq N, \P_{\sX}([0, 1]) < 1$. This is a contradiction. Hence, the claim holds.
\end{proof}

\begin{remark}
\label{rem.tnPX}
For $\varepsilon = 1,$ one can take $N(1, D) = \exp(36(1 \vee \log^2 D))$ in the previous result. To see this, consider for $\varepsilon=1$ the last upper bound in the proof of \eqref{eq.tnPX_equiv},
\begin{align*}
    \P_X([0, 1]) \leq 2\frac{D^{\lceil \sqrt{\log n}\rceil}\log^2n}{n}.
\end{align*}
Substituting $n = \exp(t^2),$ using $t^4 \leq \exp(4t)$ and $\lceil t \rceil \leq t + 1$ yields,
\begin{align*}
    2\frac{D^{\lceil t\rceil }t^4}{\exp(t^2)} \leq  2\frac{D^{t + 1} t^4}{\exp(t^2)} \leq \exp\big(-t^2 + t(\log(D) + 4) + \log(2) + \log(D)\big) = \exp(Q(t)).
\end{align*}
The largest root of the polynomial $Q(t) := -t^2 + t(\log(D) + 4) + \log(2) + \log(D)$ is 
\begin{align*}
    t_+ := \frac{\log(D) + 4 + \sqrt{(\log(D) + 4)^2 + 4(\log(2) + \log(D))}}{2}.
\end{align*}
Using $\log(2) \leq 7/4,$ we obtain that for $D \leq e, \ \log(D)\leq 1$ and $t_+ < (5 + \sqrt{25 + 11})/2 < 6.$ If $D > e,$ then $a \leq a\log(D)$ for all $a>0,$ and $t_+ \leq 5\log(D) + \sqrt{25\log(D) + 11\log(D)} \leq 6\log(D)$. Since $n = \exp(t^2),$ we find for any $n \geq N(1, D)=\exp(36(1 \vee \log(D)^2)),$ that $\P_X([0, 1])<1,$ completing the argument.
\end{remark}

\begin{proof}[\textbf{Proof of Lemma 6}]
Since $p(x_0) = 0,$ for all $x \in U,$ we have $A^{-1}|x - x_0|^\alpha \leq p(x) \leq A|x - x_0|^\alpha$. Consequently,
\begin{align}\label{eq.zero_behaviour_ineq}
    \frac{t^{\alpha+3}}{(\alpha + 1)A} \leq t^2\P_\sX([x_0 \pm t]) \leq \frac A{\alpha + 1}t^{\alpha + 3}.
\end{align}
By Lemma 3 and Remark \ref{rem.tnPX}, $\sup_{x \in [0, 1]}t_n(x) < \log^{-1/2}n$ for all $n \geq N(1, D)$. Hence, there exists a positive integer $N,$ only depending on $D$ and $U,$ such that for all $n \geq N,\ t_n(x_0) \in U$. Combining inequality \eqref{eq.zero_behaviour_ineq} with $t = t_n(x_0),$ using the definitions of $t_n$ and some calculus yields
\begin{align*}
    \Big(\frac{(\alpha + 1)\log n}{An}\Big)^{1/(\alpha + 3)} \leq t_n(x_0) \leq \Big(\frac{(\alpha + 1)A\log n}{n}\Big)^{1/(\alpha + 3)}.
\end{align*}
\end{proof}

\section{Proofs for upper bounds in Section 2}
\label{app:main_theorem}

\subsection*{Covering bound}
\label{app.cov_num}

\noindent\textbf{Proof of Lemma 26.}
The proof follows standard arguments. For the sake of completeness, all details are provided. As $a, b$ are fixed, we simply write $\mE$ instead of $\mE_{[a, b]}$. Let $g \in \mE$. We construct $3^{\lfloor (b-a)/r\rfloor}$ functions serving as the centers for the $\|\cdot\|_\infty$-norm covering balls.

The idea of the proof is to construct a finite subset of $\mE$ containing an element $h$ that lies less than $\|\cdot\|_\infty$-distance $r$ away from $g.$ The upper bound on the metric entropy is then given by the cardinality of this finite set. 
For $k = \lfloor \frac{b-a}{r} \rfloor$ and $x_i=ik,$ $i=0,\ldots,k,$ define $h$ by induction as follows. Set $h(x) = 0 \text{ if } x\in [0, r],$ and for any $1 \leq i \leq k$ and any $x \in (x_i, x_{i+1}],$ set
\begin{align*}
    h(x) &= 
    \begin{cases}
    h(x_i) + |x - x_i|, \text{ if for some }y \in [x_i, x_{i+1}], g(y) - h(x_i) > r,\\
    h(x_i) - |x - x_i|, \text{ if for some }y \in [x_i, x_{i+1}], g(y) - h(x_i) < -r,\\
    h(x_i), \text{ otherwise}.
    \end{cases}
\end{align*}
\begin{wrapfigure}{r}{.45\textwidth}
    \centering
    \includegraphics[width=.4\textwidth]{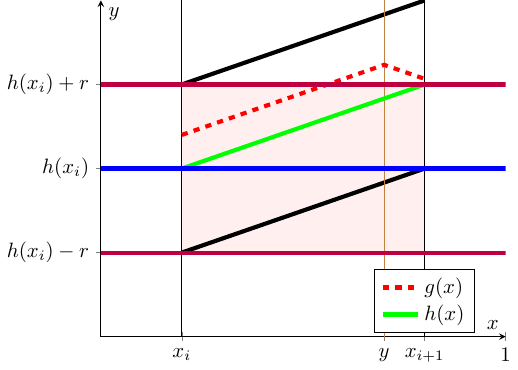}
    \caption{If one point of $g$ is more than $r$ away from $h(x_i)$ on the interval $[x_i, x_{i+1}),$ then, by construction, $h$ moves in this direction on the interval $[x_{i+1},x_{i+2})$.}
    \label{fig.covering_lip}
\end{wrapfigure}
Denote by $H$ the set of all possible functions $h$ constructed as above.

Intuitively, if $|g(x_i) - h(x_i)| \leq r$ and there is some $y \in [x_i, x_{i+1})$ such that $g(y)$ is more than $r$ away from $h(x_i),$ say $g(y) \geq h(x_i) + r,$ then, by the Lipschitz property, $g([x_i, x_{i+1}))$ will be a subset of $[h(x_i), h(x_i) + 2r]$ and the function $x \mapsto h(x_i) + |x - x_i|$ is not more than $r$ away from $g$ on the interval $[x_i, x_{i+1}),$ see Figure \ref{fig.covering_lip} for more details.

We prove by induction that for any $i \in \{0, \dots, k\}$ and for any $x \in [x_i, x_{i+1}], \ |g(x) - h(x)|\leq r$.

First, since $g(0) = 0$ and $g \in \Lip(1),$ it follows from the definition of $h$ that $\sup_{x \in [0, r]}|g(x) - h(x)| \leq r.$ This proves the property for $i = 0$. Next, assume that for some $1 \leq i \leq k-1,$ we have $\sup_{x \in [x_{i-1}, x_i]}|g(x) - h(x)| \leq r.$ We prove that $|g(x) - h(x)|\leq r$ for all $x \in [x_{i}, x_{i+1}].$ 

We distinguish two cases. First, if there exists a $y \in [x_i, x_{i+1}]$ such that $|g(y) - h(x_i)| > r,$ then, by symmetry, it suffices to prove the induction step assuming $g(y) - h(x_i) > r$. By construction of $h$, we then have that for any $x \in [x_i, x_{i+1}], \ h(x) = h(x_i) + |x - x_i|$. Since $g \in \Lip(1)$ we have, for any $x \in [x_i, x_{i+1}],$
\begin{gather*}
    -r \leq g(y) - |x - y| - h(x_i) - |x - x_i| \\
    \leq g(x) - h(x)\\
    \leq g(x_i) + |x - x_i| - h(x_i) - |x - x_i| \leq r,
\end{gather*}
where the last inequality applies the induction assumption. This proves the first case of the induction step. Next, assume that for any $x \in [x_i, x_{i+1}],\ g(x) \in [h(x_i) - r, h(x_i) + r],$ then for any $x \in [x_i, x_{i+1}],\ h(x) = h(x_i)$ and $|g(x) - h(x)| \leq r,$ proving the induction step for the second case. We conclude that for any $g \in \mathcal{E},$ there exists a function $h \in H$ such that $\|g - h\|_{\infty} \leq r$. The set $H$ has cardinal $\leq 3^k$. Hence, 
\begin{align*}
    N(r, \mathcal{E}, \|.\|_\infty) \leq 3^{\lfloor \frac{b - a}{r}\rfloor}.
\end{align*}
\qed

\subsection*{Proof of Corollary 5}

In a first step, we prove the claimed inequalities for all $n \geq \exp(4\kappa) \vee 9.$ Let $\underline p_n := \inf_{x \in [0, 1]} p_n(x)$. We can see that
\begin{align}
    \underline p_n\geq (4\kappa)^{\frac 3{3+\beta}}\Big(\frac{\log n}{n}\Big)^{\frac{\beta}{3+\beta}}.
    \label{eq.pn_lb_large_n}
\end{align}
Because $n \geq 9,$ by virtue of Lemma \ref{lem.tn_diff}, $t_n(x) < 1/2$ and therefore $[x \pm t_n(x)]$ contains an interval of length $t_n(x)$ for all $x \in [0, 1]$. By definition of the spread function $t_n,$ we have
\begin{align*}
    \frac{\log n}{n} = t_n(x)^2 \P_\sX^n\big([x\pm t_n(x)]\big)\geq t_n(x)^3 \underline p_n.
\end{align*}
Since $\underline p_n\geq n^{-1/(3+\beta)} \log n,$ it follows that
\begin{align}
    t_n(x) \leq \Big(\frac{\log n}{ \underline p_n n}\Big)^{1/3}\leq \Big(\frac{\underline p_n}{4\kappa}\Big)^{1/\beta}.
    \label{eq.1357}
\end{align}

Since $p_n$ can be extended to a function $q_n:\mathbb{R}\to \mathbb{R}$ with $|q_n|_\beta \leq \kappa,$ Theorem 4 and (2.1) in \cite{MR3714756} show that $|p_n^\prime(x)|^\beta\leq \kappa |p_n(x)|^{\beta-1},$ which is the same as 
\begin{align}
    |p_n^\prime(x)|\leq \kappa^{1/\beta} p_n(x)^{(\beta-1)/\beta}.
    \label{eq.7531}
\end{align}

We now prove 
\begin{align}
    \frac {t_n(x)^3p_n(x)}2\leq \frac{\log n}n \leq 3t_n(x)^3p_n(x).
    \label{eq.2468}
\end{align}
To do so, we distinguish two cases, either $t_n(x) \geq x \wedge (1 - x)$ or $t_n(x) \leq x \wedge (1 - x).$ Moreover, we treat the cases $\beta \leq 1$ and $1<\beta \leq 2,$ separately. 

From Lemma \ref{lem.tn_diff} and Remark \ref{rem.monotonicity_tn_borders}, we know that one can partition $[0, 1]$ in three intervals $I_1 := [0, a[,\ I_2 := [a, b], \ I_3 := ]b, 1],$ such that 
\begin{align*}
    \begin{cases}
    t_n(x) > x, &\text{ for all } x \in I_1,\\
    t_n(x) \leq x \wedge (1 - x), &\text{ for all } x \in I_2, \text{ and,}\\
    t_n(x) > 1-x, &\text{ for all } x \in I_3.
    \end{cases}
\end{align*}
We distinguish two cases, either $x \in I_2$ or $x \in I_1 \cup I_3$.

\noindent{\underline{$x \in I_2$}:} For $x \in I_2,$ observe that
\begin{align}
    \label{eq.case1_corollary}
    \P_\sX^n([x \pm t_n(x)]) = 2t_n(x)p_n(x) + \int_{x - t_n(x)}^{x + t_n(x)}(p_n(u) - p_n(x)) \, d u.
\end{align}

{\it \underline{$0 < \beta \leq 1$}:} In this case, $|p_n(u)-p_n(x)|\leq \kappa |u-x|^\beta.$ Plugging this in \eqref{eq.case1_corollary}, using $t_n(x)^2 \P_\sX^n([x \pm t_n(x)])=\log n/n$ and \eqref{eq.1357} leads to \eqref{eq.2468}.

{\it \underline{$1 < \beta \leq 2$}:} A first order Taylor expansion yields, for all $u \in [0, 1], \ p_n(u)=p_n(x)+p_n^\prime(\xi)(u-x)$ for a suitable $\xi$ between $x$ and $u.$ Since $\int_{x-t_n(x)}^{x+t_n(x)} p_n^\prime(x)(u-x) \, du=0,$ we have $\P_\sX^n([x \pm t_n(x)]) = 2t_n(x)p_n(x) + \int_{x - t_n(x)}^{x + t_n(x)}(p_n^\prime(\xi) - p_n^\prime(x))(u-x) \, \, d u.$ Using the definition of the H\"older semi-norm, we find $|\int_{x - t_n(x)}^{x + t_n(x)}(p_n^\prime(\xi) - p_n^\prime(x))(u-x) \, \, d u|\leq 2\kappa t_n(x)^{1+\beta}$ and also obtain \eqref{eq.2468}.

\noindent{\underline{$x \in I_1 \cup I_3$}:}
By symmetry, it is sufficient to prove the inequality for $x \in I_1$ (we can apply the same to the density $x \mapsto p(1 - x)$ and obtain the results on $I_2$). For $x \in I_1,$ we have 
\begin{align}
\label{eq.case2_corollary}
    \P_\sX^n([x \pm t_n(x)]) = (x + t_n(x))p(x) + \int_0^{x + t_n(x)}(p_n(u) - p_n(x))\, du.
\end{align} 

{\it \underline{$0 < \beta \leq 1$}:} In this case, $|p_n(u) - p_n(x)| \leq \kappa |u - x|^\beta$. Plugging this in \eqref{eq.case2_corollary}, using $t_n(x)^2 \P_\sX^n([x \pm t_n(x)])=\log n/n$ and \eqref{eq.1357} leads to \eqref{eq.2468}.

{\it \underline{$1 < \beta \leq 2$}:} A first order Taylor expansion yields, for all $u \in [0, 1], \ p_n(u)=p_n(x)+p_n^\prime(\xi)(u-x)$ for a suitable $\xi$ between $x$ and $u.$ Since $\int_{0}^{x+t_n(x)} p_n^\prime(x)(u-x) \, du=p^\prime(x)(t_n(x)^2 - x^2)/2,$ we have 
\begin{align*}
    \P_\sX^n([x \pm t_n(x)]) = (x + t_n(x))p_n(x) &+ \int_{0}^{x + t_n(x)}(p_n^\prime(\xi) - p_n^\prime(x))(u-x) \, d u \\
    &+ \frac{p^\prime(x)}2(t_n(x)^2 - x^2).
\end{align*} 
Therefore,
\begin{align*}
    \left|\P_\sX^n([x \pm t_n(x)]) - (x + t_n(x))p_n(x)\right| \leq (x + t_n(x))\big(\underbrace{|\kappa t_n(x)^{\beta}|}_{:= A}  + \underbrace{\left|p^\prime(x)(t_n(x) - x)/2\right|}_{:= B}\big).
\end{align*}
We now bound $A$ and $B$. Because of \eqref{eq.1357}, $A \leq \ul p_n/4 \leq p_n(x)/4$. Now, using \eqref{eq.1357} and \eqref{eq.7531}, we have
\begin{align*}
    B \leq \frac 12 t_n(x)p^\prime(x) \leq \frac 12\kappa^{1/\beta}p_n(x)^{(\beta - 1)/\beta}\Big(\frac{\ul p_n}{4\kappa}\Big)^{1/\beta}\leq \frac{1}{4}p_n(x).
\end{align*}
Consequently,
\begin{align*}
    \frac{t_n(x) p_n(x)}2 \leq \frac{x + t_n(x)}2 p_n(x) \leq \P_\sX^n([x \pm t_n(x)]) \leq \frac{3(x + t_n(x))}2 p_n(x) \leq 3t_n(x) p_n(x).
\end{align*}
Multiplying both sides by $t_n(x)^2$ and using the definition of $t_n$ yields \eqref{eq.2468}. Ultimately, (5) is a direct implication of \eqref{eq.2468} so that for all $n \geq 9,$ all $0 < \beta \leq 2,$ and all $x \in [0, 1]$,
\begin{align}
    \label{eq.3579}
    \Big(\frac{\log n}{3np_n(x)}\Big)^{1/3} \leq t_n(x) \leq \Big(\frac{2\log n}{np_n(x)}\Big)^{1/3}.
\end{align}

In a next step, we prove that $\P_\sX^n \in \mP_n(2 + 3^\beta \kappa + 3\kappa^{1/\beta}).$ For fixed $x\in [0,1]$ and fixed $\eta \in [0,\sqrt{\log n} \sup_{x \in [0, 1]}t_n(x)],$ let $\rho := \min_{u \in [x \pm \eta]}p_n(u).$

If $\beta \leq 1,$ then we have that for all $u\in [x - 2\eta, x + 2\eta]$ that $p_n(u)\leq \rho+\kappa 2^\beta \eta^\beta.$ If $1<\beta \leq 2,$ then the assumption $|p_n|_\beta \leq \kappa$ ensures that for all $u,v\in [0,1], \ |p_n^\prime(u)-p_n^\prime(v)|\leq \kappa |u-v|^{\beta-1}.$ Using \eqref{eq.7531}, we find that by first order Taylor expansion for any $v\in [x\pm \eta],$ there exists $\xi(u)$ between $u$ and $v$ such that
\begin{align*}
    \P_\sX^n\big([x\pm 2\eta]\!\!\setminus \!\![x\pm \eta]\big)
    &\leq \int_{[x\pm 2\eta]\setminus [x\pm \eta]} \!\!p_n(v)\!+\!\big(p_n^\prime(\xi(u))-p_n^\prime(v)\big)(u-v)+p_n^\prime(v) (u-v) \, du \\
    &\leq 2\eta p_n(v)+\kappa (3\eta)^\beta 2\eta+\kappa^{1/\beta} p_n(v)^{(\beta-1)/\beta} 6\eta^2.
\end{align*}
Since $v$ was arbitrary in $[x\pm \eta],$ we can replace $p_n(v)$ by $\rho.$

Thus for all $0<\beta \leq 2,$ decomposing $[x - 2\eta, x + 2\eta]=[x - \eta, x + \eta] \cup ([x - 2\eta, x + 2\eta]\setminus [x - \eta, x + \eta])$ and using that $\rho\geq \underline p_n,$ we find
\begin{align*}
    \frac{\P_\sX^n\big([x - 2\eta, x + 2\eta]\big)}{\P_\sX^n\big([x - \eta, x + \eta]\big)} &\leq 1 + \frac{2\eta(\rho +
    \kappa (3\eta)^\beta+3\kappa^{1/\beta}\rho^{(\beta-1)/\beta}\eta)}{2\eta\rho}
    \leq 2+ \kappa 3^\beta \frac{\eta^\beta}{\underline p_n}+ 3\kappa^{\frac{1}{\beta}} \frac{\eta}{\underline p_n^{1/\beta}}.
\end{align*}
Using \eqref{eq.3579}, $\beta\leq 2$ and 
$\min_{x \in [0, 1]} p_n(x) \geq n^{-\beta/(3+\beta)} \log n,$ we find 
\begin{align}
    \sqrt{\log(n)}\, t_n(x)\leq \log^{1/\beta}(n)\Big(\frac{2\log n}{n p_n(x)}\Big)^{1/3}\leq \underline p_n^{1/\beta}.
\end{align}
Combined, the two previous displays finally yield for $0\leq \eta \leq \sqrt{\log(n)}\sup_x t_n(x),$
\begin{align*}
    \frac{\P_\sX^n\big([x - 2\eta, x + 2\eta]\big)}{\P_\sX^n\big([x - \eta, x + \eta]\big)}
    \leq 2+ \kappa 2^{\beta/3}3^\beta + 2^{1/3}3\kappa^{1/\beta},
\end{align*}
proving $\P_\sX^n \in \mP_n(2 + 2^{\beta/3}3^\beta \kappa + 2^{1/3}3\kappa^{1/\beta}).$

To prove the local convergence rate, we now apply Theorem 4. Equation (5) yields $t_n(x)\leq (\log n/(np_n(x)))^{1/3}$ for all $n \geq \exp(4\kappa)\vee 9.$ Hence,
\begin{align*}
    \sup_{x \in [0, 1]}\frac{|\wh f_n(x) - f_0(x)|}{t_n(x)} \geq \Big(\frac n{\log n}\Big)^{1/3}\!\!\!\!\! \sup_{x \in [0, 1]} p(x)^{1/3} \big|\wh f_n(x) - f_0(x)\big|,
\end{align*}
and
\begin{align*}
    \P\bigg(\sup_{x \in [0, 1]} \Big( p(x)^{1/3}|\wh f_n(x) - f_0(x)|\Big) \geq K&\Big(\frac{\log n}n\Big)^{1/3}\bigg)
    \leq \P\bigg(\sup_{x \in [0, 1]}\Big(\frac{|\wh f_n(x) - f_0(x)|}{t_n(x)}\Big) \geq K \bigg). 
\end{align*}
Since $\P_\sX^n \in \mP_n(2 + 2^{\beta/3}3^\beta \kappa + 2^{1/3}3\kappa^{1/\beta}),$ we can apply Theorem 4 and the claim follows. \qed

\subsection*{Heuristic argument for (6)}

\label{app.heuristic}
To see (6), consider the kernel smoothing estimator $\wh f_{nh}(x)=(nhp_n(x))^{-1}\sum_{i=1}^n Y_iK(\tfrac{X_i-x}h)$ with positive bandwidth $h$ and a kernel function $K$ supported on $[-1,1].$ This is a simplification of the Nadaraya-Watson estimator that replaces the density $p_n$ by a kernel density estimator. Observe that using $Y_i=f_0(X_i)+\eps_i,$

\begin{gather*}
    \big|\wh f_{nh}(x) -f_0(x)\big|
    \leq
    \underbrace{\Big|\frac{1}{nhp_n(x)} \sum_{i=1}^n \eps_i K\Big(\frac{X_i-x}{h}\Big)\Big|}_{\text{stochastic error}}
    +\underbrace{\Big|\frac{1}{nhp_n(x)} \sum_{i=1}^n f_0(X_i) K\Big(\frac{X_i-x}{h}\Big)-f_0(x)\Big|}_{\text{deterministic error}}.
\end{gather*}
In the stochastic error term, the sum is over $O(nhp_n(x))$ many variables since $K$ has support on $[-1,1].$ By the central limit theorem, this means that this sum is of the order $O(\sqrt{nhp_n(x)}).$ To obtain a uniform statement in $x,$ yields an additional $\sqrt{\log n}$-factor and together with the normalization $1/(nhp_n(x)),$ the stochastic error is of the order $\sqrt{\log n/(nhp_n(x))}.$

Thus, to heuristically verify (6), it remains to bound the deterministic error term. This term is close to its expectation 
\begin{align*}
    &\Big|\frac{1}{hp_n(x)} \int_0^1 f_0(u)K\Big(\frac{u-x}{h}\Big) p_n(u) \, du -f_0(x)\Big| \\
    &= \Big|\frac 1{p_n(x)} \int \big(f_0(x+vh)p_n(x+vh)-f_0(x)p_n(x)\big) K(v) \, dv \Big|,
\end{align*}
where we used substitution $v=(u-x)/h$ and $\int K(v) \, dv=1.$ If $p_n$ is sufficiently smooth and $K$ has enough vanishing moments, then the Lipschitz property of $f_0$ shows that this term can be bounded by $h,$ completing the argument for (6).

\subsection*{Estimation of spread functions}
The symmetric difference of two sets $C,D$ is $C\triangle D:=(C\setminus D)\cup (D\setminus C).$

\begin{theorem}[Theorem 4.1 in \cite{MR890285}] \label{thm.4.1inMR890285}
For independent $d$-dimensional random vectors $X_1,\ldots,X_n \sim \P$ and a class of sets $\mathcal{C}$ with finite Vapnik-Chervonenkis (VC) dimension, let $\P_n$ be the empirical distribution of $X_1, \dots, X_n, \ \nu_n:=\sqrt{n}(\P_n-\P)$ and $\sigma(C):=\sqrt{\P(C)(1-\P(C))}$ for all $C\in \mathcal{C}.$ Then, for $\psi(t)=t\sqrt{\log (t^{-2})}$ and any null sequences $\gamma_n,\alpha_n$ satisfying $n\alpha_n \to \infty$ and $n^{-1}\log n \ll \gamma_n \leq \alpha_n,$ we have that 
\begin{align}
    \limsup_n \ \sup 
    \Big\{ \frac{|\nu_n(C)-\nu_n(D)|}{\psi(\sigma(C\triangle D))}:C,D\in \mathcal{C}, \P(C\triangle D)\leq \frac 12, \gamma_n\leq \sigma^2(C\triangle D)\leq \alpha_n\Big\}<\infty,
\end{align}
almost surely.
\end{theorem}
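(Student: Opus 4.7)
The plan is to combine Dudley-type chaining for VC classes with Bernstein's inequality to obtain a localized maximal inequality, and then peel over the variance scale and apply Borel--Cantelli to upgrade the resulting in-probability bound to an almost-sure $\limsup$ statement.

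First, I would reduce the two-set increment to a single empirical process over a symmetric-difference function class. Writing $\nu_n(C)-\nu_n(D) = \nu_n(f_{C,D})$ with $f_{C,D} := \mathbf{1}_{C\setminus D} - \mathbf{1}_{D\setminus C}$, one checks $\|f_{C,D}\|_{L^2(\P)} = \sigma(C\triangle D)$, and the class $\mathcal{F}:=\{f_{C,D}:C,D\in\mathcal{C}\}$ is a VC-subgraph class with uniformly bounded envelope whose VC index is controlled by that of $\mathcal{C}$ via Sauer's lemma. This gives the uniform entropy bound $\log N(\varepsilon,\mathcal{F},L^2(Q))\lesssim v\log(1/\varepsilon)$ for all probability measures $Q$.

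The core step is a localized deviation inequality: for fixed $\sigma^2\in[\log(n)/n,1/2]$ and any sufficiently large $M$,
\[
\P\Big(\sup_{f\in\mathcal{F},\,\|f\|_{L^2(\P)}\leq\sigma}|\nu_n(f)|\geq M\psi(\sigma)\Big)\leq 2\exp\!\big(-cM^2\log(1/\sigma^2)\big),
\]
which I would establish by Dudley chaining along $L^2(\P)$-nets of $\mathcal{F}\cap\{\|f\|_{L^2(\P)}\leq\sigma\}$ and applying Bernstein's inequality at each link rather than a sub-Gaussian tail bound. Keeping the Bernstein variance term in the exponent at each level is what produces the rate $\psi(\sigma)=\sigma\sqrt{\log(1/\sigma^2)}$, as opposed to the crude $\sigma\sqrt{\log n}$ obtained by Hoeffding plus one global union bound; the hypothesis $\sigma^2\gg\log(n)/n$ is precisely what forces the Bennett linear correction $M/\sqrt{n}$ to stay negligible compared to the variance term across every chain level.

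To conclude, I would partition $[\gamma_n,\alpha_n]$ into $O(\log n)$ dyadic slabs $[\sigma_j^2/2,\sigma_j^2]$, apply the previous bound on each slab with the same $M$, and use that $\psi$ is increasing so that all pairs in a slab contribute at most a constant multiple of $\psi(\sigma_j)$. The union bound yields a total tail of order $\log(n)\,n^{-cM^2}$, which is summable in $n$ for $M$ large; Borel--Cantelli along a subsequence $n_k=2^k$, combined with a simple monotonicity argument to bridge between consecutive $n_k$'s, then delivers the almost-sure $\limsup$ bound. The main obstacle is the chaining step: extracting the sharp rate $\psi(\sigma)$ rather than $\sigma\sqrt{\log n}$ requires a careful Bernstein-based chain that exploits the envelope-variance trade-off at every level, and this is also the precise point at which the boundary hypothesis $\gamma_n\gg\log(n)/n$ becomes indispensable.
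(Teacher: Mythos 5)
The paper does not prove this result; it quotes Alexander's Theorem 4.1 from \cite{MR890285} and merely corrects two typos in the original statement in the remark that follows. So the only thing to assess here is whether your attempt would in fact establish the theorem, and there is a concrete gap in the final union-bound-and-Borel--Cantelli step.

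Your localized concentration inequality (which is correct, and is essentially a consequence of Dudley chaining plus Talagrand's inequality for bounded empirical processes) gives, at a fixed variance scale $\sigma^2$, a tail of order $\exp(-cM^2\log(1/\sigma^2))=(\sigma^2)^{cM^2}$. Summing this over the dyadic slabs $\sigma_j^2=2^{-j}\alpha_n\in[\gamma_n,\alpha_n]$ produces a geometric series dominated by the \emph{largest} scale $\sigma_j^2\approx\alpha_n$, so the total tail is of order $\alpha_n^{cM^2}$, not $\log(n)\,n^{-cM^2}$ as you claim. (Your bound $n^{-cM^2}$ would require $\log(1/\sigma_j^2)\gtrsim\log n$, i.e.\ $\sigma_j^2\lesssim 1/n$, but the entire peeling range has $\sigma_j^2\ge\gamma_n\gg\log(n)/n$; you appear to have silently replaced $\log(1/\sigma^2)$ by $\log n$, which is exactly the lossy step your own chaining discussion was designed to avoid.) Since the hypotheses only demand that $\alpha_n$ be a null sequence with $n\alpha_n\to\infty$, it may decay arbitrarily slowly --- take $\alpha_n=1/\log\log n$, for instance --- and then $\alpha_n^{cM^2}$ is not summable for any $M$. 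Passing to the geometric subsequence $n_k=2^k$ does not repair this: $\sum_k\alpha_{n_k}^{cM^2}=\sum_k(\log(k\log 2))^{-cM^2}$ still diverges for every $M$. So direct first Borel--Cantelli, even along a subsequence, is not enough to deliver the almost-sure $\limsup$ bound in the generality stated.

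To close this gap one needs something beyond summability of fixed-$n$ tail probabilities: Alexander's own proof (and the closely related results of Stute on moduli of continuity of empirical processes) relies on a more delicate blocking scheme in $n$ combined with reverse-submartingale/maximal-inequality arguments for the empirical process, so that the a.s.\ statement is inherited from a convergence theorem rather than from a naively summable tail. Also, a small point of hygiene: $\|f_{C,D}\|_{L^2(\P)}^2=\P(C\triangle D)$, not $\sigma(C\triangle D)^2=\P(C\triangle D)(1-\P(C\triangle D))$; these agree up to a $(1+o(1))$ factor under $\P(C\triangle D)\le\alpha_n\to 0$, so this is harmless, but it should be noted. The reduction to the symmetric-difference class and the localized Bernstein/Talagrand deviation bound are fine and are indeed the right ingredients; the failure is confined to the arithmetic of the final union bound and the a.s.\ upgrade.
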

\begin{remark}
Note that in (4.3) of \cite{MR890285}, $n\alpha_n \downarrow$ should be $n\alpha_n \uparrow$ and in (4.4) of \cite{MR890285}, $\alpha_n$ has to be replaced by $\gamma_n,$ see also the proof of Theorem 4.1 on p. 417 of \cite{MR890285}. A consequence of the previous result is that because of $\sigma^2(C\triangle D) \geq \gamma_n \geq 1/n,$ for all sufficiently large $n,$ we have $\psi(\sigma(C\triangle D))\leq \sqrt{\P(C\triangle D)\log n}$ and hence
\begin{align}
    \limsup_n \ \sup 
    \Big\{ \frac{|\nu_n(C)-\nu_n(D)|}{\sqrt{\P(C\triangle D)\log n}}:C,D\in \mathcal{C}, \P(C\triangle D)\leq \frac 12, \gamma_n\leq \sigma^2(C\triangle D)\leq \alpha_n\Big\}<\infty,
    \label{eq.thm4.1.reform}
\end{align}
almost surely.
\end{remark}

\begin{proof}[\textbf{Proof of Theorem 8}]

It is enough to show that the statement holds for $\max_{n>1}$ replaced by $\limsup_n$ if we also can prove that for any finite $N,$
\begin{align}
    \max_{1<n\leq N} \, \sup_{x\in[0,1]} \, \sqrt{\log n} \, \Big| \frac{\wh t_n(x)}{t_n(x)}-1\Big|<\infty.
    \label{eq.thm_estima_to_show_1}
\end{align}
To see this observe that by definition $\sup_x \wh t_n(x)\leq 1.$ By definition, the spread function solves $t_n^2(x)\P_\sX([x\pm t_n(x)])=\log n/n.$ Combining this with the inequality $\P_\sX([x\pm t_n(x)])\leq 2\|p\|_\infty t_n(x)$ yields
\begin{align}
    \inf_x t_n(x)\geq \Big(\frac{\log n}{2n\|p\|_\infty}\Big)^{1/3}.
    \label{eq.tn_ub}
\end{align}
Therefore, 
\begin{align*}
    \max_{1<n\leq N} \, \sup_{x\in[0,1]} \, \sqrt{\log n} \, \Big| \frac{\wh t_n(x)}{t_n(x)}-1\Big|
    \leq \sqrt{\log N}\Big| \big(2N\|p\|_\infty \big)^{1/3}+1\Big|<\infty,
\end{align*}
proving \eqref{eq.thm_estima_to_show_1}. 

It thus remains to prove the statement with $\max_{n>1}$ replaced by $\limsup_{n}.$ The class of all half intervals $\{(-\infty,u]:u\in \mathbb{R}\}$ is VC. Thus, \eqref{eq.thm4.1.reform}, $\gamma_n=\log^2 n/n$ and $\alpha_n = n^{-1/4}$ applied to the class of half intervals gives that
\begin{align}
    \limsup_n \ \sup\bigg\{\sqrt{n}\frac{|\wh \P_\sX^n([a,b])-\P_\sX([a,b])|}{\sqrt{\P_\sX([a,b])\log n}}:0\leq a<b \leq 1,\frac{\log^2 n}{n}\leq \P_\sX([a,b]) \leq n^{-1/4}\bigg\}
    \label{eq.9631}
\end{align}
is almost surely finite. Since $\P_\sX$ is doubling, Lemma 3 and Remark \ref{rem.tnPX} ensure that $\P_\sX([x\pm t_n(x)])\geq \log^2 n/n$ for all $n \geq \exp(36(1\vee\log^2(D))),$ with $D$ the doubling constant of $\P_\sX$. Due to \eqref{eq.tn_ub} and the definition of the spread function, $\P_\sX([x\pm t_n(x)])=\log n/(nt_n(x)^2)\leq (\log n/n)^{1/3}\|p\|_\infty^{2/3}.$ Thus for all sufficiently large $n,$ we have $\log^2 n/n\leq \P_\sX([x\pm t_n(x)])\leq n^{-1/4}.$ Applying \eqref{eq.9631} shows that there exist $a_n(x)$ and a constant $A$ independent of $n,$ such that for any $x\in [0,1],$
\begin{align*}
    \P_\sX([x\pm t_n(x)])=\wh \P_\sX^n([x\pm t_n(x)])+a_n(x)\sqrt{\frac{\P_\sX([x\pm t_n(x)])\log n}{n}}
\end{align*}
and $\sup_x |a_n(x)|\leq A,$ almost surely. 

Suppose now that $t_n(x)> Q\wh t_n(x)$ with $Q:=(1+A/\sqrt{\log n}).$ By Lemma 3, Remark \ref{rem.tnPX}, and by picking $n$ large enough, $t_n(x)< 1/\sqrt{\log n}.$ Using the definitions of $t_n(x)$ and $\wh t_n(x),$ we have 
\begin{align*}
    \frac{\log n}{n}
    &=t_n(x)^2\P_\sX([x\pm t_n(x)]) \\
    &=
    t_n(x)^2\wh \P_\sX^n([x\pm t_n(x)])+a_n(x)t_n(x)\sqrt{\frac{t_n(x)^2\P_\sX([x\pm t_n(x)])\log n}{n}}\\
    &> Q^2 \wh t_n(x)^2\wh \P_\sX^n([x\pm \wh t_n(x)])
    -A\frac{\sqrt{\log n}}{n} \\
    &\geq \Big(Q^2-\frac{A}{\sqrt{\log n}}\Big) \frac{\log n}{n} \\
    &> \frac{\log n}{n},
\end{align*}
almost surely. This is a contradiction. Hence $t_n(x)\leq Q\wh t_n(x),$ almost surely.

Now let $t(x):= Rt_n(x)$ with $R:=(1-A/\sqrt{\log n})^{-1/2}>1.$ Arguing similarly as above, we find that 
\begin{align*}
    \frac{\log n}{n}
    &=t_n(x)^2\P_\sX([x\pm t_n(x)]) \\
    &=
    t_n(x)^2\wh \P_\sX^n([x\pm t_n(x)])+a_n(x)t_n(x)\sqrt{\frac{t_n(x)^2\P_\sX([x\pm t_n(x)])\log n}{n}}\\
    &\leq \frac {t(x)^2}{R^2} \wh \P_\sX^n([x\pm t(x)])
    +A \frac{\sqrt{\log n}}{n},
\end{align*}
almost surely. Rewriting this gives 
\begin{align*}
    t(x)^2\wh \P_\sX^n([x\pm t(x)])\geq R^2\Big(1-\frac{A}{\sqrt{\log n}}\Big)\frac{\log n}{n}= \frac{\log n}{n},
\end{align*}
almost surely. Applying the definition of the estimator $\wh t_n(x)$ yields $\wh t_n(x)\leq t(x)$ and thus $t_n(x)\geq R^{-1}\wh t_n(x),$ almost surely.

Combined with $t_n(x)\leq Q\wh t_n(x),$ this proves 
\begin{align*}
    \sup_x \, \Big|\frac{\wh t_n(x)}{t_n(x)}-1\Big| \leq \Big|1-\frac 1Q\Big|\vee |R-1|,
\end{align*}
almost surely. For any positive number $u, \ |1/\sqrt{u}-1|=|1-\sqrt{u}|/\sqrt{u}=|1-u|/(\sqrt{u}(1+\sqrt{u}))\leq |1-u|/\sqrt{u}.$ Without loss of generality, we can assume that $n$ is sufficiently large such that $\sqrt{\log n}\geq 2A.$ Choosing $u:=1-A/\sqrt{\log n},$ we then have $u\geq 1/2$ and $|R-1|=|1/\sqrt{u}-1|\leq \sqrt{2}A/\sqrt{\log n}.$ Since also $|1-1/Q|=|Q-1|/Q\leq |Q-1|=A/\sqrt{\log n},$ this proves that for any sufficiently large $n,$
\begin{align*}
    \sup_x \, \Big|\frac{\wh t_n(x)}{t_n(x)}-1\Big| \leq \Big|1-\frac 1Q\Big|\vee |R-1|\leq \frac{\sqrt{2}A}{\sqrt{\log n}},
\end{align*}
almost surely. The proof is complete.
\end{proof}
\section{Proofs for Section 3}
\label{annex:simple_distrib}

\begin{lemma}
\label{lem.local_doubling_nonzero_mass}
    If $\P_\sX \in \mP_n(D)$ for some $n > 0$ and $D \geq 2,$ then for any $x \in [0, 1]$ and any $t> 0,\ \P_\sX([x \pm t]) > 0$.
\end{lemma}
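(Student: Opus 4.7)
The plan is to argue by contradiction: suppose there exist $x^* \in [0,1]$ and $t^* > 0$ with $\P_\sX([x^* \pm t^*]) = 0$. Setting $\eta_0 := \sqrt{\log n}\, \sup_{x \in [0,1]} t_n(x)$, which is strictly positive by Lemma \ref{lem.tn_diff}$(ii)$, I would first replace $t^*$ by $t' := \min(t^*, \eta_0) > 0$. Since $[x^* \pm t'] \subseteq [x^* \pm t^*]$, the zero-mass property is preserved, and now $t' \leq \eta_0$, so \eqref{eq.LDP} may be invoked with $\eta = t'$ at any center in $[0,1]$.

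Next I would propagate the zero-mass property along $[0,1]$ by translating the center, exactly in the style of the induction in the proof of Lemma \ref{lem.big_prob}. Proceeding by induction on $k \geq 0$ (as long as $x^* - kt' \in [0,1]$), one obtains
\begin{align*}
    \P_\sX\bigl([x^* - (k+1)t', x^* - (k-1)t']\bigr) \leq D^k \, \P_\sX\bigl([x^* \pm t']\bigr) = 0,
\end{align*}
where the inductive step uses \eqref{eq.LDP} at the shifted center $x^* - kt'$ with $\eta = t'$. The symmetric statement holds for centers $x^* + kt'$. Since $t' > 0$ is fixed, unioning these intervals over $k = 0, 1, \ldots, \lceil 1/t' \rceil$ in both directions covers $[0,1]$ in finitely many steps, yielding $\P_\sX([0, 1]) = 0$, which contradicts the fact that $\P_\sX$ is a probability measure.

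The main obstacle I anticipate is purely bookkeeping: ensuring each shifted center $x^* \pm kt'$ remains in $[0,1]$ so that \eqref{eq.LDP} is valid there, and verifying that the enlarging intervals indeed cover $[0,1]$. Both are automatic once $t'$ has been fixed as a positive constant bounded away from zero. Beyond that, the proof is a direct application of \eqref{eq.LDP}, with no substantial analytic difficulty, which is consistent with the lemma being a basic structural property of locally doubling measures.
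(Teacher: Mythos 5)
Your proof is correct and follows essentially the same argument as the paper's: reduce $t$ to a scale where \eqref{eq.LDP} applies, propagate the zero-mass property by doubling and shifting the center in steps of $t'$, and conclude that $\P_\sX([0,1])=0$, a contradiction. The only cosmetic difference is that you phrase the propagation via the $D^k$-type induction borrowed from the proof of Lemma~\ref{lem.big_prob}, whereas the paper's proof doubles once and then passes to the two subintervals $[x_0\pm t\pm t]\subset[x_0\pm 2t]$; these are equivalent.
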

\begin{proof}
    Suppose that for some $x_0 \in [0, 1]$ and some $t>0, \  \P_\sX([x_0 \pm t]) = 0$. We prove that this implies the contradiction $1=\P_\sX([0, 1]) = 0.$ By considering sub-intervals, one can, without loss of generality, assume that $t\leq \sqrt{\log n}\sup_{x\in [0,1]} t_n(x).$ Therefore one can apply (LDP) in Definition 2 to obtain that $\P_\sX([x_0 \pm 2t]) \leq D\P_\sX([x_0 \pm t]) = 0.$ We have $[x_0 - t \pm t] \subset [x_0 \pm 2t]$ and $[x_0 +t \pm t] \subset [x_0 \pm 2t]$. Therefore $\P_\sX([x_0 -t \pm t]) = \P_\sX([x_0 + t \pm t]) = 0$. One can then repeat the previous step $\lceil 1/t \rceil$ times to obtain that 
    \begin{align*}
        1=\P_\sX([0, 1]) \leq \sum_{i = 0}^{\lceil t^{-1}\rceil} \P_\sX([x - it \pm t]) + \P_\sX([x + it \pm t]) = 0.
    \end{align*}
    Since this is a contradiction, the proof is complete.
\end{proof}

\begin{proof}[\textbf{Proof of Lemma 9}]
Let $u_n := \sqrt{\log n}\sup_{x \in [0, 1]}t_n(x)$. Since $\P_\sX$ admits a Lebesgue density, $x, t \mapsto \P_\sX([x \pm t])$ is continuous on $[0, 1]\times(0, 1]$. By virtue of Lemma \ref{lem.local_doubling_nonzero_mass}, $x \mapsto \P_\sX([x \pm t])$ is strictly positive on $[0, 1]\times(0, 1]$. Therefore $f\colon (x, \eta) \mapsto \P_\sX([x \pm 2\eta])/\P_\sX([x \pm \eta])$ is continuous on $[0, 1] \times (0, 1].$ In particular $f$ is continuous on the compact set $[0, 1]\times [u_n, 1].$ Therefore $f(x, \eta)$ is upper bounded on $[0, 1]\times [u_n, 1]$ by some constant $D'$ depending on $\P_\sX$. Also, for all $\eta \leq u_n, \ f(x, \eta) \leq D$ by assumption. Combining both, means that for all $(x, \eta) \in [0, 1]\times (0, 1], \ f(x, \eta) \leq D_n(\P_\sX):=D'\vee \sup_{\eta\leq u_n}\sup_{x\in [0,1]} f(x, \eta)\leq D'\vee D$ and hence $\P_\sX \in \mP_G(D_n(\P_\sX))$.
\end{proof}

The following lemma provides a sufficient condition for a distribution $\P_\sX \in \mM$ with monotone density $p$ to be doubling.
\begin{lemma}[Lemma 3.2 from \cite{cruz1996piecewise}]
    \label{lem.char_doubling_monotonic}
    Let $w$ be a locally integrable, monotonic function on $\RR_+$ and define $F\colon x \mapsto \int_0^{x}w(u)\, du.$ Denote by $\mu$ the measure $A \in \mB(\RR_+) \mapsto \int_A w(u) \, \dd u \in \RR_+.$
    If $w$ is increasing, then $\mu$ is doubling if and only if there exists a constant $\gamma \in (0, 1/2)$ such that for all $x,$ 
    \begin{align*}
        \gamma F(2x) \leq F(x).
    \end{align*}
    If $w$ is decreasing, then $\mu$ is doubling if and only if there exists a constant $\gamma \in (1, 2)$ such that, for all $x,$ 
    \begin{align*}
        \gamma F(x) \leq F(2x),
    \end{align*}
    where doubling means that there exists a constant $D' \geq 2$ such that for all $x >0$ and all $\eta > 0, \ \mu([x \pm 2\eta]) \leq D'\mu([x \pm \eta]).$
\end{lemma}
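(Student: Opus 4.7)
The lemma characterizes doubling of $\mu$ through a dilation condition on its cumulative distribution $F$. I would prove the increasing case in detail; the decreasing case follows by a parallel argument with the roles of the two endpoints swapped.

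For the forward implication with $w$ increasing, evaluate the doubling property at $x = 0$ with arbitrary $\eta > 0$: since $[-2\eta, 2\eta] \cap \mathbb{R}_+ = [0, 2\eta]$ and $[-\eta, \eta] \cap \mathbb{R}_+ = [0, \eta]$, one gets $F(2\eta) \leq D' F(\eta)$, so $\gamma := 1/D'$ satisfies $\gamma F(2x) \leq F(x)$ for all $x$.

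For the converse, assume $\gamma F(2x) \leq F(x)$ for some $\gamma > 0$. Iterating gives $F(2^k x) \leq \gamma^{-k} F(x)$, so $F$ has polynomial-type growth. Combining this with monotonicity of $w$ (which makes $F$ convex) yields the pointwise bound
\begin{align*}
    2x\, w(2x) \leq F(4x) - F(2x) \leq F(4x) \leq F(x)/\gamma^2 \leq x\, w(x)/\gamma^2,
\end{align*}
hence $w(2x) \leq w(x)/(2\gamma^2)$. I would then compare $\mu([x \pm 2\eta])$ and $\mu([x \pm \eta])$ by a three-case analysis on the relative sizes of $x$ and $\eta$. When $x \leq \eta$, the bound is immediate: $\mu([x \pm 2\eta]) \leq F(3\eta) \leq F(4\eta) \leq \gamma^{-2} F(\eta) \leq \gamma^{-2} \mu([x \pm \eta])$. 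The case $\eta < x \leq 2\eta$ is analogous with slightly different constants. The main obstacle is the remaining case $x > 2\eta$, where both intervals avoid $0$: here monotonicity of $w$ gives $\mu([x \pm 2\eta]) \leq 4\eta\, w(x+2\eta)$ and $\mu([x \pm \eta]) \geq 2\eta\, w(x-\eta)$, so the ratio is at most $2\, w(x+2\eta)/w(x-\eta)$, which (since $x + 2\eta \leq 2x$ and $x - \eta \geq x/2$) is controlled by iterating the pointwise doubling $w(2y) \leq w(y)/(2\gamma^2)$ a bounded number of times depending only on $\gamma$.

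The decreasing case runs on the same plan: the hypothesis $\gamma F(x) \leq F(2x)$ with $\gamma \in (1, 2)$ says that the mass in $[x, 2x]$ is at least $(\gamma - 1) F(x)$, a fixed positive fraction of $F(x)$; this is the symmetric analogue, for concave $F$, of the polynomial growth used above. The three-case comparison proceeds as before, with the inequality $w(x + 2\eta) \leq C\, w(x - \eta)$ replaced by its reverse, and the same translation from a global dilation property of $F$ to a local pointwise comparison of $w$-values at scales differing by a constant factor.
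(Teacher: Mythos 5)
The paper does not prove this lemma; it is quoted verbatim as Lemma~3.2 from the cited reference \cite{cruz1996piecewise}, so there is no ``paper's own proof'' to compare against. I therefore assess your argument on its own terms.

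Your argument for the increasing case is complete and correct. The derivation of the pointwise comparison $w(2x)\leq w(x)/(2\gamma^2)$ from the two elementary facts $F(4x)-F(2x)\geq 2x\,w(2x)$ and $F(x)\leq x\,w(x)$ is clean, and the three-case split handles the boundary effect at $0$ properly. One small point worth making explicit in case $\eta<x\leq 2\eta$: you should lower-bound $\mu([x\pm\eta])=\int_{x-\eta}^{x+\eta}w$ by $\eta\,w(x)\geq\eta\,w(\eta)$ and upper-bound $\mu([x\pm 2\eta])=F(x+2\eta)\leq 4\eta\,w(4\eta)$, then use the pointwise comparison twice to relate $w(4\eta)$ back to $w(\eta)$; simply invoking ``analogous'' glosses over the fact that here the lower bound is no longer of the form $F(\eta)$.

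The decreasing case is where the sketch is too thin. You claim ``the same translation from a global dilation property of $F$ to a local pointwise comparison,'' but the mechanism is genuinely different: for decreasing $w$ both inequalities you used in the increasing case flip sign ($F(x)\geq x\,w(x)$ and $F(4x)-F(2x)\leq 2x\,w(2x)$), so you cannot re-run the same chain. The correct route is to stack two applications of the hypothesis:
\begin{align*}
    2z\,w(2z)\ \geq\ F(4z)-F(2z)\ \geq\ (\gamma-1)F(2z)\ \geq\ (\gamma-1)\gamma\,F(z)\ \geq\ (\gamma-1)\gamma\,z\,w(z),
\end{align*}
which yields $w(z)\leq\tfrac{2}{\gamma(\gamma-1)}w(2z)$, the reversed pointwise comparison you need. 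With that in hand your three-case comparison goes through as you describe; note also that the cases with $x\lesssim\eta$ are actually trivial for decreasing $w$, since concavity already gives $F(4\eta)\leq 4F(\eta)$ without any hypothesis, so the condition $\gamma F(x)\leq F(2x)$ is only doing work in the interior case $x>2\eta$.
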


\begin{proof}[\textbf{Proof of Lemma 10}]
We first prove that $\P_\sX \in \mP_G(4\bar p/\underline p)$. To see that, for any $x \in [0, 1],$ consider $\eta > 0$. We have $(4\eta\wedge 1) \ol p \geq \P_\sX([x \pm 2\eta]) \geq \P_\sX([x \pm \eta]) \geq (\eta\wedge 1) \underline p.$ Therefore
\begin{align*}
    \frac{\P_\sX([x \pm 2\eta])}{\P_\sX([x \pm \eta])} \leq \frac{(4\eta\wedge 1) \bar p}{(\eta\wedge 1) \underline p} \leq 4\frac{\ol p}{\underline p}.
\end{align*}
We now prove (9). For all $n > 2$ and $x \in [0, 1],\ t_n(x) < 1$ and $t_n(x) \ul p \leq \P_\sX([x \pm t_n(x)]) \leq 2t_n(x)\ol p$. By using the definition of the spread function $t_n,$ we obtain
\begin{align*}
    t_n(x)^3 \ul p \leq \frac{\log n}n \leq 2t_n(x)^3 \ol p,
\end{align*}
which, once rearranged, yields the desired inequality. 
\end{proof}

\begin{proof}[\textbf{Proof of Lemma 11}]
To simplify the notation, we write $\P=\P_{\sX}.$ Let $\alpha > 0$ and $p \colon x \mapsto (\alpha + 1)x^\alpha\mathds{1}(x \in [0, 1])$.

The function $w \colon x \mapsto (\alpha + 1)x^\alpha$ is increasing on $(0,\infty)$ and for any $x>0,$
\begin{align*}
    \, \P([0, x])/\P([0, 2x]) =  1/2^{\alpha + 1} \in (0, 1/2).
\end{align*}
Therefore a measure $\mu$ with Lebesgue density $w$ is doubling in the sense of Lemma \ref{lem.char_doubling_monotonic}. Denote the doubling constant by $D.$ The restriction of $\mu$ to the interval $[0, 1]$ yields the distribution $\P.$ We show now that $\P \in \mP_G(D^3).$ To see this, we distinguish two cases, either $\eta > 1$ or $0< \eta \leq 1$. If $\eta > 1$ then $\P([x \pm 2\eta]) = 1 = \P_\sX([x \pm \eta]) \leq (D\vee 2)\P_\sX([x \pm \eta])$ which proves (LDP) of Definition 2 in this case. Else, if $\eta \in (0, 1],$ for any $x \in [0, 1],$ we have $\P_\sX([x \pm \eta]) = \mu([x \pm \eta]\cap[0, 1]) \geq D^{-2}\mu([x \pm \eta]).$ Consequently,
\begin{align*}
    \frac{\P_\sX([x \pm 2\eta])}{\P_\sX([x \pm \eta])} = \frac{\mu([x \pm 2\eta]\cap[0, 1])}{\mu([x \pm \eta]\cap[0, 1])} \leq D^2\frac{\mu([x \pm 2\eta])}{\mu([x \pm \eta])} \leq D^3
\end{align*}
and thus $\P \in \mP_G(D^3).$

By assumption $n \geq 9$. According to Lemma \ref{lem.tn_diff} and Remark \ref{rem.monotonicity_tn_borders}, we can split $[0, 1]$ into three intervals $I_1 := [0, a_n),\ I_2 := [a_n, b_n]$ and $I_3 := (b_n, 1]$ such that $t_n(a_n) = a_n, \ t_n(b_n) = 1-b_n,$ on $I_1,\ t_n(x) > x,$ on $I_2,\ t_n(x) \leq x\vee(1-x)$ and on $I_3,\ t_n(x) \geq 1-x.$ Moreover, from the expression for the derivative $t_n^\prime$ in Lemma \ref{lem.tn_diff} $(iii)$ and from the fact that $p$ is strictly increasing, we know that $t_n$ is strictly decreasing on $[0, b_n)$ and $t_n$ is strictly increasing on $(b_n, 1]$. We now derive expressions for the boundaries $a_n, b_n$.\\
\textit{\underline{Derivation of $a_n$}}: The solution $a_n$ of the equation $t_n(x) = x$ must satisfy $2^{\alpha+1}a_n^{\alpha + 3} = \frac{\log n}n,$ which can be rewritten as
\begin{align*}
    a_n = \Big(\frac{\log n}{2^{\alpha+1}n}\Big)^{1/(\alpha+3)}.
\end{align*}
\textit{\underline{Derivation of $b_n$}}: We solve the equation $t_n(b_n) = 1-b_n$ for $b_n \in [0, 1]$. Using the definition of the spread function, we obtain
\begin{align}
    \label{eq.derivation_b_n}
    t_n(b_n)^2\big(1 - (1 - 2t_n(b_n))^{\alpha + 1}\big) = \frac{\log n}n.
\end{align}
By Lemma \ref{lem.tn_diff} $(ii),$ we know that for $n>9, \ b_n>1/2$ and therefore $t_n(b_n)<1/2.$ Since $\alpha>0,$ Bernoulli's inequality guarantees $1 - 2(\alpha+1)t_n(b_n) \leq (1 - 2t_n(b_n))^{\alpha + 1}\leq 1 - 2t_n(b_n).$ Rewriting yields
\begin{align*}
    \Big(\frac{\log n}{2(\alpha + 1)n}\Big)^{1/3} \leq t_n(b_n) = 1-b_n \leq \Big(\frac{\log n}{2n}\Big)^{1/3}.
\end{align*}
We now derive the behaviour of $t_n$ in each of those regimes.\\
\underline{\textit{First regime, $t_n(x) \geq x$}}: Simple calculus leads to
\begin{align}
    t_n(0) &= \Big(\frac{\log n}n\Big)^{1/(\alpha + 3)} \ \text{ and} \  \ 
    t_n(a_n) = \Big(\frac{\log n}{2^{\alpha+1}n}\Big)^{1/(\alpha+3)}.
    \label{eq.1479}
\end{align}
As mentioned before, the spread function $t_n$ is strictly decreasing on $I_1$ and for all $x \in I_1,\ t_n(a_n) \leq t_n(x) \leq t_n(0).$ Hence,
\begin{align}
    \label{eq.bounds_first_regime}
    \Big(\frac{\log n}{2^{\alpha+1}n}\Big)^{1/(\alpha+3)} \leq t_n(x) \leq \Big(\frac{\log n}n\Big)^{1/(\alpha + 3)},
\end{align}
completing the proof of (10).\\
\textit{\underline{Second regime, $t_n(x) \leq x \wedge (1 - x)$}}: For $x \in I_2,$ we have 
\begin{align*}
    \P\big([x \pm t_n(x)]\big) &= \int_{x - t_n(x)}^{x + t_n(x)} (\alpha + 1)u^{\alpha}\, du.
\end{align*}
Using that in this regime $t_n(x) \leq x$ implies $\int_{x-t_n(x)}^{x+t_n(x)}u^{\alpha}\, d u \leq \int_{x-t_n(x)}^{x + t_n(x)}(x + t_n(x))^{\alpha} \, d u\leq 2t_n(x) (2x)^\alpha.$ Similarly, $\int_{x-t_n(x)}^{x+t_n(x)}u^{\alpha}\, d u\geq \int_x^{x+t_n(x)}x^{\alpha}\, d u=t_n(x) x^\alpha.$ Using the definition of $t_n$ and the previous inequalities, we get
\begin{align*}
    (\alpha+1)x^{\alpha}t_n(x)^3 \leq\frac{\log n}n &\leq 2^{\alpha + 1}(\alpha + 1)x^{\alpha}t_n(x)^3,
\end{align*}
which, once rearranged, yields, for all $x \in I_2,$
\begin{align}
    \label{eq.bounds_second_regime}
    \Big(\frac{\log n}{2^{\alpha+1}(\alpha+1)nx^{\alpha}}\Big)^{1/3} \leq t_n(x) \leq \Big(\frac{\log n}{(\alpha + 1)nx^{\alpha}}\Big)^{1/3}.
\end{align}
\underline{\textit{Third regime, $t_n(x) \geq 1- x$}}: As mentioned before, in this regime $t_n$ is strictly increasing. We already have suitable bounds for the value of $t_n(b_n)$ and now need an upper bound for the value of $t_n(1)$. We have $\P_\sX([1 - t_n(1), 1]) = 1 - (1-t_n(1))^{\alpha + 1}\geq 1-(1-t_n(1))=t_n(1)$. Therefore, $t_n(1)^3\leq \log n/n$ and $t_n(1)\leq (\log n/n)^{1/3}.$ Combining the bounds for $t_n(b_n)$ and $t_n(1)$ yields that for any $x \in I_3,$
\begin{align}
    \label{eq.bounds_third_regime}
    \Big(\frac{\log n}{2(\alpha + 1)n}\Big)^{1/3} \leq t_n(x) \leq \Big(\frac{\log n}{n}\Big)^{1/3}.
\end{align}
Using that $b_n>1/2$ and $2^\alpha x^\alpha \geq 1$ for $x\geq 1/2,$ the bounds in the second and third regime can be combined into
\begin{align*}
    \Big(\frac{\log n}{2^{\alpha+1}(\alpha+1)nx^{\alpha}}\Big)^{1/3} \leq t_n(x) \leq \Big(\frac{\log n}{nx^{\alpha}}\Big)^{1/3}  \ \ \text{for } a_n \leq x \leq 1.
\end{align*}
This proves (11).

Therefore, Theorem 4 applies.
\end{proof}

\section{Proofs for Section 5}
\label{supp.sec5}

\begin{proof}[\textbf{Proof of Lemma 12}]
By Theorem 4, $\int_0^1(\wh f_n(x) - f_0(x))^2q(x)\, d x \leq K\int_0^1t_n^{\P}(y)^2q(y) \, dy$ with probability tending to one as $n\to \infty$. To obtain the first inequality with $K''=4K^2,$ it is, therefore, enough to show that 
\begin{align}
    \int_0^1 t_n^{\P}(y)^2 q(y) \, dy
    \leq 4\int_0^1 t_n^{\P}(x) \Q\big([x\pm t_n^{\P}(x)]\big)\, dx.
    \label{eq.TL_2}
\end{align}
To verify \eqref{eq.TL_2}, observe that for any $y, \ t_n^{\P}(y)\leq 1$ and hence
\begin{align}
    \int_0^1 t_n^{\P}(y)^2 q(y) \, dy
    \leq
    2\int_0^1 \int_{[y\pm t_n^{\P}(y)/2]\cap [0,1]} \, dx \, t_n^{\P}(y) q(y) \, dy.
    \label{eq.TL_3}
\end{align}
By construction of the second integral, we have that $|x-y|\leq t_n^{\P}(y)/2.$ From Lemma \ref{lem.tn_diff}, we know that $t_n^{\P}$ is $1$-Lipschitz and, therefore, $|t_n^{\P}(x)-t_n^{\P}(y)|\leq |x-y|\leq t_n^{\P}(y)/2,$ implying $t_n^{\P}(y)/2\leq t_n^{\P}(x)\leq 3t_n^{\P}(y)/2$ so that $x \in [y \pm t_n^{\P}(y)/2]$ implies $y \in [x \pm t_n^{\P}(x)]$. This proves the inclusions
\begin{align*}
    I:=\Big\{(x,y): (x,y)\in \big[y\pm t_n^{\P}(y)/2\big]\cap [0,1] \times [0,1]\Big\}
    &\subseteq 
    \Big\{(x,y): (x,y)\in [y\pm t_n^{\P}(x)]\cap [0,1] \times [0,1]\Big\} \\
    &\subseteq 
    \Big\{(x,y): (x,y)\in [0,1] \times [x\pm t_n^{\P}(x)]\cap [0,1] \Big\}.
\end{align*}
Combined with \eqref{eq.TL_3} and $t_n^{\P}(y)/2\leq t_n^{\P}(x),$ it follows that
\begin{align*}
    \int_0^1 t_n^{\P}(y)^2 q(y) \, dy
    &\leq
    4\int_I t_n^{\P}(x) q(y) \, dy \, dx \\
    &=4\int_0^1 t_n^{\P}(x) \int_{[x\pm t_n^{\P}(x)]\cap [0,1]} q(y) \,dy \, dx \\
    &= 4\int_0^1 t_n^{\P}(x) \Q\big([x\pm t_n^{\P}(x)]\big) \, dx,
\end{align*}
completing the proof for \eqref{eq.TL_2} and therefore establishing the first inequality of the lemma. 

To prove the second inequality, observe that $$\frac 1{t_n^{\P}(x)}=\sqrt{\frac n{\log n} \P([x\pm t_n^{\P}(x)])}\leq \sqrt{2\frac n{\log n} t_n^{\P}(x) \|p\|_\infty},$$ which can be rewritten into $1/t_n^{\P}(x)\leq (2 \|p\|_\infty n/{\log n} )^{1/3}.$ 
\end{proof}

\begin{proof}[\textbf{Proof of Lemma 13}]

In Lemma 11, we already checked the conditions of Theorem 4.

We first prove the result for $\alpha<1$ using Lemma 12. For $\alpha \in (0,1),$ notice that since $t_n^{\P}(x)\leq 1,$ we have for any $x\leq 1/2,$ $\P([x\pm t_n^{\P}(x)])\geq \int_x^{x+t_n^{\P}(x)/2} (\alpha+1) u^\alpha \, du\geq x^\alpha t_n^{\P}(x)/2.$ Similarly, for any $x\geq 1/2,$ $\P([x\pm t_n^{\P}(x)])\geq \P([1/2\pm t_n^{\P}(x)])\geq (1/2)^\alpha t_n^{\P}(x)/2.$ Combining both cases gives the lower bound $\P([x\pm t_n^{\P}(x)])\geq 2^{-\alpha} x^\alpha t_n^{\P}(x)/2.$ Since also $\Q([x\pm t_n^{\P}(x)])\leq 2t_n^{\P}(x)$ and $\|p\|_\infty=(1+\alpha),$ Lemma 12 yields that, for $\alpha <1,$
\begin{align*}
    \int_0^1 \big( \wh f_n(x)-f_0(x)\big)^2 q(x) \, dx
    &\leq 2^{1/3} K''\Big(\frac{\log n}{n}\Big)^{2/3}\|p\|_\infty^{1/3} \int_0^1 \frac{\Q([x\pm t_n^{\P}(x)])}{\P([x\pm t_n^{\P}(x)])}  \, dx \\
    &\leq 2^{\alpha+7/3} K''\Big(\frac{\log n}{n}\Big)^{2/3} (1+\alpha)\int_0^1 x^{-\alpha} \, dx \\
    &\lesssim \Big(\frac{\log n}{n}\Big)^{2/3},
\end{align*}
with probability tending to one as $n\to \infty.$

Due to the infinite integral, the previous argument does not extend to the case $\alpha\geq 1.$ Using a refined argument based on the bounds for the spread function in Example 2, we now prove the claim for any $\alpha>0.$

Following Example 2, for any $\alpha > 0,$ let $a_n = (\log n/n2^{\alpha + 1})^{1/(\alpha + 3)} \asymp (\log n/n)^{1/(\alpha + 3)},$ and observe that
\begin{align*}
\begin{split}
    \int_0^1 \Big( \wh f_n(x)-f_0(x)\Big)^2 q(x) \, dx &\lesssim \int_0^1 t_n^{\P}(x)^2 \, d x\\
    & \lesssim a_n\Big(\frac{\log n}n\Big)^{2/(\alpha + 3)} + \int_{a_n}^{1} \Big(\frac{\log n}{nx^\alpha}\Big)^{2/3} \, d x\\
    & \lesssim \Big(\frac{\log n}n\Big)^{3/(\alpha + 3)} + \Big(\frac{\log n}n\Big)^{2/3}\underbrace{\int_{a_n}^1 x^{-2\alpha/3}\, dx}_{A_n},
\end{split}
\end{align*}
with probability tending to one as $n\to \infty.$ If $\alpha = 3/2,$ then $A_n = -\log(a_n) \asymp \log n$; if $\alpha \neq 3/2,$ then $A_n \asymp |1 - a_n^{1 - 2\alpha/3}|$. Combining the two cases yields the claim.
\end{proof}

\begin{proof}[\textbf{Proof of Theorem 14}]
Theorem 8 shows that there are numbers $N, M$ and an event $A$ that has probability one, such that on $A,$ we have $\max_{n\geq N,m\geq M}\sup_{x\in [0,1]}|\wh t_n^{\ \P}(x)/t_n^{\P}(x)-1|\vee |\wh t_m^{\ \Q}(x)/t_m^{\Q}(x)-1| \leq 1/2.$ Let us now always work on the event $A$. Note that
\begin{align*}
    \max_{n\geq N,m\geq M}\, \sup_{x\in [0,1]} \  \frac{\wh t_n^{\ \P}(x)}{t_n^{ \P}(x)}
    \vee 
    \frac{t_n^{\P}(x)}{\wh t_n^{\ \P}(x)}
    \vee 
    \frac{\wh t_m^{\ \Q}(x)}{t_m^{\Q}(x)}
    \vee 
    \frac{t_m^{\Q}(x)}{\wh t_m^{\ \Q}(x)}
    \leq 2.
\end{align*}
 In particular, we have that $\tfrac 12 (t_n^{\P}(x) \wedge t_m^{\Q}(x))\leq \wh t_n^{\ \P}(x) \wedge \wh t_m^{\ \Q}(x)\leq 2(t_n^{\P}(x) \wedge t_m^{\Q}(x)).$ By Theorem 4, there exists a $K'$ such that the event $A_{n,m}(f),$ defined through
 \begin{align*}
     \sup_x \frac{|\wh f_n^{(1)}(x) - f(x)|}{t_n^{\P}(x)}
     \vee \frac{|\wh f_m^{(2)}(x) - f(x)|}{t_m^{\Q}(x)}\leq K',
 \end{align*}
 satisfies $\sup_{f\in \Lip(1-\delta)} \P_{\!f}(A_{n,m}(f))\to 1$ as $n\to \infty$ and $m\to \infty.$ Thus on the event $A_{n,m}(f)\cap A,$
 \begin{align*}
     \frac{|\wh f_{n,m}(x) - f(x)|}{t_n^{\P}(x) \wedge t_m^{\Q}(x)}&\leq 2\frac{|\wh f_n^{(1)}(x) - f(x)|\mathds{1}_{\P, \Q}(x)+|\wh f_m^{(2)}(x) - f(x)|(1 - \mathds{1}_{\P, \Q}(x))}{\wh t_n^{\ \P}(x) \wedge \wh t_m^{\ \Q}(x)} \\
     &\leq 2K'\frac{t_n^{\P}(x)}{\wh t_n^{\ \P}(x)}+2K'\frac{t_m^{\Q}(x)}{\wh t_m^{\ \Q}(x)}\\
     &\leq 8K',
 \end{align*}
 where $\mathds{1}_{\P, \Q}(x) := \mathds{1}(\wh t_n^{\ \P}(x)\leq \wh t_m^{\ \Q}(x)).$ Taking $K=8K'$ shows then that
 \begin{align*}
      \sup_{f\in \Lip(1-\delta)} \ \P_{\!f} \left(\sup_{x \in [0, 1]} \frac{|\wh f_{n,m}(x) - f(x)|}{t_n^{\P}(x) \wedge t_m^{\Q}(x)} > K\right)
      \leq \sup_{f\in \Lip(1-\delta)} 
      \ \P_{\!f}(A_{n,m}(f)^c)+\ \P_{\!f}(A^c) \to 0,
 \end{align*}
 as $n\to \infty$ and $m\to \infty.$
\end{proof}

\begin{proof}[\textbf{Proof of Theorem 15}]
To shorten the notation, we suppress the dependence of $\P_\sX^n$ and $\Q_\sX^n$ on $n,$ write $N:=n+m$ for the size of the combined sample, and set $t_{n,m}(x):=t_n^{\P}(x)\wedge t_m^{\Q}(x)$. In a first step of the proof, we construct a number of disjoint intervals on which the mixture distribution $\ol \P =\tfrac nN\P_\sX+\tfrac mN\Q_\sX$ assigns sufficiently much mass. Let $\psi_N = (\log N/N)^{1/3}$ and $M_N=\lceil 1/(2\psi_N) \rceil.$ Furthermore, let $N_0$ be the smallest positive integer, such that for all $N\geq N_0,$
\begin{align}
    N^{1/4}\leq \frac{1}{6(2C_\infty-1)}\Big(\frac{N}{\log N}\Big)^{1/3}
    \quad \text{and} \ \ \psi_N\leq \frac 14.
    \label{eq.345678}
\end{align}
Clearly, $N_0$ only depends on $C_\infty.$ We now prove the theorem for all $N\geq N_0.$ The second constraint implies that $M_N\psi_N \leq (1/(2\psi_N)+1)\psi_N\leq 3/4.$

For $1 \leq k \leq M_N,$ consider the intervals $I_k := [(2k-1)\psi_N \pm \psi_N].$ By construction, $\cup_{k=1}^{M_N} I_k\supseteq [0,1].$ Define moreover $s_N := |\{k \in \{1,\ldots, M_N\}: \ol \P(I_k) \geq \psi_N\}|$ and $\alpha_N := s_N/M_N.$ We now derive a lower bound for the fraction $\alpha_N.$ Using $M_N\psi_N \leq 3/4$ in the last step, we have
\begin{align*}
    1 \leq \sum_{k = 1}^{M_N} \ol \P(I_k) &\leq \sum_{k:\, \ol \P(I_k) \leq \psi_N} \psi_N + \sum_{k:\, \ol \P(I_k) > \psi_N} \ol \P(I_k)\\
    &\leq (M_N - s_N)\psi_N + 2s_N C_\infty \psi_N\\
    &= (1-\alpha_N)M_N \psi_N + 2\alpha_N M_NC_\infty \psi_N \\
    &\leq \big(1+\alpha_N (2C_\infty-1)\big)\frac 34.
\end{align*}
Rewriting this, we find
\begin{align*}
    \alpha_N \geq \frac{1}{3(2C_\infty-1)}=: 2b.
\end{align*}
Thus, there exist at least $b(N/\log N)^{1/3}$ intervals $I_k$ such that $\ol \P(I_k) \geq \psi_N$. By a slight abuse of notation, denote by $k_1, \dots, k_{s_N}$ the indexes for which $\ol \P(I_k) \geq \psi_N.$ Together with \eqref{eq.345678}, $s_N\geq b(N/\log N)^{1/3}\geq N^{1/4}$ for all $N\geq N_0.$ Write $x_j:=(2k_j-1)\psi_N$ for the centre of the interval $I_{k_j}$ and observe that using the definition of the spread function,
\begin{align*}
 t_{n,m}(x_j)^2\ol \P(I_k)
 &\leq t_n^{\P}(x_j)^2 \frac{n}{N}\P\big([x_j\pm t_n^{\P}(x_j)]\big)
 + t_m^{\Q}(x_j)^2 \frac{n}{N}\Q\big([x_j\pm t_m^{\Q}(x_j)]\big) \\
 &\leq \frac{\log n+\log m}{N} \\
 &\leq 2\frac{\log N}{N}.
\end{align*}
If $t_{n,m}(x_j)> \sqrt{2}\psi_N,$ then, $t_{n,m}(x_j)\ol \P(I_k) > 2 \psi_N^3=2\log N/N,$ which is a contradiction. Therefore, $t_{n,m}(x_j)\leq \sqrt{2}\psi_N$ for all $j=1,\ldots,s_N.$

We now apply the multiple testing lower bound together with Theorem 2.5 from \cite{nonparest}. In order to do so, we construct $s_N+1 >0$ hypotheses $f_0, \dots, f_{s_N} \in \Lip(1)$ such that,
\begin{enumerate}
    \item[$(i)$] for all $i,j \in \{0, \dots, s_N\},$ with $i\neq j,$ we have $$\Big\|\frac{f_i - f_j}{t_{n,m}}\Big\|_{\infty} \geq \frac {1}6,$$
    \item[$(ii)$] $$\frac 1{s_N} \sum_{i=1}^{s_N} \KL(\P_{f_i}||\P_{f_0}) \leq \frac {\log s_N}9,$$
\end{enumerate}
where $\KL(\cdot ||\cdot)$ denotes the Kullback-Leibler divergence and $\P_{\!f}$ is the distribution of the data in the nonparametric regression model with covariate shift (19). Since the weighted sup-norm loss $\ell(f,g)=\|(f-g)/t_{n,m}\|_\infty$ defines a distance, the result follows from Theorem 2.5 in \cite{nonparest} once we have established $(i)$ and $(ii).$

Recall that $x_j:=(2k_j-1)\psi_N$ are the centers of the $s_N$ intervals $I_{k_1},\ldots,I_{k_{s_N}}.$ Set $f_0 = 0$ and for any $i = 1,\dots, s_N$, consider the function $f_i \colon x\mapsto (t_{n,m}(x_i)/6 - |x - x_i|)_+,$ with $(z)_+:=\max(z,0).$ By construction, $f_i \in \Lip(1).$ As shown above, $t_{n,m}(x_i)\leq 2\psi_N.$ This implies that the support of $f_i$ is contained in the interval $[x_i - t_{n,m}(x_i)/6, x_i + t_{n,m}(x_i)/6] \subset I_{k_i}.$ Thus, for different $i,j,$ the functions $f_i$ and $f_j$ have disjoint support. Since $\|f_i\|_\infty =  t_{n,m}(x_i)/6,$ we obtain for $i\neq j,$
\begin{align*}
    \Big\|\frac{f_i - f_j}{t_{n,m}}\Big\|_\infty \geq \frac{f_i(x_i)}{t_{n,m}(x_i)} = \frac{1}{6},
\end{align*}
and this proves $(i).$

To verify $(ii),$ we first show the known formula 
\begin{align}
    \KL(\P_{\!f},\P_g)=\frac n{2} E_{X\sim \P}[(f(X)-g(X))^2]
    +\frac m{2} E_{X\sim \Q}[(f(X)-g(X))^2].
    \label{eq.KL_basic_id}
\end{align}
that holds for all $L^2[0,1]$-functions $f,g.$ To see this, observe that for any function $h,$ 
\begin{align*}
    \P_h=\bigotimes_{i=1}^n \P_{h,i}\otimes \bigotimes_{j=1}^m \Q_{h,j},    
\end{align*}
 where $\P_{h,i}$ denotes the distribution of the $i$-th observation $Y_i=h(X_i)+\varepsilon_i$ with $X_i\sim \P$ and $Q_{h,j}$ denotes the $(j+n)$-th observation $Y_{j+n}=h(X_{j+n})+\varepsilon_{j+n}$ with $X_{j+n}\sim \Q.$ If $R_1, S_1$ are two probability measures defined on the same measurable space and $R_2, S_2$ are two probability measures defined on the same measurable space, then $\KL(R_1\otimes R_2, S_1 \otimes S_2)=\KL(R_1, S_1)+\KL(R_2, S_2),$ see e.g.\ $(iii)$ on p.85 in \cite{nonparest}. Therefore, $\KL(\P_{\!f},\P_g)=\sum_{i=1}^n \KL(\P_{f,i},\P_{g,i})+\sum_{j=1}^m \KL(\Q_{f,j},\Q_{g,j}).$ If $\P_{h,i|X_i}$ denotes the conditional distribution of $Y_i=h(X_i)+\varepsilon_i$ given $X_i,$ it follows from the definition of the Kullback-Leibler divergence that $\KL(\P_{f,i},\P_{g,i})=E_{X\sim \P}[\KL(\P_{f,i|X_i},\P_{g,i|X_i})].$ By using that $\varepsilon_i \sim \mN(0, 1),$ some elementary computations show moreover that $\KL(\P_{f,i|X_i},\P_{g,i|X_i})=(f(X_i)-g(X_i))^2/2,$ see also Corollary 2.1 in \cite{nonparest}. By following the same arguments, $\KL(\Q_{f,j},\Q_{g,j})=E_{X\sim \Q}[(f(X)-g(X))^2]/2.$ Putting everything together proves the identity \eqref{eq.KL_basic_id}.

Now, we use that $f_0=0, \ \|f_j\|_\infty \leq t_{n,m}(x_j)/4,$ $t_{n,m}(x)=t_n^{\P}(x)\wedge t_m^{\Q}(x),$ and the definition of the spread function, to obtain for $j = 1, \dots, M_N,$
\begin{align*}
    \KL(\P_{f_j}||\P_{f_0}) &= \frac n {2}\E_{X\sim \P}\big[f_j(X)^2\big]+\frac m {2}\E_{X\sim \Q}\big[f_j(X)^2\big]\\
    &\leq \frac n{2}\int_{x_j - t_n^{\P}(x_j)}^{x_j + t_n^{ \P}(x_j)}f_j(t)^2p(t) \, \, d t
    + \frac m{2}\int_{x_j - t_m^{\Q}(x_j)}^{x_j + t_m^{\Q}(x_j)}f_j(t)^2p(t) \, \, d t\\
    &\leq \frac{n}{72} t_n^{\P}(x_j)^{2}\P\big([x_j \pm t_n^{\P}(x_j)]\big)+\frac{m}{72} t_m^{\Q}(x_j)^{2}\P\big([x_j \pm t_m^{\Q}(x_j)]\big)\\
    &\leq \frac{\log n+\log m}{72} \\
    &\leq \frac{\log N}{36}.
\end{align*}
Recall that $s_N \geq N^{1/4}.$ Averaging over the $s_N$ hypotheses and using that $\log s_N\geq \tfrac 14 \log N,$ we find
\begin{align*}
    \frac 1{s_N} \sum_{j=1}^{s_N} \KL(\P_{f_j}||\P_{f_0}) \leq \frac{\log N}{36}\leq \frac {\log s_N}9.
\end{align*}
Therefore, the conditions of Theorem 2.5 in \cite{nonparest} are satisfied with $M=s_N,$ $s=1/12,$ and $d(\wh \theta,\theta)=\|(\wh f-f)/t_{n,m}\|_\infty.$ From the conclusion of this theorem, the claim follows. 
\end{proof}

\begin{proof}[\textbf{Proof of Theorem 7}]
The theorem can be viewed as a special case of Theorem 15 by taking $m=0$ and $t_m^{\Q}(x)=\infty,$ such that $t_{n,m}(x)=t_n^{\P}(x)$ for all $x.$
\end{proof}

\begin{proof}[\textbf{Proof of Lemma 16}]
We write $\P=\P_{\sX}$ and $\Q=\Q_{\sX}.$ Suppose that 
\begin{align*}
    t_{m+n}^{\wt \P}(x) > t_m^{\Q}(x)\sqrt{\frac{\log(m+n)}{\log m}} > t_m^{\Q}(x),
\end{align*} 
then
\begin{align*}
    \frac{\log(n+m)}{\log m}\frac{\log m}{m} &= \frac{\log(n+m)}{\log m}t_m^{\Q}(x)^2 \Q([x \pm t_n^{\Q}(x)])\\
    &< \frac{n + m}{m}\bigg[\frac{t_{m+n}^{\wt \P}(x)^2}{n+m}\Big( n\P\big([x \pm t_{m+n}^{\wt \P}(x)]\big) + m\Q\big([x \pm t_{n+m}^{\wt \P}(x)]\big)\Big)\bigg] \\
    &= \frac{\log(n+m)}{m}
\end{align*}
which is a contradiction. Therefore $t_{m+n}^{\wt \P}(x) \leq t_m^{\Q}(x)\sqrt{\frac{\log(m+n)}{\log m}}.$ By symmetry, one concludes that $t_{m+n}^{\wt \P}(x) \leq \sqrt{\tfrac{\log(m+n)}{\log n}}t_n^{\P}(x)$. This proves $t_{n + m}^{\wt \P}(x) \leq t_m^{\Q}(x)\sqrt{\tfrac{\log(m+n)}{\log m}}\wedge t_n^{\P}(x)\sqrt{\tfrac{\log(m+n)}{\log n}}$.

To prove the second part of the claim, we show that there are positive constants $C''$ and $M,$ such that $t_{m+n}^{\wt P}(x)\leq C''(t_n^{\P}(x)\wedge t_m^{\Q}(x))$ for all $n\geq m \geq M.$  Since $\log(n),\log(m)>1$ for $n,m>2,$ the result holds then with $C'=C''\vee \sqrt{\log(2M)}$ for all $n\geq m >2.$ 

Observe that if $t_m^{\Q}(x)\sqrt{\log(m+n)/\log m}\leq t_n^{\P}(x)\sqrt{\log(m+n)/\log n},$ then,
\begin{align*}
    \sqrt{\frac{\log m}{m}}
    \leq t_m^{\Q}(x)
    \leq \sqrt{\frac{\log m}{\log n}}
    t_n^{\P}(x)
    \leq 
    \sqrt{\frac{\log m}{\log n}} Cn^{-\kappa},
\end{align*}
where the first inequality follows from the definition of the spread function, which implies $t_m^{\Q}(x)^2\geq t_m^{\Q}(x)^2\Q([x\pm t_m^{\Q}(x)])=\log m/m.$ The last display can be rewritten into the form $n^{2\kappa}\log n\leq Cm,$ implying that $\log m=\log\log n +2\kappa \log n-\log C.$ Therefore, we have that, for sufficiently large $M,C'',$ and all $n\geq m\geq M,$ $\sqrt{\log(n+m)/\log(m)} \vee \sqrt{\log(n+m)/\log(m)}\leq \sqrt{\log(2n)/\log(m)}\leq C''.$ Together with the first inequality of the lemma, we then have 
\begin{align*}
    t_{n + m}^{\wt \P}(x) 
    \leq t_m^{\Q}(x)\sqrt{\tfrac{\log(m+n)}{\log m}}\wedge t_n^{\P}(x)\sqrt{\tfrac{\log(m+n)}{\log n}}
    \leq C''
    \big(t_m^{\Q}(x) \wedge t_n^{\P}(x)\big),
\end{align*}
for all $n\geq m \geq M.$ 
\end{proof}

\begin{proof}[\textbf{Proof of Lemma 17}] 
Using Theorem 14, we have that 
\begin{align*}
        \int_0^1 \big(\wh f_{n,m}(x)-f_0(x)\big)^2 \, dx\lesssim \int_0^1 \big(t_n^{\P}(x) \wedge t_m^{\Q}(x)\big)^2 \, dx
\end{align*}
with probability tending to one as $n$ and $m$ tend to infinity. Let $a_n=(\log n/n2^{\alpha + 1})^{1/(\alpha + 3)}.$ Rewriting the condition $n^{3/(3+\alpha)}\log^{\alpha/(3+\alpha)} n \ll m$ shows that $(m/n)^{1/\alpha}>a_n$ for all sufficiently large $n.$ Using Lemma 10, Lemma 11 and $m\leq n,$ we find that
\begin{align*}
    \int_0^1 \big(t_n^{\P}(x) \wedge t_m^{\Q}(x)\big)^2 \, dx
    &\lesssim 
    \int_0^{a_n} \!\!\Big(\frac{\log n}{n}\Big)^{\frac{2}{\alpha+3}}\!\!\wedge  \Big(\frac{\log m}{m}\Big)^{\frac 23} \, dx
    +
    \!\!\int_{a_n}^1 \!\Big(\frac{\log n}{n x^\alpha}\Big)^{\frac{2}{3}}\!\wedge  \Big(\frac{\log m}{m}\Big)^{\frac 23} \, dx \\
    &\leq 
    \Big(\frac mn\Big)^{1/\alpha}
    \Big(\frac{\log m}{m}\Big)^{\frac 23}
    +\int_{(m/n)^{1/\alpha}}^{1} 
    \Big(\frac{\log n}{n x^\alpha}\Big)^{\frac{2}{3}} \, dx\\
    &\leq 
    \Big(\frac mn\Big)^{1/\alpha}
    \Big(\frac{\log m}{m}\Big)^{\frac 23}
    +\Big(\frac{\log n}{n}\Big)^{2/3}\frac{1}{2\alpha/3-1}\Big(\frac{m}{n}\Big)^{1/\alpha-2/3}\\
    &\leq \Big(1+\frac{1}{2\alpha/3-1}\Big)
    \Big(\frac mn\Big)^{1/\alpha}
    \Big(\frac{\log n}{m}\Big)^{\frac 23}.
\end{align*}
\end{proof}

\section{Proofs for Section 6}
\label{app:adapted_wright}
\begin{proof}[\textbf{Proof of Theorem 18}] Set $T_n(x_0) := (\tfrac{\alpha + 1}{A})^{1/(2\alpha + 1)} (np(x_0))^{\alpha/(2\alpha + 1)}(\wh f_n(x_0) - f(x_0))$. Let $\varepsilon_n$ be such that $n^{-1/2} \ll \varepsilon_n \ll n^{-1/(2\alpha + 1)}$. If $F$ denotes the c.d.f.\ of the design distribution and $F_n$ denotes the empirical c.d.f.\ $F_n(x)=\tfrac 1n\sum_{i=1}^n \mathds{1}(X_i\leq t).$ We have that
\begin{align*}
    \P\Big(T_n(x_0) \leq x\Big) = &\P\Big(T_n(x_0)\leq x \cap \sup_{y \in \RR} |F_n(y) - F(y)| \geq \varepsilon_n\Big)\\
    &+ \P\Big(T_n(x_0) \leq x \cap \sup_{y \in \RR} |F_n(y) - F(y)| < \varepsilon_n\Big).
\end{align*}
Using the Dvoretzky-Kiefer-Wolfowitz (DKW) inequality \cite{MR1062069} yields
\begin{align}
\label{eq.DKW}
\begin{split}
    \P\Big(T_n(x_0) \leq x \cap \sup_{y \in \RR} |F_n(y) - F(y)| \geq \varepsilon_n\Big) &\leq \P\Big(\sup_{y \in \RR} |F_n(y) - F(y)| \geq \varepsilon_n\Big)\\
    &\leq 2e^{-2n\varepsilon_n^2} \underset{n \to \infty}{\longrightarrow} 0.
\end{split}    
\end{align}
We now check that all the assumptions of Theorem 1 in \cite{wright_isotonic_1981} are verified on $\{\sup_{y \in \RR} |F_n(y) - F(y)| < \varepsilon_n\}$. Since $\varepsilon_n \ll n^{-1/(2\alpha + 1)},$ we have that $\sup_{y \in \RR}|F_n(y) - F(y)| \ll n^{-1/(2\alpha + 1)}$. By definition of the c.d.f., we also have $F'(x_0) = p(x_0) > 0.$ Applying the setting of \cite{wright_isotonic_1981} to our case gives $\phi = 1$ while $w \equiv 1$ and $Y_i - f_0(X_i) = \varepsilon_i \sim N(0, 1)$ for all $1 \leq i \leq n$. Therefore, for all $0 \leq j_n < k_n \leq n, \ ((k_n - j_n))^{-1/2}\sum_{i = j_n}^{k_n} (Y_i - f_0(X_i)) \sim \mN(0, 1).$ Finally, let $Z$ be a random variable with Chernoff distribution. By applying Theorem 1 of \cite{wright_isotonic_1981}, we obtain
\begin{align*}
    \P\Big(T_n(x_0) \leq x \big| \sup_{y \in \RR} |F_n(y) - F(y)| < \varepsilon_n\Big) \underset{n \to \infty}{\longrightarrow} \P(Z \leq x).
\end{align*}
Hence, applying DKW again yields $\P(\sup_{y \in \RR} |F_n(y) - F(y)| < \varepsilon_n) \geq 1 - 2\exp(-2n\varepsilon_n^2) \to 1$ as $n \to \infty,$ and
\begin{align}
    \label{eq.DKW_wright}
    \P\Big(T_n(x_0) \leq x \cap \sup_{y \in \RR} |F_n(y) - F(y)| < \varepsilon_n\Big) \underset{n \to \infty}{\longrightarrow} \P(Z \leq x).
\end{align}
Combining inequalities \eqref{eq.DKW} and \eqref{eq.DKW_wright} gives $T_n(x_0) \overset{d}{\longrightarrow} Z$.
\end{proof}

Recall that $\rho_\eta(\P_\sX, \Q_\sX) := \int_0^1 \P_\sX([x \pm \eta])^{-1}\, d\Q_\sX(x),$ and that for any $\gamma\geq 1,$ and any $C > 0$, the class $\mS$ is defined as the set of all pairs $(\P_\sX, \Q_\sX),$ such that $\sup_{\eta \in (0, 1]} \eta^\gamma \rho_\eta(\P_\sX, \Q_\sX) \leq C$.

\begin{lemma}
\label{lem.increasing_distrib_in_specific_space}
    If $\alpha \geq 1, \ \P_\sX$ is the distribution with Lebesgue density $p \colon x \mapsto (\alpha + 1)x^{\alpha}\mathds{1}(x \in [0, 1]),$ and $\Q_\sX$ is the uniform distribution on $[0, 1],$ then there exists a constant $0 < C <\infty,$ such that for any $\varepsilon \in (0, \alpha),$ $$(\P_\sX, \Q_\sX) \in \mS(\alpha, C)\setminus \mS(\alpha - \varepsilon, C).$$
\end{lemma}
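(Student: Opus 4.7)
The plan is to exploit that $\Q_\sX$ is uniform, which reduces the quantity to $\rho_\eta(\P_\sX,\Q_\sX) = \int_0^1 \P_\sX([x \pm \eta])^{-1}\,dx$, and that the CDF of $\P_\sX$ is $F(y) = y^{\alpha+1}$ on $[0,1]$, so $\P_\sX([x \pm \eta]) = \min(x+\eta,1)^{\alpha+1} - \max(x-\eta,0)^{\alpha+1}$. I will split the domain of integration into the three natural pieces $[0,\eta]$, $[\eta,1-\eta]$, $[1-\eta,1]$ according to whether the interval is truncated on the left, untruncated, or truncated on the right, and estimate each piece to show $\rho_\eta \asymp \eta^{-\alpha}$ uniformly in $\eta \in (0,1]$ (up to logarithmic corrections at the boundary value $\alpha = 1$).

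The leftmost piece gives the exact value $\int_0^\eta (x+\eta)^{-(\alpha+1)}\,dx = \alpha^{-1}(1 - 2^{-\alpha})\eta^{-\alpha}$, which is already of the exact order $\eta^{-\alpha}$; this also furnishes the lower bound needed for the exclusion part of the lemma. The rightmost piece is bounded by a constant independent of $\eta$: on $[1-\eta, 1]$ the elementary inequality $1 - (x-\eta)^{\alpha+1} \geq (\alpha+1)(1 - x + \eta) \geq (\alpha+1)\eta$ reduces its contribution to $O(1)$. For the middle piece I will use that $y \mapsto y^\alpha$ is convex for $\alpha \geq 1$: the midpoint inequality gives $(x+\eta)^{\alpha+1} - (x-\eta)^{\alpha+1} = (\alpha+1)\int_{x-\eta}^{x+\eta} y^\alpha\,dy \geq 2(\alpha+1)\eta x^\alpha$, while monotonicity yields the matching upper bound $\leq 2(\alpha+1)\eta(x+\eta)^\alpha \leq 2^{\alpha+1}(\alpha+1)\eta x^\alpha$ on $[\eta, 1-\eta]$. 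Hence the middle piece is of order $\eta^{-1}\int_\eta^{1-\eta} x^{-\alpha}\,dx$, which is of order $\eta^{-\alpha}$ when $\alpha > 1$.

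Assembling the three contributions yields $c_1 \eta^{-\alpha} \leq \rho_\eta \leq c_2 \eta^{-\alpha}$ for constants depending only on $\alpha$, so $\sup_{\eta \in (0,1]} \eta^\alpha \rho_\eta \leq C$ and $(\P_\sX, \Q_\sX) \in \mS(\alpha, C)$. For the exclusion, the leftmost-piece lower bound already gives $\rho_\eta \geq c_1\eta^{-\alpha}$, and therefore $\eta^{\alpha-\varepsilon}\rho_\eta \geq c_1 \eta^{-\varepsilon} \to \infty$ as $\eta \to 0^+$, so the supremum in the definition of $\mS(\alpha-\varepsilon, C)$ is infinite and no finite $C$ can contain the pair.

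The main obstacle will be the boundary case $\alpha = 1$, where $\int_\eta^{1-\eta} x^{-1}\,dx = \log((1-\eta)/\eta)$ introduces a logarithmic factor in the middle piece and $\eta\rho_\eta$ grows like $|\log \eta|$ rather than being uniformly bounded. To handle this cleanly I would separate the range of $\eta$: for $\eta$ bounded away from $0$ a direct constant bound suffices, while for very small $\eta$ the contribution of the middle piece can be bypassed by using the crude bound $\P_\sX([x\pm\eta]) \geq \P_\sX([0,\eta]) = \eta^{\alpha+1}$ together with a more careful partitioning of $[\eta, 1-\eta]$ adapted to the logarithmic divergence; the remaining pieces already give $\rho_\eta \asymp \eta^{-\alpha}$ with no logarithmic inflation, so the $\alpha = 1$ case reduces to controlling the single log-divergent middle term.
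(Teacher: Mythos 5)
Your decomposition of $[0,1]$ into $[0,\eta]$, $[\eta,1-\eta]$, $[1-\eta,1]$ and the treatment of the three pieces is the same approach the paper takes, and for $\alpha > 1$ your argument is correct (your convexity/midpoint bound for the middle piece is an equivalent, slightly cleaner substitute for the paper's Bernoulli-inequality bound, and your exact evaluation of the leftmost piece serves the exclusion step exactly as the paper's does). So for $\alpha > 1$ the proposal matches the paper in substance.

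Where you genuinely diverge from the paper is $\alpha = 1$, and here you deserve credit for noticing something the paper glosses over: the middle piece is, for $\alpha=1$, exactly $\int_\eta^{1-\eta}(4x\eta)^{-1}\,dx = \frac{1}{4\eta}\log\frac{1-\eta}{\eta}$, so $\eta\,\rho_\eta(\P_\sX,\Q_\sX) \gtrsim \log(1/\eta)\to\infty$ and the displayed claim "in any case, $\eta^\alpha f(\eta)\leq C$" in the paper's proof is simply false at $\alpha=1$. There is no slack to recover: the logarithmic divergence is built into the exact value of the integral, not an artifact of a loose bound, so the lemma as stated cannot hold at $\alpha=1$.

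Your proposed repair, however, does not work. The crude bound $\P_\sX([x\pm\eta])\geq \P_\sX([0,\eta]) = \eta^{\alpha+1}$ is \emph{weaker} on $[\eta,1-\eta]$ than the convexity bound you already have (since $x\geq\eta$ gives $x^\alpha\eta \geq \eta^{\alpha+1}$), so replacing $x^\alpha\eta$ by $\eta^{\alpha+1}$ only inflates the middle piece from order $\eta^{-\alpha}\log(1/\eta)$ to order $\eta^{-(\alpha+1)}$. The talk of "a more careful partitioning adapted to the logarithmic divergence" is not a proof step; there is no partitioning that turns a genuinely divergent integral into a bounded one. The honest resolution is either to restrict the lemma to $\alpha>1$, or to observe that for $\alpha=1$ one only gets $(\P_\sX,\Q_\sX)\in\mS(\alpha',C)$ for every $\alpha'>1$ (while $\notin\mS(1,C)$), which is a weaker but true statement. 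You should state this clearly rather than suggesting a fix that makes the estimate worse.
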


\begin{proof}
    Consider the function $f\colon \eta \mapsto \int_0^1 \P_\sX([x \pm \eta])^{-1}\, dx$. It can be checked that $f$ is decreasing and continuous. Consequently, to prove that $\sup_{\eta \in (0, 1]} \eta^\alpha f(\eta)$ is bounded, it is sufficient to prove that $\lim_{\eta \to 0} \eta^{\alpha}f(\eta) < \infty.$ For $0 < \eta < 1/10,$
    \begin{align*}
        f(\eta) = \int_0^\eta \P_\sX([x \pm \eta])^{-1} \, dx + \int_\eta^{1-\eta} \P_\sX([x \pm \eta])^{-1}\, dx + \int_{1-\eta}^1 \P_\sX([x \pm \eta])^{-1} \, dx.
    \end{align*}
    We now bound the three terms above. We have
    \begin{align*}
        \int_0^\eta \P_\sX([x \pm \eta])^{-1} \ dx &= \int_0^\eta \frac 1{(x + \eta)^{\alpha + 1}}\, dx \leq \frac 1{\alpha \eta^{\alpha}},\\
        \int_{1-\eta}^1 \P_\sX([x \pm \eta])^{-1}\, dx &= \int_{1- \eta}^1 \frac 1{1 - (x - \eta)^{\alpha + 1}}\, dx \leq \int_{1-\eta}^1 \frac 1{1 - x + \eta} \, dx = \log(2).
    \end{align*}
    For $x \in (\eta, 1- \eta),$ we have, by Bernoulli's inequality and the fact that $\alpha + 1 > 1,\ \P_\sX([x \pm \eta]) = x^{\alpha + 1}[(1 + \eta/x)^{\alpha + 1} - (1 - \eta/x)^{\alpha + 1}] \geq x^{\alpha}(\alpha + 2)\eta.$ Therefore,
    \begin{align*}
        \int_\eta^{1- \eta} \P_\sX([x \pm \eta])^{-1}\, dx \leq 
        \begin{cases}
        \frac1{(\alpha+2)(\alpha - 1)\eta^\alpha} &\text{if $\alpha > 1$},\\
        \frac1{(\alpha+2)\eta}\log\big(\frac 1{\eta}\big) &\text{if $\alpha = 1,$ and}\\
        \frac1{(\alpha+2)\eta} &\text{if $\alpha < 1$.}
        \end{cases}
    \end{align*}
    In any case, $\eta^{\alpha}f(\eta) \leq C$ for some $C > 0$. This proves 
    that $\sup_{\eta \in (0, 1]} \eta^\alpha f(\eta)$ is bounded and $(\P_\sX, \Q_\sX) \in \mS(\alpha, C).$ 
    
    For $0 < \varepsilon < \alpha$ and $0 < \eta \leq 1/2,$
    \begin{align*}
        \eta^\varepsilon f(\eta)=\int_0^1 \frac{\eta^{\varepsilon}}{\P_\sX([x \pm \eta])}\, dx \geq \int_0^\eta \frac{\eta^{\varepsilon}}{\P_\sX([0,x +\eta])} \, dx = \frac 1{2^{\alpha}\alpha} \eta^{\varepsilon - \alpha} \underset{\eta \to 0}{\longrightarrow} \infty,
    \end{align*}
    proving that if $\varepsilon \in (0, \alpha),$ then $(\P_\sX, \Q_\sX) \in \mS(\alpha, C) \setminus \mS(\alpha - \varepsilon, C).$ This concludes the proof.
\end{proof}

\begin{proof}[Proof of Lemma 19]
In a first step of the proof, we show that there exists a LSE over the $(1-\delta)$-Lipschitz functions that are piecewise linear with at most $n-1$ linear pieces. Suppose this is wrong and let $\wh f_n$ be a LSE over all $\Lip(1-\delta)$ functions. Write $\wt f_n$ for the function interpolating the values $(X_i,\wh f_n(X_i)).$ This is a piecewise linear function with at most $n-1$ pieces, and the Lipschitz constant is not larger than the Lipschitz constant of $\wh f_n$ implying $\wt f_n \in \Lip(1-\delta).$ By construction, $\wt f_n$ has the same loss as the LSE $\wh f_n$. Consequently, $\wt f_n$ is also a LSE. This contradiction proves that a piecewise linear LSE $f_n^L$ with $n-1$ pieces exists.

A piecewise linear function with $m$ pieces can be exactly represented by a shallow network $f(x)=\sum_{i=1}^N a_i(w_i x-v_i)_+,$ whenever $N\geq m.$ This proves that the piecewise linear LSE $\wh f_n^L$ is in $\ReLU_N(1-\delta)$ for $N\geq n-1.$ Let us introduce the shorthand notation $L(f):=\sum_{i=1}^n \big(Y_i-f(X_i)\big)^2.$ Since, by construction, $\ReLU_N(1-\delta)\subset \Lip(1-\delta),$ we have, for any minimizer $\wh f_n\in \argmin_{f\in\ReLU_N(1-\delta)} L(f),$ that
\begin{align*}
    \min_{f\in\Lip(1-\delta)} L(f)
    \leq L(\wh f_n)\leq L(\wh f_n^L)=\min_{f\in\Lip(1-\delta)} L(f).
\end{align*}
Thus, all $\leq$ can be replaced by $=$ in the last display. This proves that $\wh f_n$ is a LSE for the function class $\Lip(1-\delta).$
\end{proof}
\end{appendix}

\end{document}